\documentclass[11pt]{amsart}
\usepackage{amsmath,amssymb,amsthm,mathrsfs,mathtools}
\usepackage{booktabs,enumitem}
\usepackage{tikz}
\usetikzlibrary{arrows.meta,backgrounds,positioning}
\usepackage[margin=1.15in]{geometry}
\usepackage[hypertexnames=false,colorlinks=true,breaklinks=true,bookmarks=false,urlcolor=blue,citecolor=blue,linkcolor=blue,bookmarksopen=false,draft=false]{hyperref}

\makeatletter\renewcommand*{\top}{%
{\mathpalette\@transpose{}}%
}
\newcommand*{\@transpose}[2]{%
\scriptsize
\raisebox{\depth}{$\m@th#1\mathsf{T}$}%
}
\makeatother

\newtheorem{theorem}{Theorem}[section]
\newtheorem{lemma}[theorem]{Lemma}
\newtheorem{proposition}[theorem]{Proposition}
\newtheorem{corollary}[theorem]{Corollary}
\theoremstyle{definition}
\newtheorem{definition}[theorem]{Definition}

\theoremstyle{remark}
\newtheorem{remark}[theorem]{Remark}
\newtheorem*{remark*}{Remark}

\newcommand{\PP}{\mathscr{P}}
\newcommand{\QQ}{\mathscr{Q}}
\newcommand{\vol}{\operatorname{vol}}
\newcommand{\OC}{\mathrm{OC}}
\newcommand{\clo}{c_{\mathrm{lo}}}
\newcommand{\chii}{c_{\mathrm{hi}}}
\newcommand{\den}{\operatorname{den}}
\newcommand{\R}{\mathbb{R}}

\begin{document}

\title[Volumes of Boolean quadric polytopes of series-parallel graphs]{Exact volume computation for Boolean quadric polytopes of series-parallel graphs}

\author{Jon Lee}
\address{University of Michigan, Ann Arbor, MI 48109, USA}
\email{jonxlee@umich.edu}
\urladdr{\href{https://orcid.org/0000-0002-8190-1091}{https://orcid.org/0000-0002-8190-1091}}

\subjclass[2020]{52B11, 90C27, 68R10, 05C85}
\keywords{Boolean quadric polytope, correlation polytope, cut polytope, volume, series-parallel graph, odd-cycle inequalities, transfer operator}

\begin{abstract}
For a graph $G$, the Boolean quadric polytope $\PP(G)$ is the convex hull of the binary
solutions of $y_{ij}=x_ix_j$ for $ij\in E(G)$, and $\QQ(G)$ is its standard linear relaxation. For series-parallel graphs, $\QQ(G)$ together with the so-called odd-cycle
inequalities describes $\PP(G)$. 
Lee and Skipper described how $\vol\QQ(G)$ can be computed in polynomial time for graphs of bounded treewidth and gave a closed formula for $\vol\PP(G)$ when $G$ is a cycle. 
We resolve their open question of giving an efficient algorithm to compute $\vol\PP(G)$ for series-parallel
graphs. Letting $d=|V(G)|+|E(G)|$, for a series-parallel graph $G$ we can compute $\vol\PP(G)$ with $O(d^7)$ arithmetic
operations, and with $O(d^5)$ operations if $G$ is a cactus, on rationals of bit length
$O(d^2)$.

The algorithm is a dynamic program over the series-parallel decomposition.
The same framework computes $\vol\QQ(G)$ (and hence the number of linear extensions of
the incidence poset of $G$), with $O(d^4)$ arithmetic operations, and with $O(d^3)$ operations if
$G$ is a cactus, on rationals of the same bit length. With every vertex coordinate $x_v$ fixed at
$\tfrac12$, where the recursion reduces to convolutions of univariate polynomials, it also
computes the volume of the cut polytope of every series-parallel graph with $m$ edges, with
$O(m^3)$ arithmetic operations.

We also prove results on how much of $\QQ(G)$ the polytope $\PP(G)$ occupies. First, short odd
cycles provably matter more than long ones: in every graph, the odd-cycle inequalities of a cycle
of length $\ell$ cut off at most a fraction $2^{\ell-1}/\ell!$ of $\QQ(G)$, and this bound is sharp. Hence
$\vol\PP(G)/\vol\QQ(G)\ge1-\sum_C2^{|C|-1}/|C|!$ for every series-parallel graph, where $C$ ranges
over the cycles of $G$. The ratio
$\vol\PP/\vol\QQ$ does not factor over cycles that share a vertex: in a flower of $k$ copies of a
cycle $C_\ell$ (the $k$ copies share one vertex and are otherwise disjoint) it decays like $\rho_\ell^k$, where $\rho_\ell$ is an explicit rational number
below the ratio of $C_\ell$ itself, and a triangle glued to a long cycle keeps, in the limit, a
fraction $\pi^2(\pi^4+45)/(\pi^6+480)$ of its own ratio. Nevertheless, the small fractions that many long cycles cut off compound: in the worst case, the
triangle inequalities, or the odd-cycle inequalities of cycles up to any fixed length, close no
fixed fraction of the gap between $\QQ(G)$ and $\PP(G)$, and the volume ratio can be
exponentially small in $d$.
\end{abstract}

\maketitle

% ---------------------------------------------------------------------------
\section{Introduction}

Let $G=(V,E)$ be a simple graph with $n=|V|$ and $m=|E|$. The (graphical) \emph{Boolean
quadric polytope} $\PP(G)\subset\R^{V\cup E}$ is the convex hull of the vectors $(x,y)$ with
$x\in\{0,1\}^V$ and $y_{ij}=x_ix_j$ for $ij\in E$. Its standard relaxation $\QQ(G)$ is the
solution set of
\begin{equation}\label{eq:Q}
y_{ij}\ge0,\qquad y_{ij}\le x_i\,,\qquad y_{ij}\le x_j\,,\qquad x_i+x_j\le1+y_{ij}\qquad(ij\in E).
\end{equation}
These polytopes are central to quadratic $0/1$ optimization and, through a linear isomorphism,
to the cut polytope of the graph obtained from $G$ by adding a vertex adjacent to all others;
see~\cite{Padberg89,DeSimone90,DezaLaurent97}. Following~\cite{LeeMorris94}, we
regard volume as a natural measure of how well a relaxation approximates a polytope.
The volume $\vol\QQ(K_n)$, equivalently that of the rooted metric polytope, was computed
in~\cite{KLS97}. Sparse graphs were studied
in~\cite{LeeSkipper20}, whose authors
\begin{itemize}[nosep]
\item showed that $\vol\QQ(G)$ can be computed in polynomial time when $G$ has bounded
treewidth, via counting linear extensions (see~\cite{Stanley86,KHNK16});
\item gave closed forms for $\vol\QQ$ of stars, paths, and cycles, and for
$\vol\PP(C_m)$;
\item showed experimentally that odd-cycle inequalities of short cycles matter far more
than those of long cycles; we prove a strong form of this in Theorem~\ref{thm:cap}.
\end{itemize}
The main open problem of~\cite[Section~8]{LeeSkipper20} was to find a polynomial-time algorithm
for $\vol\PP(G)$ when $G$ is series-parallel, that is, when $G$ has no $K_4$ minor, with cactus
forests as a more manageable first case. Lee, Skipper, and Speakman restated this problem
as~\cite[Problem~20, ``Lobster Roll'']{LSS18}, noting that, beyond cycles, nothing was known about the complexity
of computing $\vol\PP(G)$ for graphs that are not forests. For series-parallel graphs,
\cite{Padberg89} showed that $\QQ(G)$ together with the odd-cycle inequalities
describes $\PP(G)$; however, there can be exponentially many such inequalities, and the
regions that they cut off overlap. The problem is not easy in general: by a theorem of~\cite{DittmerPak20}, counting linear extensions of incidence posets is \#P-complete, so even
computing $\vol\QQ(G)$ is \#P-hard for arbitrary graphs (see~\cite[Section~4]{LSS18}). We resolve the
problem for series-parallel graphs, and hence, in particular, for cactus forests: our main
result, Theorem~\ref{thm:main}, is an algorithm that computes $\vol\PP(G)$ exactly for every
series-parallel graph $G$, with a number of arithmetic operations polynomial in $d=n+m$, namely
$O(d^7)$ in general and $O(d^5)$ for cacti, on rationals of bit length $O(d^2)$.

\subsection*{Approach}
We think of volumes as masses of distributions. A point of $\QQ(G)$ consists of vertex coordinates
$x\in[0,1]^V$ and edge coordinates $y\in\R^E$, so the uniform distribution on $\QQ(G)$ makes $(x,y)$
a random point, and $\vol\PP(G)/\vol\QQ(G)$ is the probability that this point lies in
$\PP(G)$. Fixing the vertex coordinates $x$ leaves a \emph{fiber}, the set of admissible $y$. Its
volume, as a function of $x$, is the density of the vertex coordinates, up to normalization, and
the volume of the polytope is the integral of this density over $x$. The same holds for $\PP(G)$,
and for any piece of $G$ once the coordinates at its boundary are fixed. The algorithm works with
these unnormalized densities, which are the fiber volumes, rather than with the polytopes
themselves. The picture is especially simple in the cut coordinates $z_e=x_i+x_j-2y_e$\,, which at
the $0/1$ points of $\PP(G)$ equal $|x_i-x_j|$ and so record whether $e$ joins the two sides of
the cut $\{i:x_i=1\}$: once $x$ is fixed, each $z_e$ ranges over its own interval, independently of the others
(Lemma~\ref{lem:edge}), so the fiber of $\QQ(G)$ is a box. Inside the box, $\PP(G)$ is described by
lengths. Label each edge of a path or cycle \emph{even} or \emph{odd}; an even edge $e$ has length
$z_e$ and an odd one has length $1-z_e$, the labelled path or cycle has the sum as its length, and
the labelling is \emph{odd} if it has an odd number of odd edges. A point lies in $\PP(G)$ exactly
when every odd labelling of every cycle has length at least $1$. This is the parity shortest-path
view behind the separation algorithm of~\cite{BarahonaMahjoub86}.

A series-parallel graph is built from single edges by series and parallel compositions of
two-terminal pieces. For a piece $H$ with terminals $s$ and $t$, the rest of the graph sees $H$
only through $x_s$\,, $x_t$\,, and two numbers: the \emph{parity distances} $d_0(H)$ and $d_1(H)$,
the lengths of the shortest $s$--$t$ paths in $H$ with an even and with an odd labelling. So the
algorithm never enumerates cycles. For each
$(x_s,x_t)$, it records the volume of the feasible configurations of $H$, spread out according to
the values of $d_0(H)$ and $d_1(H)$, that is, an unnormalized distribution of these two lengths; we
call this record the \emph{state} of $H$. Two facts make states small. If $s$ and $t$ are joined by
an edge of $H$, then $d_0=z_{st}$ and $d_1=1-z_{st}$\,, so the state is a function of three
variables. Otherwise the mass of $H$ and the density of $d_0$ suffice, because a symmetry gives the
density of $d_1$ from that of $d_0$.

\emph{Series composition.} A series composition of pieces, joined end to end at shared vertices,
is a \emph{chain}, and the pieces are its \emph{segments}. An $s$--$t$ path through a chain passes
through its segments one after another, so its length is the sum of the lengths of its parts in
the segments. If the vertex coordinates inside the chain
were fixed, the segments would be independent, because they share no edge coordinates, and the
distribution of a sum of lengths would be the convolution of the distributions of the segments,
just as the density of a sum of independent random variables is the convolution of their
densities. The internal vertex coordinates are not fixed; each is shared by the two segments that
meet there, so a series step multiplies the states of the segments, convolves in the lengths, and
integrates over the shared vertex coordinates. Closing the chain by the edge $st$ creates cycles,
and the configurations that violate one of them are those on which some odd labelling of a new
cycle has length less than $1$; their volume is the value at $1$ of a cumulative
distribution of a sum, one for each odd labelling of the segments and the edge $st$. The $2^k$
odd labellings of a chain with $k$ segments give disjoint sets, and a symmetry reduces them to two (Theorem~\ref{thm:chain} and
Lemma~\ref{lem:chainswitch}). In rotated coordinates, the coupling through the shared vertex
coordinates becomes a box, and each step is a convolution in the plane
(Proposition~\ref{prop:box}).

\emph{Parallel composition.} Parallel branches share only $s$ and $t$, so, for fixed $(x_s,x_t)$,
their configurations are independent and their masses multiply. The new cycles run out along one
branch and back along another, and each parity distance of the composite is the smallest over
its branches. Feasibility compares the even distance of each branch with the odd distances of the
others,
so the new state is computed as for the minimum or maximum of independent random variables: sum,
over the branch that attains the extreme value, its density times the probability that the other
branches satisfy the constraints. This is a single integral in one length variable
(Theorem~\ref{thm:theta}).

\emph{Assembling the volume.} Blocks that meet at a cut vertex share only its coordinate, so,
for that coordinate fixed, they too are independent: their fiber volumes multiply, and the
product is integrated over the shared coordinate (Section~\ref{sec:cutvertex}). At the root, the
remaining variables are integrated out, which gives the volume. For $\QQ(G)$ there are no lengths,
and each step only multiplies and integrates over vertex coordinates, a continuous analogue of
multiplying transfer matrices; this recursion uses $O(d^4)$ arithmetic operations, and $O(d^3)$
for cacti, compared with $O(d^6)$ for the general bounded-treewidth algorithm that Lee and Skipper
applied, and it counts the linear extensions of the incidence poset of $G$
(Section~\ref{sec:volQ}). If instead every vertex coordinate is fixed at $x_v=\tfrac12$ and not
integrated out, the segments are independent, every step is a convolution of univariate
polynomials, and the recursion computes the volume of the cut polytope of $G$
(Section~\ref{sec:cutG}).

What makes all this efficient is Theorem~\ref{thm:allpoly}: every state is a \emph{single}
polynomial on a fixed domain, so the pieces of a piecewise-polynomial description never
accumulate as the recursion proceeds, and each step is ordinary polynomial arithmetic. For a
piece whose terminals are joined by an edge, the domain is the simplex $\Delta$ of
Lemma~\ref{lem:edge}; the state is the fiber function of a $0/1$ polytope whose vertices map onto
the vertices of $\Delta$, so it can change form only across the facets of $\Delta$. For a
density, the only nonpolynomial ingredient of the integrals is an integration limit, which
disappears when we differentiate. Together with a bound on bit lengths
(Proposition~\ref{prop:bits}), this yields Theorem~\ref{thm:main}.

\subsection*{What the volumes say}
The algorithm makes $\vol\PP(G)/\vol\QQ(G)$ computable, and the coordinates behind it, cut
coordinates and fibers over single vertices, also yield exact statements about this ratio
(Section~\ref{sec:comp}). The main one
confirms, in a strong form, the heuristic of Lee and Skipper that odd-cycle inequalities matter
far more for short cycles than for long ones: \emph{in every graph, the odd-cycle inequalities of a
cycle of length $\ell$ cut off at most a fraction $2^{\ell-1}/\ell!$ of $\QQ(G)$}, and the bound is
sharp, in the limit, for a cycle with
many pendant edges (Theorem~\ref{thm:cap}). Adding these bounds over the cycles gives
$\vol\PP(G)/\vol\QQ(G)\ge1-\sum_C2^{|C|-1}/|C|!$ for every series-parallel graph. A triangle hanging from a vertex of any graph always cuts
off at least $1/3$, as much as any $4$-cycle can, and $5/2$ times as much as any longer cycle can
(Corollary~\ref{cor:pendanttri}). More generally, a cycle of length $\ell$ hanging from a vertex of a
series-parallel graph matters, up to explicit constant factors, exactly as much as it does on its
own, so its contribution falls like $\pi^{2\ell}/(2\ell)!$ (Theorem~\ref{thm:hangcyc}). For a single cycle the ratio is known in closed form, and for
disjoint cycles the ratios multiply. Once cycles
share a vertex, they do not. The fiber marginals of $\PP$ and $\QQ$ at a vertex are unimodal
(Lemma~\ref{lem:unimodal}), and in a flower of $k$ copies of $C_\ell$\,,
the ratio is $\rho_\ell^k$ up to a factor linear in $k$, with $\rho_\ell=1-\binom{2\ell-2}{\ell-1}/A_{2\ell-1}$\,, where
$A_k$ counts alternating permutations; $\rho_\ell$ is strictly less than the ratio of $C_\ell$
alone (Theorem~\ref{thm:bigflower}, Proposition~\ref{prop:rhocenter}). A long cycle glued to a
triangle loses almost nothing itself, yet it multiplies the triangle's ratio by a factor tending
to $\pi^2(\pi^4+45)/(\pi^6+480)\approx0.9751$ (Theorem~\ref{thm:longcycle}). The reason is that,
as the cycle grows, the distribution that it induces on the shared coordinate $x_v$ converges to
the density $2\sin^2(\pi x_v)$, which is concentrated near $x_v=\tfrac12$, exactly where the
odd-cycle inequalities of the triangle cut off most of their volume; we obtain this limit from a
spectral formula for the fibers of $\QQ$ of a cycle over one vertex.
Finally, the triangle inequalities close all of the gap between $\QQ(G)$ and $\PP(G)$ exactly
when every chordless cycle is a triangle (Proposition~\ref{prop:chordless}), and, because the small fractions that many long cycles cut off
compound, in the worst case they close no fixed fraction of it: on flowers of $4$-cycles, $\vol\PP$ is a fraction $(0.989\ldots)^{d+o(d)}$ of the volume of the
triangle relaxation, and the same holds, with other constants, for the odd-cycle inequalities of
cycles up to any fixed length (Theorem~\ref{thm:worst}).

\subsection*{Notation}
For a graph $G$, we write $V(G)$ and $E(G)$ for its vertex and edge sets, and $ij$ for the edge
$\{i,j\}$. We write $K_n$ for the complete graph on $n$ vertices, $C_m$ for the cycle with $m$
edges, $K_{a,b}$ for the complete bipartite graph, and $G-e$ for $G$ with the edge $e$ deleted.
Throughout, $d=|V|+|E|$, and $\vol$ denotes $d$-dimensional Lebesgue volume; we define the
variant $\vol_z$ in Section~\ref{sec:prelim}. We use $\subset$ for (not necessarily proper)
inclusion of sets. We write $|A|$ for the cardinality of a set $A$ and $A\triangle B$ for the
symmetric difference of sets $A$ and $B$. For a cycle $C$ and a vertex $v$ of $C$, $\delta_C(v)$
denotes the set of the two edges of $C$ at $v$. For points $p,p'\in\R^2$, $p\le p'$ means
$p_1\le p'_1$ and $p_2\le p'_2$\,. For a polytope $K$, $\operatorname{vert}K$ denotes its vertex
set, and $K$ is a \emph{$0/1$ polytope} if every vertex is a $0/1$ vector. A \emph{pendant edge} is an edge with an end of degree $1$. A \emph{cactus} is a connected graph in which every edge lies in at most one cycle, and a
\emph{cactus forest} is a graph whose components are cacti. Finally, $\log$
denotes the logarithm to base $2$, and $\lfloor\cdot\rfloor$ denotes the floor function.

\subsection*{Organization}
In Section~\ref{sec:prelim}, we introduce cut coordinates, in which $\QQ(G)$ becomes a product of
intervals once the vertex coordinates are fixed and each odd-cycle inequality becomes a lower
bound on the length of an odd labelling of a cycle; we also recall series-parallel decompositions and fiber functions.
Sections~\ref{sec:states}--\ref{sec:poly} treat a $2$-connected series-parallel graph, one node of
its decomposition at a time. In Section~\ref{sec:states}, we define parity distances and the
states that the recursion passes up the decomposition. In Section~\ref{sec:chain}, we close a
chain by an edge, and in Section~\ref{sec:theta}, we combine parallel chains into a theta.
In Section~\ref{sec:poly}, we prove that every state is a single polynomial on a fixed domain;
this is what keeps the computation polynomial. In Section~\ref{sec:algo}, we handle cut vertices
through the block-cut tree, bound the number of operations and the bit lengths, and prove the
main result, Theorem~\ref{thm:main}; we then derive the simpler recursion for $\vol\QQ(G)$,
work through an example, and, by fixing every vertex coordinate at $x_v=\tfrac12$, compute the
volume of the cut polytope of every series-parallel graph. Section~\ref{sec:comp}, on the ratio $\vol\PP(G)/\vol\QQ(G)$, depends
on the earlier sections only through cut coordinates, Theorem~\ref{thm:chain}, and the fibers
over a single vertex of Section~\ref{sec:cutvertex}. It opens with the bound on what the
odd-cycle inequalities of one cycle can cut off (Theorem~\ref{thm:cap}) and then studies
hanging cycles, a long cycle glued to a fixed graph, large flowers, and relaxations that use
only the odd-cycle inequalities of short cycles. In Section~\ref{sec:concl}, we state open
questions.

% ---------------------------------------------------------------------------
\section{Preliminaries}\label{sec:prelim}

As in~\cite[Lemma~1]{LeeSkipper20}, each coordinate $y_{ij}$ with $ij\notin E$ contributes a unit
factor, so we regard $\PP(G)$ and $\QQ(G)$ as full-dimensional polytopes in $\R^{V\cup E}$, of
dimension $d=n+m$.

\subsection{Cut coordinates}
We put $u_i=x_i-\tfrac12$ and $z_e=x_i+x_j-2y_e$ for $e=ij$. The map $(x,y)\mapsto(x,z)$ is
affine with $|\det|=2^m$. Hence, if $\vol_z$ denotes volume in the coordinates $(x,z)$, then
\begin{equation}\label{eq:jac}
\vol X=2^{-m}\vol_z X\qquad(X\in\{\PP(G),\QQ(G)\}).
\end{equation}

\begin{lemma}\label{lem:edge}
For an edge $e=ij$, the constraints~\eqref{eq:Q} are equivalent to
$|u_i-u_j|\le z_e\le 1-|u_i+u_j|$. In particular, $|u_i|,|u_j|\le\tfrac12$\,, and the
admissible $z_e$-interval has length $1-2\max(|u_i|,|u_j|)$.
\end{lemma}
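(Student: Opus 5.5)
The plan is to rewrite each of the four inequalities in~\eqref{eq:Q} in the coordinates $(u,z)$ and then pair them up. From $z_e=x_i+x_j-2y_e$ we get $y_e=\tfrac12(x_i+x_j-z_e)$. Substituting $x_i=u_i+\tfrac12$ and $x_j=u_j+\tfrac12$, this reads $y_e=\tfrac12(u_i+u_j+1-z_e)$.

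We substitute into each constraint in turn.
\begin{itemize}[nosep]
\item $y_e\ge0$ becomes $z_e\le 1+u_i+u_j$.
\item $x_i+x_j\le 1+y_e$ becomes $u_i+u_j+1\le 1+\tfrac12(u_i+u_j+1-z_e)$, that is, $z_e\le 1-(u_i+u_j)$.
\item $y_e\le x_i$ becomes $\tfrac12(u_i+u_j+1-z_e)\le u_i+\tfrac12$, that is, $z_e\ge u_j-u_i$.
\item By symmetry, $y_e\le x_j$ becomes $z_e\ge u_i-u_j$.
\end{itemize}
The first two constraints together say $z_e\le 1-|u_i+u_j|$. The last two together say $z_e\ge|u_i-u_j|$. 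This proves the equivalence, and each step is reversible because the substitution is an affine bijection.

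For the consequences, note that the interval is nonempty exactly when $|u_i-u_j|+|u_i+u_j|\le1$. We use the identity $|a-b|+|a+b|=2\max(|a|,|b|)$, which can be checked by cases on the signs, or by noting that $\max(|a+b|,|a-b|)=|a|+|b|$ and $\min(|a+b|,|a-b|)=\bigl||a|-|b|\bigr|$. The nonemptiness condition is therefore $2\max(|u_i|,|u_j|)\le1$, which gives $|u_i|,|u_j|\le\tfrac12$. The same identity also shows that the length of the interval, $1-|u_i+u_j|-|u_i-u_j|$, equals $1-2\max(|u_i|,|u_j|)$.

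The argument has no real obstacle. The only points needing care are the sign bookkeeping in the substitution and a clean justification of the absolute-value identity.
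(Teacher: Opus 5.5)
Your proposal is correct and follows the paper's proof, which likewise substitutes into each of the four inequalities of~\eqref{eq:Q} and pairs them into $|u_i-u_j|\le z_e\le1-|u_i+u_j|$. The paper leaves the ``in particular'' consequences implicit; your justification through the identity $|a-b|+|a+b|=2\max(|a|,|b|)$ supplies that step.
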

\begin{proof}
We translate each inequality:
\begin{itemize}[nosep]
\item $y_e\ge0$ becomes $z_e\le1+u_i+u_j$\,;
\item $x_i+x_j\le1+y_e$ becomes $z_e\le1-u_i-u_j$\,;
\item $y_e\le x_i$ and $y_e\le x_j$ become $z_e\ge\pm(u_i-u_j)$. \qedhere
\end{itemize}
\end{proof}

\noindent So, once the vertex coordinates $u$ are fixed, $\QQ(G)$ is a product of intervals, one
for each edge. This product structure underlies the results of Section~\ref{sec:comp}, in
particular Theorem~\ref{thm:cap} and Lemma~\ref{lem:cycmarg}.

For a cycle $C$ of $G$ and an odd subset $A\subset E(C)$, the odd-cycle inequality
$\OC(A)$ of~\cite{Padberg89} reads
\[
\sum_{i\in S_2}x_i-\sum_{i\in S_0}x_i+\sum_{e\in E(C)\setminus A}y_e-\sum_{e\in A}y_e\le\bigl\lfloor|A|/2\bigr\rfloor,
\]
where $S_0$ and $S_2$ are the sets of vertices of $C$ incident with zero and two edges of $A$,
respectively.

\begin{lemma}\label{lem:oc}
The inequality $\OC(A)$ is equivalent to
$\sum_{e\in A}(1-z_e)+\sum_{e\in E(C)\setminus A}z_e\ge1$.
\end{lemma}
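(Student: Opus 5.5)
We substitute $y_e=(x_i+x_j-z_e)/2$ for every $e=ij\in E(C)$ into $\OC(A)$ and collect terms vertex by vertex. The inequality is linear, so this is an equivalence: multiplying by $2$ preserves the direction, and the substitution is invertible.

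For a vertex $i$ of $C$, let $a_i\in\{0,1,2\}$ be the number of edges of $A$ in $\delta_C(i)$. The $y$-terms contribute
\[
\tfrac12\Bigl(\sum_{e\in E(C)\setminus A}(x_i+x_j)-\sum_{e\in A}(x_i+x_j)\Bigr)-\tfrac12\Bigl(\sum_{e\in E(C)\setminus A}z_e-\sum_{e\in A}z_e\Bigr).
\]
Since $i$ lies on exactly two edges of $C$, the coefficient of $x_i$ in the first bracket is $\tfrac12\bigl((2-a_i)-a_i\bigr)=1-a_i$. This equals $1$ for $i\in S_0$, $0$ when $a_i=1$, and $-1$ for $i\in S_2$.

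The explicit vertex terms of $\OC(A)$ add $+1$ for $S_2$ and $-1$ for $S_0$. These cancel the contributions above exactly, so every $x_i$ disappears. This cancellation is the only point that needs care; the rest is bookkeeping.

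What remains is
\[
\tfrac12\sum_{e\in A}z_e-\tfrac12\sum_{e\in E(C)\setminus A}z_e\le\bigl\lfloor|A|/2\bigr\rfloor=\tfrac{|A|-1}{2},
\]
using that $|A|$ is odd. Multiplying by $-2$ gives
\[
\sum_{e\in E(C)\setminus A}z_e-\sum_{e\in A}z_e\ge1-|A|,
\]
and adding $|A|=\sum_{e\in A}1$ to both sides yields
\[
\sum_{e\in A}(1-z_e)+\sum_{e\in E(C)\setminus A}z_e\ge1,
\]
which is the claimed form.
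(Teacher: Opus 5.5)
Your proof is correct and is essentially the paper's argument. The paper only runs the substitution in the opposite direction: it expands $\sum_{e\in A}(1-z_e)+\sum_{e\in E(C)\setminus A}z_e$ in the coordinates $(x,y)$, uses the same count $\deg_A(v)$ versus $2-\deg_A(v)$ to see that the coefficient of $x_v$ is $2$, $0$, or $-2$, and then finishes with $\lfloor|A|/2\rfloor=(|A|-1)/2$.
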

\begin{proof}
We write $\deg_A(v)$ for the number of edges of $A$ at $v$. Substituting $z_e$\,, we find that
the left-hand side equals
\[
|A|+2\sum_{S_0}x_v-2\sum_{S_2}x_v+2\sum_Ay_e-2\sum_{E(C)\setminus A}y_e\,,
\]
because $\sum_{e\in A}(x_i+x_j)=\sum_v\deg_A(v)x_v$ and
$\sum_{e\in E(C)\setminus A}(x_i+x_j)=\sum_{v\in V(C)}(2-\deg_A(v))x_v$\,. Because $|A|$ is odd,
$(|A|-1)/2=\lfloor|A|/2\rfloor$, and the claim follows.
\end{proof}

Thus, with \emph{even length} $z_e$ and \emph{odd length} $1-z_e$ on each edge, a point of
$\QQ(G)$ violates an odd-cycle inequality if and only if some cycle has an odd labelling, that is,
one with an odd number of odd edges, of total length less than $1$. (These lengths are real
numbers; when we speak of a cycle of length $\ell$, we mean, as usual, one with $\ell$ edges.)

\begin{lemma}[switching]\label{lem:switch}
For $v\in V$, define $\sigma_v$ by $u_v\mapsto-u_v$ and $z_{vw}\mapsto1-z_{vw}$ for all
$vw\in E$, with all other coordinates fixed. Then:
\begin{itemize}[nosep]
\item $\sigma_v$ is a volume-preserving affine involution;
\item $\sigma_v$ maps $\QQ(G)$ onto $\QQ(G)$ and $\PP(G)$ onto $\PP(G)$;
\item $\sigma_v$ maps $\OC(A)$ onto $\OC(A\triangle\delta_C(v))$ for every cycle $C$ through
$v$.
\end{itemize}
\end{lemma}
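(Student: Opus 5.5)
The plan is to prove the three items directly in the cut coordinates $(u,z)$; by \eqref{eq:jac} these differ from $(x,y)$ by a fixed affine map, so statements about volume and about images of polytopes carry over.

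\emph{First item.} The map $\sigma_v$ acts coordinatewise. It negates one coordinate, sends $z_{vw}\mapsto 1-z_{vw}$ for each edge $vw$ at $v$, and fixes every other coordinate. Its linear part is therefore diagonal with entries $\pm1$, so $|\det|=1$ and $\sigma_v$ preserves $\vol_z$, hence also $\vol$. Applying it twice returns every coordinate to its original value, so it is an involution.

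\emph{Second item, for $\QQ(G)$.} I will use Lemma~\ref{lem:edge}. For an edge $e=ij$ with $v\notin\{i,j\}$, nothing changes. For $e=vw$, the constraint is $|u_v-u_w|\le z_e\le 1-|u_v+u_w|$. Under $\sigma_v$ it becomes
\[
|{-u_v}-u_w|\le 1-z_e\le 1-|{-u_v}+u_w|,
\]
which is equivalent to $|u_v-u_w|\le z_e\le 1-|u_v+u_w|$. So each defining inequality is carried to the same system, and $\sigma_v(\QQ(G))=\QQ(G)$, using that $\sigma_v$ is an involution.

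\emph{Second item, for $\PP(G)$.} It suffices to show that $\sigma_v$ permutes the vertices of $\PP(G)$, since an affine bijection maps convex hulls to convex hulls. At a $0/1$ point we have $u_v=\pm\tfrac12$ and $z_{vw}=|x_v-x_w|$. Flipping $x_v\mapsto 1-x_v$ gives $u_v\mapsto -u_v$. It also gives $|x_v-x_w|\mapsto|1-x_v-x_w|=1-|x_v-x_w|$, because both $x_v$ and $x_w$ lie in $\{0,1\}$. So $\sigma_v$ is exactly complementation of $x_v$, which maps binary solutions to binary solutions.

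\emph{Third item.} I will use Lemma~\ref{lem:oc}: $\OC(A)$ says that the labelled length
\[
\sum_{e\in A}(1-z_e)+\sum_{e\in E(C)\setminus A}z_e
\]
is at least $1$. Replacing $z_e$ by $1-z_e$ for the two edges of $\delta_C(v)$ swaps the even and odd length on exactly those edges. The other edges of $C$ do not contain $v$, and the $u$ coordinates do not appear in the inequality. So the image is the same expression with $A$ replaced by $A\triangle\delta_C(v)$. This set is still odd, because $|\delta_C(v)|=2$, so it is again a valid odd-cycle inequality.

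The only step needing care is the $\PP(G)$ case. I expect it to be routine once the map is written as complementation of $x_v$. As an alternative for series-parallel graphs, one could combine the $\QQ(G)$ case with the third item and Padberg's description of $\PP(G)$. The vertex argument is preferable because it holds for every graph.
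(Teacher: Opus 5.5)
Your proof is correct and follows the paper's route. The paper likewise notes that $\sigma_v$ is, in the original coordinates, the complementation $x_v\mapsto1-x_v$, $y_{vw}\mapsto x_w-y_{vw}$, which preserves the binary solutions and hence $\PP(G)$, and it takes the remaining claims directly from Lemmas~\ref{lem:edge} and~\ref{lem:oc}; you just write out the determinant, involution, and constraint-by-constraint checks that the paper calls immediate.
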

\begin{proof}
In the original coordinates, $\sigma_v$ is the map $x_v\mapsto1-x_v$ and
$y_{vw}\mapsto x_w-y_{vw}$\,. This map preserves integrality and the relations $y_{vw}=x_vx_w$\,,
so it preserves $\PP(G)$. The remaining claims follow immediately from Lemmas~\ref{lem:edge}
and~\ref{lem:oc}.
\end{proof}

\begin{figure}[ht]
\centering
\tikzset{
  v/.style={circle,fill,inner sep=1.3pt},
  cut/.style={circle,draw,very thick,fill=white,inner sep=1.8pt},
  lab/.style={font=\small},
  root/.style={very thick,red!75!black},
  odd/.style={very thick,red!75!black,dashed},
  seg/.style={line width=2.2pt,blue!55!black},
  tn/.style={draw,rounded corners=2pt,inner sep=2pt,font=\scriptsize,fill=white},
  bn/.style={draw,rounded corners=2pt,inner sep=3pt,font=\small,fill=white}
}
\begin{minipage}[b]{0.52\textwidth}\centering
\begin{tikzpicture}[scale=1.0]
  \coordinate (S) at (0,0); \coordinate (A) at (3,0);
  \coordinate (M1) at (1.5,1.1); \coordinate (M2) at (1.5,0.35); \coordinate (T) at (1.5,-1.1);
  \coordinate (C1) at (4.2,0.75); \coordinate (C2) at (4.2,-0.75);
  \begin{scope}[on background layer]
    \fill[blue!8,rounded corners=10pt] (-0.35,-0.05) -- (1.5,1.45) -- (3.35,-0.05) -- (1.5,0.05) -- cycle;
    \fill[green!10,rounded corners=8pt] (2.75,0) -- (4.45,1.0) -- (4.45,-1.0) -- cycle;
  \end{scope}
  \draw (S) -- (M1) -- (A); \draw (S) -- (M2) -- (A); \draw (A) -- (T);
  \draw (A) -- (C1) -- (C2) -- (A);
  \draw[root] (S) -- (T);
  \foreach \p/\n/\pos in {S/s/left,M1/m_1/above,M2/m_2/below,T/t/below,C1/c_1/right,C2/c_2/right}
    \node[v,label={[lab]\pos:$\n$}] at (\p) {};
  \node[cut,label={[lab]above:$a$}] at (A) {};
  \node[lab,blue!55!black] at (2.45,0.95) {$H$};
  \node[lab,green!45!black] at (3.85,0) {$T$};
\end{tikzpicture}\\[2pt]
{\small (a) $G$: $K_{2,3}$ and a triangle $T$ glued at $a$}
\end{minipage}\hfill
\begin{minipage}[b]{0.44\textwidth}\centering
\begin{tikzpicture}[node distance=7mm]
  \node[bn,circle,inner sep=2pt] (rho) {$s$};
  \node[bn,below=of rho,text=blue!55!black] (B) {$B=K_{2,3}$};
  \node[bn,circle,inner sep=2pt,below=of B] (a) {$a$};
  \node[bn,below=of a,text=green!45!black] (T) {$T$};
  \draw (rho) -- (B) -- (a) -- (T);
  \node[lab,right=2mm of rho] {root $v^\ast$};
  \node[lab,right=2mm of a,align=left] {$\omega_a=\Phi_T$};
\end{tikzpicture}\\[2pt]
{\small (b) block-cut tree rooted at $s$}
\end{minipage}

\vspace{10pt}
\begin{minipage}[b]{0.56\textwidth}\centering
\begin{tikzpicture}[level distance=9mm,
    level 1/.style={sibling distance=26mm},
    level 2/.style={sibling distance=24mm},
    level 3/.style={sibling distance=20mm}]
  \node[tn] {P: root}
    child {node[tn,text=red!75!black] {edge $st$}}
    child {node[tn] {chain $s,a,t$; weight $\omega_a$ at $a$}
      child {node[tn,text=blue!55!black] {P: theta $H$}
        child {node[tn] {chain $s,m_1,a$}}
        child {node[tn] {chain $s,m_2,a$}}}
      child {node[tn] {edge $at$}}};
\end{tikzpicture}\\[2pt]
{\small (c) decomposition of $B$ rooted at $st$}
\end{minipage}\hfill
\caption{The running example $G$: $K_{2,3}$ with a triangle $T$ glued at the cut vertex $a$
(Section~\ref{sec:sprunning}). The open circle marks the cut vertex, and the shading marks the
two blocks. Section~\ref{sec:example} carries the example through the algorithm.}
\label{fig:running}
\end{figure}

\subsection{Series-parallel graphs}\label{sec:sprunning}
A connected graph is \emph{series-parallel} if it has no $K_4$ minor. A \emph{block} of a graph is
a maximal subgraph without a cut vertex; every block of a series-parallel graph is a single edge or
a 2-connected series-parallel graph, and the blocks meet only at cut vertices. The
\emph{block-cut tree} of a connected graph has a node for each block and for each cut vertex, and
joins each cut vertex to the blocks that contain it.

A \emph{two-terminal} graph is a graph with two distinguished vertices $s$ and $t$, its
\emph{terminals}. Two-terminal series-parallel graphs are built from single edges by two
operations: a \emph{series} composition identifies the terminal $t$ of one graph with the terminal
$s$ of the next, and a \emph{parallel} composition identifies the two terminals of several graphs
with each other. For any edge $st$ of a 2-connected series-parallel graph $B$, the graph $B$ arises
in this way with terminals $s$ and $t$, and a decomposition can be computed in linear
time; see~\cite{VTL82}.

We use the decomposition in the following canonical form, in which the two operations alternate:
\begin{itemize}[nosep]
\item a \emph{P-node} is a parallel composition. Its children are \emph{chains} and, in addition,
at most one copy of the edge $st$ joining its terminals.
\item a \emph{chain} is a series composition $s=v_0,v_1,\dots,v_k=t$ with $k\ge2$. Its
children, called \emph{segments}, are single edges and P-nodes; the vertices
$v_1,\dots,v_{k-1}$ are its \emph{internal} vertices.
\end{itemize}
A P-node is \emph{edge-closed} if it contains the edge $st$ joining its terminals, and a
\emph{theta} otherwise. The name follows the usual theta graph, three internally disjoint paths
between the same two vertices, but we also allow two chains. So the simplest theta is a
$4$-cycle with its terminals opposite; with three chains, the simplest is $K_{2,3}$\,, a theta
graph in the usual sense. The distinction matters
throughout, because in an edge-closed piece the edge $st$ pins down both parity distances
(Lemma~\ref{lem:parity}(c)), whereas in a theta nothing does.

\emph{A running example.} Figure~\ref{fig:running} shows the graph $G$ that we use as an example
in this paper. Its two sides are $\{s,a\}$ and $\{m_1,m_2,t\}$, forming $K_{2,3}$\,, and a triangle
$T$ with vertices $a$, $c_1$\,, and $c_2$ is glued at $a$; here $n=7$, $m=9$, and $d=16$. The graph
has two blocks, $B=K_{2,3}$ and $T$, which meet at the cut vertex $a$, so its block-cut tree is a
path (Figure~\ref{fig:running}(b)); we root it at $v^\ast=s$. Rooting the decomposition of $B$ at
the edge $st$ (Figure~\ref{fig:running}(c)) gives a root P-node with two children: the edge $st$, so this P-node is edge-closed;
and the chain $s,a,t$, whose internal vertex is $a$. That chain has two segments: the edge $at$,
and the theta $H$ between $s$ and $a$, whose own chains are the paths $s,m_1,a$ and $s,m_2,a$. We
carry this example through the algorithm in Section~\ref{sec:example}.

\begin{theorem}[see~\cite{Padberg89}]\label{thm:padberg}
If $G$ is series-parallel, then $\PP(G)$ is the set of points of $\QQ(G)$ that satisfy all
odd-cycle inequalities.
\end{theorem}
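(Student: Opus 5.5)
We prove the nontrivial inclusion: a point of $\QQ(G)$ that satisfies every odd-cycle inequality lies in $\PP(G)$. The reverse inclusion is immediate, because each $0/1$ point with $y_{ij}=x_ix_j$ satisfies~\eqref{eq:Q}. It also satisfies every $\OC(A)$: in cut coordinates its $z_e$ are $0/1$ and record the cut edges of $\{i:x_i=1\}$, and every cycle meets a cut in an even number of edges. An odd set $A$ therefore cannot coincide with the cut edges on $C$, so some summand in Lemma~\ref{lem:oc} equals $1$.

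For the main inclusion I would pass to the cut polytope. Let $\nabla G$ be $G$ with an apex vertex $r$ joined to every vertex. The linear isomorphism mentioned in the introduction sends $\PP(G)$ to the cut polytope of $\nabla G$: take $z_{ri}=x_i$ and $z_{ij}=x_i+x_j-2y_{ij}$. Under it, the inequalities~\eqref{eq:Q} are exactly the triangle inequalities of the triangles $r,i,j$. By Lemma~\ref{lem:oc}, each $\OC(A)$ becomes the cycle inequality $\sum_{e\in F}(1-z_e)+\sum_{e\in C\setminus F}z_e\ge1$ with $|F|$ odd, for the cycle $C$ of $G$. The statement then reduces to a cut-polytope fact: when $G$ has no $K_4$ minor, the cut polytope of $\nabla G$ is cut out by the cycle inequalities of the cycles of $G$ together with the triangles through $r$. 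Cycles of $\nabla G$ through $r$ need no separate treatment, because they are implied by these.

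Rather than invoke the Barahona--Mahjoub theorem for graphs without $K_5$ minors ($\nabla G$ has none), I would argue directly by induction on the series-parallel structure. Gluing along a vertex or along an edge preserves the description: the clique-sum property of cut polytopes (equivalently, of $\PP$) over a $K_1$ or $K_2$ lets us merge convex combinations that agree on the shared coordinates. So it suffices to treat 2-connected $G$ and to reduce it by the usual operations: deleting a parallel edge, and contracting or suppressing a degree-$2$ vertex.

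The step I expect to be the main obstacle is the suppression step. Suppose $v$ has degree $2$ with neighbours $a$ and $b$, and $G'$ is obtained by replacing the path $a,v,b$ with a single edge. Given a feasible point $(x,z)$ on $G$, I would define $z'_{ab}$ in $G'$ as a parity shortest-path value through $v$. The check is that the constraints~\eqref{eq:Q} for $ab$ hold, and that the cycle inequalities of $G'$ follow from those of $G$. Then, by induction, the point of $G'$ is a convex combination of $0/1$ points. Finally I must lift that combination back to $G$, distributing $x_v$, $z_{av}$ and $z_{vb}$ consistently. This works because the fiber over $(x_a,x_b,z'_{ab})$ of the polytope for the triangle-like gadget is exactly the polytope of a small graph, which can be verified by hand. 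If the lifting becomes messy, I would fall back on citing Barahona's theorem that the cycle inequalities describe the cut polytope of $K_5$-minor-free graphs, together with the translation above.
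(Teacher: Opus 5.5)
Your translation to $\mathrm{Cut}(\nabla G)$ is sound. The inequalities~\eqref{eq:Q} are the triangle inequalities at $r$, cycles through $r$ are implied through the chords $rv_i$, and a $K_5$ minor of $\nabla G$ would leave a $K_4$ minor in $G$. So your fallback, citing Barahona--Mahjoub, is a complete proof at the level of the paper, which simply cites Padberg.

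The direct induction, however, has a genuine gap at the step you flagged. The problem is not the lifting but the choice of $z'_{ab}$. Write $d_0=\min(z_{av}+z_{vb},\,2-z_{av}-z_{vb})$ and $1-d_1=|z_{av}-z_{vb}|$ for the parity distances of the path $a,v,b$, and write $D_0,D_1$ for those of the $a$--$b$ paths in $G-v$. Three conditions constrain $z'_{ab}$:
\begin{itemize}[nosep]
\item the lifting needs the triangle formed by $av$, $vb$ and the new edge to lie in $\PP(K_3)$, which forces $1-d_1\le z'_{ab}\le d_0$;
\item the odd-cycle inequalities of $G'$ through the new edge force $1-D_1\le z'_{ab}\le D_0$;
\item \eqref{eq:Q} for $ab$ forces $\clo\le z'_{ab}\le\chii$.
\end{itemize}
A value computed from the path through $v$ alone cannot meet the second condition in general. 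Take the $4$-cycle $a,v,b,w$ with $u\equiv0$ and $z_{av}=z_{vb}=\tfrac12$. For $(z_{wa},z_{bw})=(0,0)$, $(\tfrac12,0)$ and $(0,1)$ the point is feasible, because the edges $av$ and $vb$ alone contribute $1$ to every labelling. Yet the triangle $a,b,w$ of $G'$ forces $z'_{ab}=0$, $\tfrac12$ and $1$, respectively. So no value depending only on $z_{av},z_{vb}$ works for all three cases, and in the middle case both $d_0=1$ and $1-d_1=0$ fail. This is exactly the phenomenon the paper's recursion is built on. A piece without its terminal edge is seen by the rest of the graph through two numbers $d_0,d_1$. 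One edge value can encode both, independently of the rest, only when $d_0+d_1=1$, as in Lemma~\ref{lem:parity}(c).

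The repair is to choose $z'_{ab}$ in the intersection of the three intervals, which uses the rest of the graph. By the one-dimensional Helly property it suffices that every left endpoint is at most every right endpoint:
\begin{itemize}[nosep]
\item $d_0+d_1\ge1$ and $D_0+D_1\ge1$ hold by Lemma~\ref{lem:parity}(b);
\item the comparisons with $\clo$ and $\chii$ hold by Lemma~\ref{lem:parity}(a), together with $\clo\le\chii$;
\item the cross inequalities $d_0+D_1\ge1$ and $D_0+d_1\ge1$ are exactly the odd-cycle inequalities of $G$ for the cycles through $v$.
\end{itemize}
The last item is where those inequalities are used. In the language of $\nabla G$, you take $z'_{ab}$ between $1-d_1$ and $d_0$ computed over all $a$--$b$ paths of $\nabla G$; the path $a,r,b$ supplies the bounds $\clo$ and $\chii$.

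With this choice your plan goes through. The point is feasible for $G'$ and lies in $\PP(G')$ by induction. It merges with the point of $\PP(K_3)$ on $a,v,b$ because $\PP(K_2)$, in $(x_a,x_b,y_{ab})$, is a simplex, so agreement of coordinates forces agreement of the weights on the shared $0/1$ patterns. Mere agreement of coordinates would not suffice otherwise. This simplex property is also what justifies your $K_1$ and $K_2$ merging. It handles the case $ab\in E(G)$ directly, as an edge-sum of $G-v$ and a triangle, with no new value needed.
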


\noindent The odd-cycle inequalities of a cycle with a chord are implied by those of the two
shorter cycles through the chord (proof of Proposition~\ref{prop:chordless}), so the chordless
cycles suffice.

\subsection{Fiber functions}
Let $K\subset\R^N$ be a polytope, and let $\pi\colon\R^N\to\R^r$ be a linear map with
$\dim\pi(K)=r$. The \emph{fiber function} $y\mapsto\vol_{N-r}(K\cap\pi^{-1}(y))$ is the
density of the push-forward of Lebesgue measure on $K$: its integral over a set $Y\subset\R^r$ is
$\vol(K\cap\pi^{-1}(Y))$. In probabilistic language, it is $\vol K$ times the probability density
of $\pi(X)$ for a point $X$ drawn uniformly from $K$.

\begin{lemma}\label{lem:pushforward}
The fiber function is a polynomial of degree at most $N-r$ on each connected component of the
complement, in $\pi(K)$, of the union of all hyperplanes spanned by $r$ affinely independent
points of $\pi(\operatorname{vert}K)$.
\end{lemma}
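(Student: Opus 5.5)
This is the classical fact that the push-forward of Lebesgue measure on a polytope is piecewise polynomial, with breaks only along images of lower-dimensional faces. We reduce to simplices by triangulating, and then handle a simplex by a direct formula.

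\emph{Step 1: triangulate.} Triangulate $K$ without new vertices, so that each maximal simplex $S$ has vertices in $\operatorname{vert}K$. The fiber function of $K$ is the sum of the fiber functions of the simplices $S$, since the simplices overlap only in measure zero sets and hence also in almost every fiber. It therefore suffices to show that the fiber function of each $N$-simplex $S$ is a polynomial of degree at most $N-r$ on each component $U$ of the complement described in the statement. If a simplex is lower dimensional, its contribution is zero, so we may assume every $S$ is full dimensional.

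\emph{Step 2: one simplex.} Let $S=\operatorname{conv}(p_0,\dots,p_N)$ and $q_k=\pi(p_k)$. Write a point of $S$ in barycentric coordinates $\lambda\in\Delta_N$. The fiber over $y$ is then, up to the constant Jacobian $N!\vol S$, the set
\[
\{\lambda\in\Delta_N : \textstyle\sum_k\lambda_kq_k=y\}.
\]
Its $(N-r)$-volume is the classical multivariate B-spline (simplex spline) with knots $q_0,\dots,q_N$, up to a constant.

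We then use the known fact that this B-spline is a polynomial of degree $N-r$ on each region of $\R^r$ cut out by the hyperplanes spanned by $r$ of the knots. These regions are exactly the cells of the hyperplane arrangement generated by $r$-subsets of $\{q_k\}$ in general position.

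To keep the argument self-contained, I would prove this directly. Fix $y$ in an open cell $U$. Then $y$ lies on no hyperplane spanned by $r$ of the $q_k$, so the affine subspace $\{\lambda:\sum\lambda_kq_k=y,\ \sum\lambda_k=1\}$ meets each face of $\Delta_N$ transversally or not at all. Consequently the combinatorial type of the fiber polytope, meaning which faces of $\Delta_N$ it meets, is constant on $U$.

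On a set of constant combinatorial type, each vertex of the fiber is the intersection of the affine subspace with an $r$-face of $\Delta_N$, namely $\operatorname{conv}(p_{k_0},\dots,p_{k_r})$. Its barycentric coordinates are affine functions of $y$, obtained by inverting the fixed matrix of the $q_{k_j}$. Triangulate the fiber by a combinatorially fixed triangulation. Each simplex then has volume equal to a determinant of $N-r$ differences of points affine in $y$, which is a polynomial of degree at most $N-r$. Summing over the simplices gives the claim on $U$.

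Finally, since the $q_k$ lie in $\pi(\operatorname{vert}K)$, every hyperplane of the arrangement for $S$ is also one of the hyperplanes in the statement, except when the chosen $r$ points are affinely dependent, and those contribute no wall. So each component in the statement lies inside a single cell for every $S$.

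\emph{Main obstacle.} The delicate point is the constancy of the combinatorial type. The relevant walls are $y\in\pi(F)$ for faces $F$ of $\Delta_N$ of dimension at most $r-1$. Each $\pi(F)$ lies in the affine span of at most $r$ points $q_k$, and is therefore contained in one of the listed hyperplanes, or in a lower-dimensional flat that we enlarge to a listed hyperplane by adding further points. This enlargement requires that at least $r$ affinely independent points exist, which holds because $\dim\pi(K)=r$.

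Off these walls, the fiber varies without any vertex appearing or disappearing, so the affine vertex parametrization above is valid. Making this transversality argument precise is the step I expect to require the most care.
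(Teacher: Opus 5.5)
Your proposal is correct and follows the paper's proof. Like the paper, you triangulate $K$ without new vertices, identify the push-forward of each simplex with a simplex spline whose knots lie in $\pi(\operatorname{vert}K)$, use that such a spline is a polynomial of degree at most $N-r$ off the hyperplanes spanned by $r$ of its knots, and sum over the simplices.

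Your self-contained sketch of the simplex-spline fact goes beyond what the paper writes, which simply cites de Boor and Micchelli. In that sketch you show that the fiber has constant combinatorial type off the walls, that its vertices depend affinely on $y$, and that a combinatorially fixed triangulation of it can be used. The sketch is sound once you note that each simplex determinant keeps a constant sign on the connected cell, so its absolute value is still a polynomial.
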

\begin{proof}
We triangulate $K$ using only its vertices. For each simplex, the push-forward is a
multivariate B-spline, also called a simplex spline; see~\cite{deBoor76,Micchelli80}. Such a function is a
polynomial of degree at most $N-r$ off the hyperplanes spanned by $r$-subsets of its knots,
and the knots are the images of the vertices of the simplex. Summing over the simplices, we
obtain the claim.
\end{proof}

% ---------------------------------------------------------------------------
\section{Parity distances and states}\label{sec:states}

Let $H$ be a two-terminal piece with terminals $s$ and $t$. We call a point of $\QQ(H)$ a
\emph{configuration} of $H$, and we call it \emph{feasible} if it satisfies all odd-cycle
inequalities of $H$. For a feasible configuration, we define
\begin{itemize}[nosep]
\item $d_0(H)$: the minimum length of an $s$--$t$ path with an even labelling, that is, an even
number of odd edges;
\item $d_1(H)$: the same, with an odd number of odd edges.
\end{itemize}
Throughout, we write
\[
\clo=|u_s-u_t|,\qquad \chii=1-|u_s+u_t|,\qquad
\Delta=\{(u_s,u_t,z):\clo\le z\le\chii\}.
\]
The set $\Delta$ is a tetrahedron: it is the image of $\PP(K_2)$, and its vertices are the
images of the four $0/1$ points.

\begin{lemma}\label{lem:parity}
For every feasible configuration, the following hold.
\begin{enumerate}[label=\textup{(\alph*)},nosep]
\item $d_0(H)\ge\clo$ and $d_1(H)\ge|u_s+u_t|=1-\chii$\,.
\item $d_0(H)+d_1(H)\ge1$.
\item If $st\in E(H)$, then $d_0(H)=z_{st}$ and $d_1(H)=1-z_{st}$\,.
\end{enumerate}
\end{lemma}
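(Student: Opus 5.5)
Since $d_0(H)$ and $d_1(H)$ are minima over labelled $s$--$t$ paths, parts (a) and (b) reduce to statements about a single labelled path or about the cycle formed by two paths. Part (c) then follows from (a) and (b) once the edge $st$ supplies one upper bound for each parity.

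For (a), fix an $s$--$t$ path $s=w_0,w_1,\dots,w_r=t$ with a labelling. By Lemma~\ref{lem:edge}, for each edge $e=w_{i-1}w_i$ we have $z_e\ge|u_{w_{i-1}}-u_{w_i}|$ and $1-z_e\ge|u_{w_{i-1}}+u_{w_i}|$. Define a sign sequence by $\epsilon_0=1$, with $\epsilon_i=\epsilon_{i-1}$ across an even edge and $\epsilon_i=-\epsilon_{i-1}$ across an odd edge. In either case the length of edge $w_{i-1}w_i$ is at least $|\epsilon_{i-1}u_{w_{i-1}}-\epsilon_iu_{w_i}|$. Summing and applying the triangle inequality gives a path length of at least $|u_s-\epsilon_ru_t|$. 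Here $\epsilon_r=1$ for an even labelling and $\epsilon_r=-1$ for an odd one, which yields $d_0\ge\clo$ and $d_1\ge|u_s+u_t|$.

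For (b), take a shortest even-labelled path $P_0$ and a shortest odd-labelled path $P_1$. Their union is a closed walk carrying an odd number of odd edges, counted with multiplicity. Feasibility, via Lemma~\ref{lem:oc}, controls only odd labellings of cycles, so this is the step that needs care. I would decompose the closed walk, viewed as an edge multiset, into cycles together with doubled edges. An edge used twice with the same label contributes the same parity twice, which is even. If it is used once with each label, it contributes $z_e+(1-z_e)=1\ge1$, and we are done at once. So we may discard edges traversed twice with matching labels and keep an odd total parity. The remaining edge set is a union of edge-disjoint cycles, each with an induced labelling, and its total parity is odd, so at least one of these cycles has an odd labelling. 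By feasibility that cycle has length at least $1$. All other contributions are nonnegative, because $0\le z_e\le1$ by Lemma~\ref{lem:edge}. Hence $d_0+d_1\ge1$.

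The main obstacle is making this walk-to-cycle decomposition precise, namely the symmetric-difference argument for shared edges and the splitting of an even-degree subgraph into cycles. I would handle it by working with the edge multiset, where degrees are even and the decomposition is standard, rather than with the paths themselves.

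For (c), the edge $st$ is itself a path with even length $z_{st}$ and odd length $1-z_{st}$. This gives $d_0\le z_{st}$ and $d_1\le1-z_{st}$. Part (b) gives $d_0+d_1\ge1$, so both inequalities are equalities.
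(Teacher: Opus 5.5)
Your proof is correct. For (a) and (b) it is the paper's argument: chain the signed inequalities of Lemma~\ref{lem:edge} along the path, and split the closed walk formed by a shortest even and a shortest odd path into cycles and doubled edges.

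Your version of (b) is more explicit than the paper's one-line decomposition. You first dispose of an edge traversed once with each label, and then decompose $E(P_0)\triangle E(P_1)$, which has even degrees and an odd number of odd edges, into edge-disjoint cycles. You also rightly state the nonnegativity of $z_e$ and $1-z_e$, which the paper uses tacitly.

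Part (c) is where you differ. You squeeze: $1\le d_0+d_1\le z_{st}+(1-z_{st})=1$. The paper instead argues directly. For every other $s$--$t$ path $\pi$, the odd-cycle inequalities of the cycle $\pi+st$, with $st$ given the opposite parity, bound the even and odd lengths of $\pi$ below by $z_{st}$ and $1-z_{st}$.

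Your route is shorter and reuses (b). The paper's route uses only the inequalities of the cycles through $st$, so it proves (c) under that weaker hypothesis. For the lemma as stated, which assumes full feasibility, both suffice.

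One small correction: your overview says (c) follows from (a) and (b), but your argument uses only (b).
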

\begin{proof}
(a) By Lemma~\ref{lem:edge}, $|u_i-\epsilon_eu_j|\le w_e$ for an edge $e=ij$ of length
$w_e$\,, where $\epsilon_e=+1$ if $e$ is labelled even and $\epsilon_e=-1$ if it is labelled odd.
We chain these inequalities along the path, using the triangle inequality.

(b) A shortest even path and a shortest odd path together form a closed walk with an odd number
of odd traversals. Such a walk decomposes into cycles and edges traversed twice. If some cycle
carries an odd number of odd traversals, then, by feasibility and Lemma~\ref{lem:oc}, its length
is at least $1$. Otherwise, the walk traverses some edge once with each label, for a length of
$z_e+(1-z_e)=1$.

(c) The edge $st$ yields $d_0\le z_{st}$ and $d_1\le1-z_{st}$\,. For any other $s$--$t$ path
$\pi$, the cycle $\pi+st$ yields $d_0(\pi)+(1-z_{st})\ge1$ and $d_1(\pi)+z_{st}\ge1$.
\end{proof}

\begin{definition}[states]\label{def:states}
We fix $(u_s,u_t)$ and consider the fiber of feasible configurations of $H$ over it, measured in
cut coordinates. Vertex weights (Section~\ref{sec:cutvertex}) may multiply this measure.
\begin{itemize}[nosep]
\item If $H$ is edge-closed, then $g_H(u_s,u_t,z)$ is the fiber function over
$(u_s,u_t,z_{st})\in\Delta$.
\item In general, the \emph{mass} of $H$ is the fiber function $G_H(u_s,u_t)$ over
$(u_s,u_t)$.
\item The \emph{densities} $m_0^H(u_s,u_t,w)$ and $m_1^H(u_s,u_t,w)$ are the densities of
$d_0(H)$ and $d_1(H)$ at $w$.
\end{itemize}
The \emph{state} of $H$ is what the recursion passes up: the function $g_H$ if $H$ is
edge-closed, and the mass $G_H$ together with the density $m_0^H$ otherwise.
\end{definition}

Here, as throughout, a \emph{density} is taken with respect to Lebesgue measure:
$m_0^H(u_s,u_t,w)$ is the derivative in $w$ of the fiber measure of the configurations with
$d_0(H)\le w$. These derivatives exist because $d_0$ and $d_1$ are piecewise-linear, nonconstant
functions of the configuration, so their level sets lie on finitely many hyperplanes. We do not
normalize the fiber measure to a probability measure; its total mass is $G_H$\,.

Switching at $s$ (Lemma~\ref{lem:switch}) exchanges $d_0$ and $d_1$\,, so
\begin{equation}\label{eq:m1}
m_1^H(u_s,u_t,w)=m_0^H(-u_s,u_t,w),
\end{equation}
and $G_H$ is even in each of $u_s$ and $u_t$\,. For an edge-closed piece, from
Lemma~\ref{lem:parity}(c), we have $m_0^H=g_H$ and $m_1^H(\cdot,w)=g_H(\cdot,1-w)$. For a single
edge, $g\equiv1$ on $\Delta$.

% ---------------------------------------------------------------------------
\section{Closing a chain}\label{sec:chain}

Let $\sigma_1,\dots,\sigma_k$ ($k\ge2$) be the segments of a chain $s=v_0,\dots,v_k=t$, and
suppose that we add a new edge $st$ with value $z=z_{st}$\,. For fixed $(u_s,u_t,z)\in\Delta$, the
configurations of the segments are independent given the values $u_{v_1},\dots,u_{v_{k-1}}$\,.
The cycles through $st$ consist of $st$ together with one $s$--$t$ path inside each segment.

An \emph{odd labelling} of the closed chain is a set $A\subset\{\sigma_1,\dots,\sigma_k,st\}$ of
odd cardinality, whose members are labelled odd and the others even. We put
\[
w_j=\begin{cases}d_0(\sigma_j),&\sigma_j\notin A,\\ d_1(\sigma_j),&\sigma_j\in A,\end{cases}
\qquad
w_{st}=\begin{cases}z,&st\notin A,\\1-z,&st\in A.\end{cases}
\]
The \emph{cut region} of the odd labelling $A$ is $C_A=\{w_1+\dots+w_k+w_{st}<1\}$. By Lemma~\ref{lem:oc}, the
closed piece is feasible exactly off $\bigcup_AC_A$\,.

\begin{theorem}\label{thm:chain}
The following hold.
\begin{enumerate}[label=\textup{(\alph*)},nosep]
\item The cut regions $C_A$ are pairwise disjoint.
\item On $C_A$\,, put $s_e=-1$ for $e\in A$ and $s_e=+1$ otherwise, and set
$\tilde u_j=\bigl(\prod_{i\le j}s_{\sigma_i}\bigr)u_{v_j}$\,. Then the inequalities
$|\tilde u_j-\tilde u_{j-1}|\le w_j$ for $1\le j\le k$, together with $\sum_jw_j<1-w_{st}$\,,
describe $C_A$ exactly; they imply all other constraints.
\item On $C_A$\,, every segment labelled even has $w_j\in[\clo,\chii)$, and every segment labelled odd
has $1-w_j\in(\clo,\chii]$, where $\clo$ and $\chii$ refer to the endpoints of that segment.
\end{enumerate}
Consequently, the new state is
\begin{equation}\label{eq:close}
g(u_s,u_t,z)=Q_{\rm fib}(u_s,u_t)-\sum_{A}\vol C_A(u_s,u_t,z),
\end{equation}
where $Q_{\rm fib}$ is the fiber over $(u_s,u_t)$ of the configurations in which every segment
is feasible. The masses $G_{\sigma_j}$ alone determine $Q_{\rm fib}$\,.
\end{theorem}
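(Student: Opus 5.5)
Part (a) will follow from Lemma~\ref{lem:parity}(b) applied to each segment and to the edge $st$. Take two distinct odd labellings $A\neq A'$, and suppose a configuration lies in $C_A\cap C_{A'}$. Add the two defining inequalities. Each element $e\in A\triangle A'$ contributes $d_0(e)+d_1(e)\ge1$: for a segment this is Lemma~\ref{lem:parity}(b), since each segment is feasible, and for $st$ it equals exactly $z+(1-z)=1$. Every other element contributes a nonnegative amount, by Lemma~\ref{lem:parity}(a). The set $A\triangle A'$ is nonempty (both sets have odd cardinality, so in fact $|A\triangle A'|$ is even and at least $2$). Hence the sum of the two left-hand sides is at least $2$, contradicting the fact that each is $<1$.

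For (b), I would chain the inequalities of Lemma~\ref{lem:parity}(a) in their signed form, as in its proof. A segment labelled even gives $|u_{v_j}-u_{v_{j-1}}|\le d_0$, and one labelled odd gives $|u_{v_j}+u_{v_{j-1}}|\le d_1$. With the sign changes $\tilde u_j$ both become $|\tilde u_j-\tilde u_{j-1}|\le w_j$. This shows that these inequalities, together with $\sum_j w_j<1-w_{st}$, hold on $C_A$. The substance of (b) is the converse: on this set, the remaining constraints are implied. There are two kinds.
\begin{itemize}[nosep]
\item The new edge constraint $\clo\le z\le\chii$ from Lemma~\ref{lem:edge}. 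Telescoping gives $|\tilde u_k-\tilde u_0|\le\sum w_j<1-w_{st}$, where $\tilde u_k=\pm u_t$ and $\tilde u_0=s_{\sigma_1}$-scaled $u_s$ (I must check the indexing convention; $\tilde u_0=u_s$). This yields $z\ge|u_s\mp u_t|$ or $1-z\ge\ldots$ according to the parity of $A$, and the parity of $|A|$ makes the total sign match what is needed.
\item The odd-cycle inequalities of the other odd labellings $A'$. These hold automatically because the $C_{A'}$ are disjoint from $C_A$ by (a).
\end{itemize}
The membership constraints of segments in their own feasible sets are part of what is being described, so I read ``other constraints'' as the edge constraint of $st$ and the bounds of $\Delta$.

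Part (c) comes from the same telescoping, applied to all segments except one. Bounding the rest from below by Lemma~\ref{lem:parity}(a) gives $w_j<1-w_{st}-\sum_{i\ne j}w_i$. Combined with the triangle-inequality bounds on the $\tilde u$'s and $|u|\le\tfrac12$, this gives $w_j<\chii$ for even $j$ and $1-w_j>\clo$ for odd $j$; the lower halves are just Lemma~\ref{lem:parity}(a). I expect the main obstacle to be the sign bookkeeping here and in (b), keeping track of which of $|u\pm u'|$ appears after switching. I would handle it by first reducing to $A\subset\{st\}\cup\{\sigma_1\}$ through repeated switching at internal vertices (Lemma~\ref{lem:switch}), which moves odd labels along the chain, and then checking only the all-even case with odd $st$.

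Formula~\eqref{eq:close} follows directly. The feasible set of the closed piece is the set where all segments are feasible, minus $\bigcup_A C_A$, and by (a) this union is disjoint, so the volumes subtract. Finally, $Q_{\rm fib}$ factors by conditional independence given the internal vertices. It equals the integral over $u_{v_1},\dots,u_{v_{k-1}}$ of $\prod_j G_{\sigma_j}(u_{v_{j-1}},u_{v_j})$, restricted to the edge-interval condition, and so it depends only on the masses.
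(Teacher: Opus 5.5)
Your arguments for (a), (c), and \eqref{eq:close} are in line with the paper's. Replacing the sign bookkeeping by switching is a legitimate alternative. To pass from $A=\{\sigma_1\}$ to $A=\{st\}$ you also need switching at $s$, which replaces $(u_s,z)$ by $(-u_s,1-z)$; this is harmless because the claims hold for all $(u_s,u_t,z)\in\Delta$.

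\textbf{The gap is in the converse half of (b): you verify the wrong constraints.} The point of (b) is that the staircase inequalities $|\tilde u_j-\tilde u_{j-1}|\le w_j$ and $\sum_jw_j<1-w_{st}$, and nothing else, cut out $C_A$ in the variables $(u_{v_1},\dots,u_{v_{k-1}},w_1,\dots,w_k)$ for fixed $(u_s,u_t,z)$. Proposition~\ref{prop:box} uses exactly this when it integrates over $p_0\le p_1\le\dots\le p_k$ with $W_j=1$ for an edge and a single polynomial for a P-node. So what must be shown to follow are:
\begin{itemize}
\item the upper ends of the segment ranges, $w_j<\chii$ for an even segment and $w_j<1-\clo$ for an odd one;
\item the box constraints $|u_{v_j}|<\tfrac12$ at the internal vertices.
\end{itemize}
Neither item on your list is of this kind. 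The constraint $\clo\le z\le\chii$ holds by hypothesis, since $(u_s,u_t,z)\in\Delta$ is fixed. The inequalities of other labellings $A'$ play no role in describing $C_A$; they matter only for the disjointness in (a).

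The upper ends do appear in your (c), but there you feed in $|u|\le\tfrac12$. That is fine for actual configurations, but then nothing in your proposal shows that the staircase inequalities by themselves force $|u_{v_j}|<\tfrac12$, which is needed for the integral of Proposition~\ref{prop:box} not to range over points outside $\QQ$. The repair has two steps.
\begin{itemize}
\item The telescoping needs only the lower bounds and the triangle inequality. Because the labelling is odd, going around the cycle from $v_j$ back to $v_{j-1}$ the signed coordinates run from $\tilde u_j$ to $-\tilde u_{j-1}$. Hence $\sum_{i\ne j}w_i+w_{st}\ge|\tilde u_j+\tilde u_{j-1}|$, which equals $|u_{v_{j-1}}+u_{v_j}|$ or $|u_{v_{j-1}}-u_{v_j}|$ according as $\sigma_j$ is even or odd.
\item Going once around the whole cycle from $v_j$ gives $2|u_{v_j}|\le\sum_iw_i+w_{st}<1$. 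Equivalently, (c) yields $\clo<\chii$ for every segment, which forces both of its endpoints into $(-\tfrac12,\tfrac12)$.
\end{itemize}
With these two observations placed in (b), your proof is complete and is essentially the paper's.
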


\noindent From part (b), for a cycle with fixed vertex coordinates, each cut region is a simplex in
the lengths $w_j$\,; using this, we obtain in Theorem~\ref{thm:cap} a bound on the volume that the
odd-cycle inequalities of a cycle can cut off in any graph.
\begin{proof}
(a) Suppose that a point lies in $C_A\cap C_B$ with $A\neq B$. We add the two strict
inequalities. Wherever $A$ and $B$ disagree, the segment or the edge $st$ contributes
$d_0+d_1\ge1$, by Lemma~\ref{lem:parity}(b), or $z+(1-z)=1$. Because both sets are odd, they
disagree in at least two places. So the left-hand side is at least $2$, a contradiction.

(b) The listed lower bounds hold by Lemma~\ref{lem:parity}(a). We chain them along the cycle
with the triangle inequality, as in the proof of that lemma, and obtain the following for each
segment $\sigma_j$ between $v_{j-1}$ and $v_j$\,. If $\sigma_j$ is labelled even, then
$w_j+|u_{v_{j-1}}+u_{v_j}|\le\sum_iw_i+w_{st}<1$, because the rest of the cycle carries an odd
number of odd traversals. If $\sigma_j$ is labelled odd, then, likewise,
$w_j+|u_{v_{j-1}}-u_{v_j}|<1$. Going once around the whole cycle, we obtain $2|u_v|<1$ for every
vertex $v$. These are all of the remaining constraints.

(c) This is a restatement of the inequalities in (b).
\end{proof}

\subsection{Rotated coordinates}
Let $S_j=w_1+\dots+w_j$ with $S_0=0$, and define $p_j=(a_j,b_j)=(S_j+\tilde u_j,S_j-\tilde u_j)$.
The cone condition $S_j-S_{j-1}\ge|\tilde u_j-\tilde u_{j-1}|$ is equivalent to
$a_{j-1}\le a_j$ and $b_{j-1}\le b_j$\,, and $d\tilde u\,dS=\tfrac12\,da\,db$. The start is
$p_0=(u_s,-u_s)$. The end $\tilde u_k=e:=\bigl(\prod_{i\le k}s_{\sigma_i}\bigr)u_t$ is fixed,
and $S_k$ ranges over $[|u_s-e|,1-w_{st})$; the lower limit expresses $p_k\ge p_0$\,.

\begin{proposition}\label{prop:box}
Let $W_j(p_{j-1},p_j)$ be the density of segment $j$:
\begin{itemize}[nosep]
\item for an edge, $W_j=1$;
\item for a P-node, $W_j=m^{\sigma_j}_{\epsilon}(u_{v_{j-1}},u_{v_j},w_j)$, where $\epsilon$
is the parity used, $u_v=\pm\tilde u$, and $w_j=S_j-S_{j-1}$\,.
\end{itemize}
Let $\omega_j$ be the (even) weight of the internal vertex $v_j$\,. We define
\[
F_1=W_1\,,\qquad
F_j(p_j)=\frac12\iint\limits_{p_0\le p_{j-1}\le p_j}F_{j-1}(p_{j-1})\,\omega_{j-1}(\tilde u_{j-1})\,W_j(p_{j-1},p_j)\,dp_{j-1}
\]
for $2\le j\le k$, where $\tilde u_{j-1}=(a_{j-1}-b_{j-1})/2$. Then
\[
\vol C_A=\int_{|u_s-e|}^{1-w_{st}}F_k(S+e,S-e)\,dS.
\]
If the segment densities are polynomials on the ranges of Theorem~\ref{thm:chain}(c), then
every $F_j$ is a polynomial in $(p_j,u_s,u_t)$.
\end{proposition}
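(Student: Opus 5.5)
We start from Theorem~\ref{thm:chain}(b), which describes $C_A$ exactly. Fix $(u_s,u_t,z)$, and hence $w_{st}$. Then $C_A$ is the set of internal vertex coordinates $u_{v_1},\dots,u_{v_{k-1}}$ and segment configurations such that each segment's relevant parity distance $w_j$ satisfies $|\tilde u_j-\tilde u_{j-1}|\le w_j$ and $\sum_j w_j<1-w_{st}$. All other constraints are implied.

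Because the segments share no edge coordinates, we may condition on the internal coordinates. The weighted measure of $C_A$ is then
\[
\int\prod_{j=1}^{k-1}\omega_j(u_{v_j})\prod_{j=1}^kW_j\,\mathbf 1[\text{cone conditions},\ \textstyle\sum w_j<1-w_{st}]\,du\,dw .
\]
Here $W_j$ is the density of the parity distance used by segment $j$, which is $m_\epsilon^{\sigma_j}$ by Definition~\ref{def:states}. For a single edge, $z_e$ is uniform on its admissible interval, so $W_j=1$ there; the indicator of that interval is exactly the cone condition. Two facts justify writing the densities in terms of $\tilde u$. First, the map $u_{v_j}\mapsto\tilde u_j$ is a sign change, so it is volume-preserving. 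Second, the weights $\omega_j$ are even (vertex weights are fiber volumes, even by switching), so $\omega_j(u_{v_j})=\omega_j(\tilde u_j)$.

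Next we change variables from $(w_1,\dots,w_k)$ to partial sums $(S_1,\dots,S_k)$, which has unit Jacobian. For $1\le j\le k-1$ we then pass from $(\tilde u_j,S_j)$ to $p_j=(a_j,b_j)$. As stated, this gives $d\tilde u_j\,dS_j=\tfrac12\,da_j\,db_j$, and the cone condition becomes $p_{j-1}\le p_j$. The last point is different, since $\tilde u_k=e$ is fixed and only $S_k$ remains free. Its range comes from $p_{k-1}\le p_k$ together with $\sum w_j<1-w_{st}$. Because the chain $p_0\le p_1\le\dots\le p_k$ forces $p_0\le p_k$, we get $S_k\ge|u_s-e|$.

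We then integrate out $p_1,\dots,p_{k-1}$ in order, as iterated integrals over the nested orthants $p_0\le p_{j-1}\le p_j$. This is exactly the recursion defining $F_j$, with $F_1=W_1$ depending on $p_1$ through $w_1=S_1$ and $\tilde u_1$ relative to $p_0$. What remains is the outer $dS$ integral at $p_k=(S+e,S-e)$, which is the claimed formula. The only bookkeeping is to check that each factor $\tfrac12$ is attached to the correct integration. There are $k-1$ two-dimensional integrations and $k-1$ factors $\tfrac12$, one per $F_j$ with $j\ge2$, so they match.

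For polynomiality we use induction on $j$. Assume each $W_j$ is a polynomial in $(\tilde u_{j-1},\tilde u_j,w_j)$, hence in $(p_{j-1},p_j,u_s,u_t)$, on the range of Theorem~\ref{thm:chain}(c). By (b), every point of the integration domain lies in $C_A$, so that polynomial form is the one that applies throughout. The integrand in $F_j$ is then a polynomial in $(p_{j-1},p_j,u_s,u_t)$. The domain is the rectangle $[a_0,a_j]\times[b_0,b_j]$, whose limits are linear in $p_j$ and $u_s$. Integrating a polynomial between polynomial limits gives a polynomial, so $F_j$ is a polynomial.

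The main obstacle is the claim that no extra case splits appear. We must check that throughout the rectangle the point stays in the region where the single polynomial formula for $W_j$ is valid. This means the endpoint quantities $\clo,\chii$ for segment $j$ must not cross the breakpoints of the density. Theorem~\ref{thm:chain}(c) asserts exactly this for points of $C_A$. We will need that the rectangle constraints, combined with the upper bound on $S_k$ imposed later, still place us in $C_A$. Where the upper constraint $\sum w_j<1-w_{st}$ is not yet enforced inside $F_j$, we will argue that a polynomial identity on the relevant region extends, and that the outer limit $S<1-w_{st}$ cuts everything back into $C_A$.
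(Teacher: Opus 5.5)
Your proposal is correct and follows the paper's route: Theorem~\ref{thm:chain}(b) together with the change of variables to partial sums and rotated coordinates gives the formula, and Theorem~\ref{thm:chain}(c) lets you replace each density by its single polynomial form, after which the affine limits preserve polynomiality. The obstacle you flag is resolved as you suggest: the partial sums $S_j$ are nondecreasing, so the full iterated domain, including the outer limits $|u_s-e|\le S<1-w_{st}$, is exactly $C_A$; you may therefore substitute the polynomial forms from the outset, and the resulting polynomial $F_j$ agrees with the true one at every $p_j$ that extends to a point of $C_A$.
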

\begin{proof}
The formula follows from Theorem~\ref{thm:chain}(b) and the change of variables. By
Theorem~\ref{thm:chain}(c), we evaluate each density inside the range where it is a polynomial.
The integration limits are affine, so the integrals preserve polynomiality.
\end{proof}

\noindent When every segment is an edge and there are no vertex weights, $W_j=1$ and the
recursion reads $F_j=\tfrac12\,F_{j-1}\star\mathbf 1_{\R^2_{\ge0}}$\,, where
$(f\star g)(p)=\int f(q)\,g(p-q)\,dq$ and $F_{j-1}$ is extended by $0$ outside $\{q\ge p_0\}$: each step convolves with the indicator of the positive
quadrant, because $p_{j-1}\le p_j$ is the only constraint that couples consecutive segments. A
piece contributes its density $W_j$ in place of the indicator, and a vertex weight multiplies
$F_{j-1}$ before the convolution.

\begin{lemma}[switching along a chain]\label{lem:chainswitch}
The volume $\vol C_A$ depends only on whether $st\in A$. In particular,
\[
\sum_{A\ni st}\vol C_A=2^{k-1}\vol C_{\{st\}}\,,\qquad \sum_{A\not\ni st}\vol C_A=2^{k-1}\vol C_{\{\sigma_1\}}\,.
\]
\end{lemma}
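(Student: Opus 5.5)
The plan is to use the switching maps of Lemma~\ref{lem:switch} at internal vertices of the chain, which turn $C_A$ into $C_{A'}$ for a suitable $A'$ without changing volume. We fix $(u_s,u_t,z)$ and regard $\vol C_A$ as the fiber volume, over $(u_s,u_t,z)$, of the configurations of the segments, weighted by the vertex weights. The goal is then to show that the odd labellings with the same membership of $st$ form a single orbit under switching at the internal vertices $v_1,\dots,v_{k-1}$, and that each such switch preserves this fiber volume.

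The first step is the effect of one switch. Switching at an internal vertex $v_j$ ($1\le j\le k-1$) negates $u_{v_j}$ and replaces $z_e$ by $1-z_e$ for every edge $e$ at $v_j$. All these edges lie in the two segments $\sigma_j$ and $\sigma_{j+1}$, and the terminals $s,t$ and the edge $st$ are untouched. Within $\sigma_j$, the switch flips the parity of every $v_{j-1}$--$v_j$ path, since each such path uses exactly one edge at $v_j$, while the other edges at $v_j$ inside $\sigma_j$ lie on no such path through their other endpoint. So it exchanges $d_0(\sigma_j)$ and $d_1(\sigma_j)$, and likewise for $\sigma_{j+1}$. It also maps the feasible configurations of each segment onto feasible ones, because Lemma~\ref{lem:switch} applies to the piece itself, and it preserves the fiber measure, being a volume-preserving affine involution that fixes $(u_s,u_t,z)$. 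The vertex weight at $v_j$ is even in $u_{v_j}$, so the weighted measure is preserved as well. Hence $\sigma_{v_j}$ maps $C_A$ bijectively onto $C_{A\triangle\{\sigma_j,\sigma_{j+1}\}}$, since the value $w_{st}$ is unchanged and the segment lengths in the defining sum are only relabelled. Therefore $\vol C_A=\vol C_{A\triangle\{\sigma_j,\sigma_{j+1}\}}$.

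The second step is transitivity. The moves $A\mapsto A\triangle\{\sigma_j,\sigma_{j+1}\}$ generate all even subsets of $\{\sigma_1,\dots,\sigma_k\}$, because adjacent transpositions of pairs span the even-weight code of a path. Two odd sets $A,B$ that agree on $st$ differ by an even subset of segments, so $B$ is reached from $A$ by switching. Consequently $\vol C_A$ depends only on whether $st\in A$.

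For the counting, when $st\in A$ the set $A\setminus\{st\}$ is an even subset of the $k$ segments, and there are $2^{k-1}$ of these, each equivalent to $\{st\}$. When $st\notin A$, $A$ is an odd subset of the segments, again $2^{k-1}$ of them, each equivalent to $\{\sigma_1\}$. The main point needing care is the first step: the claim that switching at $v_j$ swaps $d_0$ and $d_1$ of both incident segments and preserves the weighted fiber measure of feasible configurations, and in particular that parities of paths internal to a P-node segment flip consistently. That is exactly the argument behind \eqref{eq:m1}, applied at the terminal $v_j$ of each segment.
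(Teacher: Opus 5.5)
Your proposal is correct and follows the paper's proof. Switching at an internal vertex $v_j$ swaps $d_0$ and $d_1$ of $\sigma_j$ and $\sigma_{j+1}$, fixes $(u_s,u_t,z)$, and preserves the evenly weighted measure, so it maps $C_A$ onto $C_{A\triangle\{\sigma_j,\sigma_{j+1}\}}$; the pairs $\{\sigma_j,\sigma_{j+1}\}$ generate all even subsets of segments, and you additionally spell out the count of $2^{k-1}$ odd labellings on each side, which the paper leaves implicit.
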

\begin{proof}
Switching at an internal vertex $v_j$ (Lemma~\ref{lem:switch}, applied inside the incident
segments) swaps $d_0$ and $d_1$ of $\sigma_j$ and of $\sigma_{j+1}$\,, and it fixes $u_s$\,,
$u_t$\,, and $z$. Vertex weights are even, so switching preserves the weighted measure. Hence
switching at $v_j$ maps $C_A$ bijectively onto $C_{A\triangle\{\sigma_j,\sigma_{j+1}\}}$ with the
same volume. These moves generate all changes of the labels of the segments that preserve the
parity of the number of odd segments, and whether $st\in A$ determines that parity.
\end{proof}

\begin{proposition}[$\QQ$-fiber]\label{prop:qfib}
Let $v=|u|$. The masses and the vertex weights are even, so they are functions of the variables
$v$. We write $G_{\sigma_j}(v,v')$ for the mass of segment $j$; for an edge, $G_{\sigma_j}=1-2\max(v,v')$. Then
the transfer
\[
q_1(v)=2\,\omega_1(v)G_{\sigma_1}(v_s,v),\qquad
q_j(v)=2\,\omega_j(v)\int_0^{1/2}q_{j-1}(v')G_{\sigma_j}(v',v)\,dv',
\]
followed by $Q_{\rm fib}=\int_0^{1/2}q_{k-1}(v')G_{\sigma_k}(v',v_t)\,dv'$, yields
$Q_{\rm fib}(u_s,u_t)$. Suppose that each mass is a polynomial on $v\ge v'$ and on $v\le v'$
(Theorem~\ref{thm:allpoly}(c)). Then each $q_j$ is a polynomial on $v\le v_s$ and on
$v\ge v_s$\,, and $Q_{\rm fib}$ is a polynomial on $v_t\le v_s$ and on $v_t\ge v_s$\,.
\end{proposition}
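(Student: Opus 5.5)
The plan is to write $Q_{\rm fib}$ as an iterated integral and then fold the sign of each internal vertex coordinate into the integrand. Fix $(u_s,u_t)$. For fixed internal coordinates $u_{v_1},\dots,u_{v_{k-1}}$, the segments share no edge coordinates. Within each segment, feasibility involves only its own coordinates, because the odd-cycle inequalities of a segment only involve its own cycles. So the fiber of configurations in which every segment is feasible is a product over the segments, and its weighted volume is $\prod_j\omega_j(u_{v_j})\prod_j G_{\sigma_j}(u_{v_{j-1}},u_{v_j})$. Integrating over $u_{v_j}\in[-\tfrac12,\tfrac12]$ gives $Q_{\rm fib}$. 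The range is forced by Lemma~\ref{lem:edge}, and outside it the masses vanish. For an edge segment, the mass is the length $1-2\max(|u|,|u'|)$ of the admissible $z_e$-interval by Lemma~\ref{lem:edge}.

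Next I would use evenness. By \eqref{eq:m1} and the remark following it, and by Lemma~\ref{lem:switch} applied at each terminal, every mass $G_{\sigma_j}$ is even in each argument, and the weights are even by hypothesis. So the integrand depends only on $v_j=|u_{v_j}|$. Each integral $\int_{-1/2}^{1/2}\,du$ becomes $2\int_0^{1/2}\,dv$, which produces the factor $2$ attached to each internal vertex in $q_j$. I would then order the integrations from $v_1$ to $v_{k-1}$ as in a transfer-matrix product. The first internal vertex contributes $q_1(v)=2\omega_1(v)G_{\sigma_1}(v_s,v)$. Each subsequent step multiplies by $G_{\sigma_j}(v',v)$, integrates out $v'$, and attaches $2\omega_j(v)$. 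The last segment gives the final integral against $G_{\sigma_k}(v',v_t)$. This is Fubini for a nonnegative bounded integrand on a box, so no convergence issue arises.

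For the polynomiality statement I would induct on $j$. The base case: $q_1$ is a polynomial on $v\le v_s$ and on $v\ge v_s$, since $G_{\sigma_1}$ is polynomial on each side of the diagonal and $\omega_1$ is a polynomial. For the inductive step, on the region $v\le v_s$ I would split $\int_0^{1/2}dv'$ at the breakpoints $v'=v$ and $v'=v_s$. On $v\le v_s$ the ordering of these points is $0\le v\le v_s\le\tfrac12$. On each subinterval, $q_{j-1}(v')$ is a single polynomial by induction, and $G_{\sigma_j}(v',v)$ is a single polynomial according to whether $v'\le v$ or $v'\ge v$. The limits are $0$, $v$, $v_s$, $\tfrac12$, all affine in $(v,v_s)$. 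Hence the integral is a polynomial in $(v,v_s)$ on $v\le v_s$, and symmetrically on $v\ge v_s$. The same splitting applied to the final integral, with $v_t$ in place of $v$, shows that $Q_{\rm fib}$ is a polynomial on $v_t\le v_s$ and on $v_t\ge v_s$.

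The main obstacle I expect is confirming that the breakpoints do not proliferate: a priori, $q_j$ could acquire new kinks at $v=v'$-type locations. The argument above shows that the only breakpoint ever introduced in the outer variable is the comparison with $v_s$, because the breakpoint $v'=v$ becomes an integration limit and is absorbed. I would also double-check one point: the factor $\omega_j$ must be a single polynomial in $v$, or at least even and polynomial on $[0,\tfrac12]$, for the claim to hold as stated. If the weights are only piecewise polynomial, for instance a cut-vertex marginal that is itself split at some point, I would need the additional hypothesis that they are polynomial on $[0,\tfrac12]$, which I expect Section~\ref{sec:cutvertex} supplies.
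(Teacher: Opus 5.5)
Your proposal is correct and follows the paper's proof. You use the product structure of the fiber given the internal vertex coordinates, fold each $u_{v_j}$ onto $[0,\tfrac12]$ by evenness to get the factor $2$, and obtain the two-piece polynomiality by splitting each integral at $v'=v$ and $v'=v_s$, which is exactly the paper's (much terser) argument. Your caveat about the weights is settled by Section~\ref{sec:cutvertex}: each vertex weight there is a product of block marginals, and each marginal is a single even polynomial on $[-\tfrac12,\tfrac12]$.
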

\begin{proof}
We integrate out the segments given the internal vertex values, and we fold each
$u_{v_j}\in[-\tfrac12,\tfrac12]$ onto $v\in[0,\tfrac12]$, which produces the factor $2$. The
integrals split at $v'=v$ and at $v'=v_s$\,.
\end{proof}

\noindent In operator language, each step applies the integral operator with kernel $G_{\sigma_j}$ and then
multiplies by the vertex weight, so the transfer is a product of integral operators, a
continuous version of multiplying the transfer matrices of the segments.

Suppose that a P-node contains $st$ and several chains. Then the conditions for $C_i+st$ and
$C_j+st$ imply the condition for each new cycle through two chains $C_i$ and $C_j$\,: from
Lemma~\ref{lem:parity}(c), we have $d_0(C_i)+d_1(C_j)\ge z+(1-z)$. Therefore, the states of the
closed chains multiply.

% ---------------------------------------------------------------------------
\section{Thetas}\label{sec:theta}

\begin{theorem}\label{thm:theta}
Let $H$ be a theta with chains $C_1,\dots,C_r$ ($r\ge2$). For each $i$, let
\begin{itemize}[nosep]
\item $S_i(u_s,u_t,z)$ be the state of the edge-closed piece $C_i+st$, where $st$ is a
\emph{virtual} edge;
\item $f_i(u_s,u_t,\cdot)$ be the density of $\ell_i:=1-d_1(C_i)$.
\end{itemize}
Then
\[
G_H=\prod_jS_j(\clo)+\int_{\clo}^{\chii}\sum_if_i(m)\prod_{j\ne i}S_j(m)\,dm,
\]
and, for $\clo<w<\chii$\,,
\[
m_0^H(w)=\sum_if_i(w)\prod_{j\ne i}S_j(w)-\frac{\partial}{\partial w}\prod_jS_j(w).
\]
\end{theorem}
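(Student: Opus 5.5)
The plan is to reduce feasibility of $H$ to a comparison between the numbers $\ell_j=1-d_1(C_j)$ and $d_0(C_i)$. Then I would compute $G_H$ and the tail of $d_0(H)$ the way one computes the law of a maximum of independent random variables. Fix $(u_s,u_t)$. Since the chains share only $s$ and $t$, their configurations are independent and the fiber measure of configurations in which every chain is feasible is the product of the chain measures. Inside a chain every cycle lies in a single segment, so a chain is feasible exactly when its segments are. The only new cycles of $H$ consist of an $s$--$t$ path in $C_i$ and one in $C_j$ for some $i\neq j$. By Lemma~\ref{lem:oc}, every odd labelling of such a cycle has length at least $1$ exactly when $d_0(C_i)+d_1(C_j)\ge1$ and $d_1(C_i)+d_0(C_j)\ge1$. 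So $H$ is feasible iff $\ell_j\le d_0(C_i)$ for all $i\ne j$. Lemma~\ref{lem:parity}(b) gives the case $i=j$ for free. Writing $M=\max_j\ell_j$, feasibility is therefore $M\le\min_id_0(C_i)=d_0(H)$.

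Next I would identify $S_i(w)$. The fiber of $C_i+st$ over $z_{st}=w$ consists of the feasible configurations of $C_i$ for which the two new odd labellings through $st$ have length at least $1$, namely $d_0(C_i)\ge w$ and $d_1(C_i)\ge1-w$. By Lemma~\ref{lem:parity}(c), no other condition arises. So $S_i(w)$ is the measure of $\{\ell_i\le w\le d_0(C_i)\}$ in the fiber of $C_i$.

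I also need two facts from Lemma~\ref{lem:parity}(a): always $\ell_j\le\chii$ and $d_0(C_j)\ge\clo$.

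For $G_H$, I split according to the value of $M$.
\begin{itemize}[nosep]
\item If $M\le\clo$, then every chain satisfies $\ell_j\le\clo\le d_0(C_j)$, which is automatically feasible. This event has measure $\prod_jS_j(\clo)$.
\item If $M=m\in(\clo,\chii]$, then, up to ties of measure zero, a unique chain $i$ attains it, which contributes density $f_i(m)$. Each other chain must satisfy $\ell_j\le m\le d_0(C_j)$, which contributes $S_j(m)$. No extra condition is needed on $C_i$ itself: for $j\neq i$ we need $d_0(C_i)\ge\ell_j$, and this follows from $d_0(C_i)\ge\ell_i=m\ge\ell_j$ by Lemma~\ref{lem:parity}(b).
\end{itemize}
Integrating over $m$ gives the formula for $G_H$.

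For the density, I would compute the tail $T(w)$, the measure of feasible configurations with $d_0(H)>w$, for $\clo<w<\chii$, using the same split at $M\le w$ versus $M=m>w$. In the first case every chain needs $\ell_j\le w<d_0(C_j)$, which has measure $\prod_jS_j(w)$ up to null sets. In the second case $d_0(C_i)\ge m>w$ holds automatically for the maximizing chain, and every other chain needs $S_j(m)$. Hence
\[
T(w)=\prod_jS_j(w)+\int_w^{\chii}\sum_if_i(m)\prod_{j\ne i}S_j(m)\,dm,
\]
and $m_0^H=-\partial_wT$ gives the stated formula.

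The main obstacle is the measure-zero bookkeeping: ties among the $\ell_j$, boundary cases in which $d_0(C_j)=w$, and the justification that $T$ is differentiable. I would handle these by the remark after Definition~\ref{def:states} that $d_0$, $d_1$ are piecewise linear and nonconstant, so their level sets have measure zero.
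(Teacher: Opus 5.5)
Your proposal is correct and follows the paper's proof essentially step for step. You use the same reduction of feasibility to $\max_j\ell_j\le\min_i d_0(C_i)$, the same identification $S_j(z)=\mu_j(\ell_j\le z\le d_0(C_j))$, and the same split on whether the maximum exceeds the threshold, with $d_0(C_i)\ge\ell_i$ making the maximizing chain's condition automatic. The only difference is cosmetic: the paper computes $\mu(\text{feasible},\ \min_j d_0(C_j)\ge q)$ once, then sets $q=\clo$ to get $G_H$ (there the constraint is vacuous) and differentiates in $q$ to get $m_0^H$, whereas you carry out the two computations separately.
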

\begin{proof}
We fix $(u_s,u_t)$; the chains are then independent. We put $r_i=d_0(C_i)$. By
Lemma~\ref{lem:parity}, $\ell_i\le r_i$\,, $r_i\ge\clo$\,, and $\ell_i\le\chii$\,. The new
cycles are the unions $C_i\cup C_j$ with $i\ne j$. Their odd-cycle conditions state that
$\ell_i\le r_j$ for all $i\ne j$; because $\ell_i\le r_i$\,, these conditions are equivalent to
$M:=\max_i\ell_i\le\min_jr_j$\,.

By Lemma~\ref{lem:parity}(c), applied to the virtual edge, the state of $C_j+st$ at $z$ is
$S_j(z)=\mu_j(\ell_j\le z\le r_j)$. For $q\in[\clo,\chii]$, we split on the value of $M$:
\begin{itemize}[nosep]
\item if $M\le q$, then the event requires $\ell_j\le q\le r_j$ for all $j$, and it has
probability $\prod_jS_j(q)$;
\item if $M=m>q$, then, off a set of measure zero, a unique $i$ attains the maximum, because
$\ell_i$ and $\ell_{i'}$ depend on disjoint sets of coordinates and their level sets have measure
zero (as after Definition~\ref{def:states}); moreover, $r_i\ge m$ holds automatically because
$\ell_i\le r_i$\,.
\end{itemize}
Hence
\[
\mu\bigl(\text{feasible},\ \min_jr_j\ge q\bigr)=\prod_jS_j(q)+\int_q^{\chii}\sum_if_i(m)\prod_{j\ne i}S_j(m)\,dm.
\]
Setting $q=\clo$\,, where the condition $\min_jr_j\ge q$ holds automatically, we obtain $G_H$\,.
Differentiating in $q$, we obtain $m_0^H$\,, because $d_0(H)=\min_jr_j$\,.
\end{proof}

By Theorem~\ref{thm:chain}(c), a parent uses only densities on $(\clo,\chii)$. From
Theorem~\ref{thm:theta}, we therefore obtain everything that the dynamic program needs.

% ---------------------------------------------------------------------------
\section{Every state is a single polynomial}\label{sec:poly}

\begin{lemma}[chain densities]\label{lem:chaindens}
Let $C$ be a chain whose segments have densities that are polynomials on the ranges of
Theorem~\ref{thm:chain}(c). Then, for $q\in(\clo,\chii)$, the following hold.
\begin{itemize}[nosep]
\item The density of $d_0(C)$ at $q$ is $2^{k-1}F^{\{st\}}_k(q+u_t,q-u_t)$.
\item The density of $1-d_1(C)$ at $q$ is $2^{k-1}F^{\{\sigma_1\}}_k(1-q-u_t,1-q+u_t)$.
\end{itemize}
Both are single polynomials in $(u_s,u_t,q)$.
\end{lemma}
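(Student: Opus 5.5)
We relate the distribution of $d_0(C)$ and of $1-d_1(C)$ to the cut regions of the chain closed by a virtual edge $st$, and then read off the densities from Proposition~\ref{prop:box} by differentiating in $z$.

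\emph{Step 1: $d_0(C)$.} Fix $(u_s,u_t)$ and close $C$ with a virtual edge of value $z$, so that $(u_s,u_t,z)\in\Delta$. An odd labelling $A\ni st$ has $w_{st}=1-z$, and, with all segments even, $C_{\{st\}}$ is the event $\sum_jd_0(\sigma_j)<z$ along the all-even path. The union $\bigcup_{A\ni st}C_A$ is exactly the event that some $s$--$t$ path through $C$ with an even labelling has length $<z$, that is, the event $d_0(C)<z$. The regions $C_A$ are pairwise disjoint by Theorem~\ref{thm:chain}(a), so Lemma~\ref{lem:chainswitch} gives
\[
\mu\bigl(d_0(C)<z\bigr)=2^{k-1}\vol C_{\{st\}}(u_s,u_t,z).
\]
The segments must also be feasible, and the vertex weights must be included in $\mu$. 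This is consistent, because Proposition~\ref{prop:box} integrates the segment densities, whose total masses are the feasible fibers. For $A=\{st\}$ all signs $s_{\sigma_i}$ are $+1$, so $e=u_t$ and $1-w_{st}=z$. Proposition~\ref{prop:box} then gives $\vol C_{\{st\}}=\int_{|u_s-u_t|}^{z}F_k(S+u_t,S-u_t)\,dS$. Differentiating in $z=q$ for $q\in(\clo,\chii)$ yields the first formula, with $F_k=F_k^{\{st\}}$.

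\emph{Step 2: $1-d_1(C)$.} Take $A=\{\sigma_1\}$. Now $w_{st}=z$, and $\sum_jw_j$ is the length of the labelling with $\sigma_1$ odd and all other segments even. The union over $A\not\ni st$ with $A$ odd is the event $d_1(C)<1-z$, that is, $\ell:=1-d_1(C)>z$. By Lemma~\ref{lem:chainswitch},
\[
\mu(\ell>z)=2^{k-1}\int_{|u_s-e|}^{1-z}F_k^{\{\sigma_1\}}(S+e,S-e)\,dS,
\]
where $e=s_{\sigma_1}u_t=-u_t$. The density of $\ell$ at $q$ is $-\partial_q\mu(\ell>q)$, which equals $2^{k-1}F_k^{\{\sigma_1\}}(1-q-u_t,1-q+u_t)$. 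This is the second formula.

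\emph{Step 3: polynomiality.} This step needs the most care. Proposition~\ref{prop:box} already states that $F_k$ is a polynomial in $(p_k,u_s,u_t)$, provided the segment densities are polynomials on the ranges of Theorem~\ref{thm:chain}(c). That holds on $C_A$, which is where $F_k$ is evaluated. The care is needed to ensure that for every $q\in(\clo,\chii)$ the evaluation point lies in the region where Proposition~\ref{prop:box}'s recursion is valid, and that the integration limits $p_0\le p_{j-1}\le p_j$ do not switch form. The lower limit $p_0=(u_s,-u_s)$ and the upper limit $p_j$ are affine, so the iterated integrals produce a single polynomial, with no case split on the sign of $u_s-u_t$. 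The constraint $S_k\ge|u_s-e|$ appears only as a limit of the outer $S$-integral, which disappears on differentiation. One must also check that differentiating at $z=q$ is legitimate: the cumulative function has to be differentiable there, and $q$ has to lie strictly inside $(\clo,\chii)$ so that $(u_s,u_t,q)\in\Delta$ and the closed-chain description of Theorem~\ref{thm:chain} applies. Substituting the affine point $(q\pm u_t,\,q\mp u_t)$, or its counterpart at $1-q$, into the polynomial $F_k$ then gives a single polynomial in $(u_s,u_t,q)$.
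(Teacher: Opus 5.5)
Your proposal is correct and follows essentially the same route as the paper. A virtual edge $st$ with value $q$ identifies $\{d_0(C)<q\}$ and $\{d_1(C)<1-q\}$ with the disjoint unions of the cut regions $C_A$ with $A\ni st$ and with $A\not\ni st$. Proposition~\ref{prop:box}, with $e=\pm u_t$, then turns these into integrals of $F_k$ whose only nonpolynomial ingredient is the lower limit $|u_s\mp u_t|$, and differentiating in $q$ removes that limit.

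The only difference is that you apply Lemma~\ref{lem:chainswitch} before differentiating, whereas the paper applies it after, which is immaterial.
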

\begin{proof}
We add a virtual edge $st$ with value $q\in[\clo,\chii]$; by Lemma~\ref{lem:edge}, this value is
admissible. For $A\ni st$, the cut region $C_A$ is the event that the labelling of $C$ with odd set
$A\setminus\{st\}$, which has an even number of odd edges, has length less than $q$. By Theorem~\ref{thm:chain}(a), these events
are disjoint, so $\mu\{d_0(C)<q\}=\sum_{A\ni st}\vol C_A$\,.

For $A\ni st$, the chain part of $A$ has even size, so $e=+u_t$ for every such $A$. The
integration range in Proposition~\ref{prop:box} is then $[\,|u_s-u_t|,\ q)=[\clo,q)$. Hence
\[
\mu\{d_0(C)<q\}=\sum_{A\ni st}\int_{\clo}^{q}F^A_k(S+u_t,S-u_t)\,dS,
\]
and differentiating in $q$ removes the only nonpolynomial ingredient, the lower limit
$\clo=|u_s-u_t|$. We then apply Lemma~\ref{lem:chainswitch}.

For $1-d_1$\,, we argue in the same way with $A\not\ni st$. Then $e=-u_t$\,, the range is
$[\,|u_s+u_t|,1-q)$, and we differentiate the upper limit.
\end{proof}

\begin{theorem}\label{thm:allpoly}
For every node of the canonical decomposition of a 2-connected series-parallel graph, the
following hold.
\begin{enumerate}[label=\textup{(\alph*)},nosep]
\item The state $g$ of an edge-closed node is a single polynomial on $\Delta$.
\item The density $m_0$ of a theta is a single polynomial on
$\{(u_s,u_t,w):\clo<w<\chii\}$.
\item The mass $G$ of any node is a polynomial on each of the four triangles into which the
lines $u_s=\pm u_t$ divide $[-\tfrac12,\tfrac12]^2$. Equivalently, as a function of
$(|u_s|,|u_t|)$, it is a polynomial on $|u_s|\ge|u_t|$ and on $|u_s|\le|u_t|$.
\end{enumerate}
The same conclusions hold with even polynomial vertex weights.
\end{theorem}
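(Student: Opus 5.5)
We argue by structural induction on the canonical decomposition, from the leaves upward, proving (a), (b), (c) together. The induction hypothesis is used in exactly the form the earlier results ask for. Segment densities must be single polynomials on the ranges of Theorem~\ref{thm:chain}(c), and segment masses must be polynomials on $v\ge v'$ and on $v\le v'$ in the folded variables. The base case is a single edge: $g\equiv1$ on $\Delta$, its density on the relevant range is that of $z_{st}$, hence $1$, and its mass $1-2\max(|u_s|,|u_t|)$ is visibly polynomial on each side of $|u_s|=|u_t|$. Even polynomial vertex weights only multiply integrands by polynomials in the $\tilde u_j$ (or $v$), so they never affect polynomiality. We therefore carry them along silently.

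\emph{Part (a).} Let $X$ be an edge-closed node. Its state is the product of the states of its closed chains $C_i+st$, by the remark at the end of Section~\ref{sec:chain}, so it suffices to treat one closed chain. We plan two independent arguments. The first is structural. Without weights, $g$ is the fiber function of the $0/1$ polytope $\PP(X)$ (by Theorem~\ref{thm:padberg} the feasible set is $\PP(X)$) under the projection to $(u_s,u_t,z_{st})$. The vertices project onto the four vertices of $\Delta$. With $r=3$, the hyperplanes spanned by three affinely independent image points are exactly the facet planes of $\Delta$. Lemma~\ref{lem:pushforward} then says that $g$ is a single polynomial on the interior of $\Delta$. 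The second argument is computational and covers vertex weights as well. We use \eqref{eq:close} together with Lemma~\ref{lem:chainswitch}. The term $2^{k-1}\vol C_{\{st\}}$ is $\int_{\clo}^{z}F_k(S+u_t,S-u_t)\,dS$, with $F_k$ polynomial by Proposition~\ref{prop:box} and the induction hypothesis. The term $2^{k-1}\vol C_{\{\sigma_1\}}$ has the lower limit $|u_s+u_t|=1-\chii$. The catch is that $\clo=|u_s-u_t|$ and $|u_s+u_t|$ are not polynomial, and neither is $Q_{\rm fib}$ (Proposition~\ref{prop:qfib}). The main obstacle is to show that these nonpolynomial pieces cancel. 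We would first try the structural argument, and then extend it to weights by approximating even polynomial weights through pendant gadgets. A cleaner route is to repeat the simplex-spline argument for the weighted measure $\omega\,d\mathrm{vol}$ on $\PP(X)$: multiplying by a polynomial in the coordinates keeps each simplex push-forward polynomial off the same knot hyperplanes.

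\emph{Part (b).} Here Theorem~\ref{thm:theta} gives $m_0^H(w)$ in terms of the $f_i$ and the $S_j$ on $\clo<w<\chii$. By Lemma~\ref{lem:chaindens}, $f_i$ is a single polynomial in $(u_s,u_t,w)$ on this range. By part (a), applied to $C_j+st$ with the virtual edge, $S_j$ is a single polynomial on $\Delta$, whose interior is exactly this range. Sums, products, and the $w$-derivative preserve polynomiality, so (b) follows.

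\emph{Part (c).} For an edge-closed node, $G=\int_{\clo}^{\chii}g\,dz$ with $g$ polynomial. On each of the four triangles, $\clo$ and $\chii$ are affine, so $G$ is polynomial there. For a theta, the formula for $G_H$ in Theorem~\ref{thm:theta} integrates polynomials between the same limits and evaluates $S_j$ at $\clo$. Both operations are polynomial on each triangle. Finally, $G$ is even in $u_s$ and in $u_t$, so the four-triangle statement is equivalent to the statement in $(|u_s|,|u_t|)$ on $|u_s|\ge|u_t|$ and on $|u_s|\le|u_t|$. This closes the induction.
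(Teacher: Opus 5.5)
Your arguments for (a) without weights and for (b) are the paper's own. In (a), $g$ is the push-forward of $\PP(X)$ to $(u_s,u_t,z_{st})$, every vertex lands on a vertex of $\Delta$, and Lemma~\ref{lem:pushforward} leaves only the facet planes as walls. Part (b) then follows from Theorem~\ref{thm:theta}, Lemma~\ref{lem:chaindens} with the induction hypothesis, and (a) applied to $C_j+st$.

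\emph{Vertex weights.} Here you part ways with the paper. The paper attaches to each weighted vertex the subgraph that generated its weight. The weighted state is then the unweighted fiber function of a larger $0/1$ polytope, so Lemma~\ref{lem:pushforward} applies verbatim, degree bound included; Proposition~\ref{prop:ops} later uses that bound. Your weighted simplex-spline route is more general: it covers arbitrary even polynomial weights, which is what the last sentence of the theorem literally says. However, you only assert the key fact. It is true, and one line proves it. On a simplex with barycentric coordinates $\lambda$, the measure $\lambda^\alpha\,dx$ is, up to a constant, the image of uniform measure on a higher-dimensional simplex in which the vertex $p_i$ is repeated $\alpha_i+1$ times. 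Its push-forward is therefore a simplex spline with the same knots, repeated, so the walls are unchanged. Drop the pendant-gadget approximation, since limits of polynomials of unbounded degree need not be polynomials. Drop the unfinished computational argument too; neither is needed.

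\emph{Part (c).} The paper applies Lemma~\ref{lem:pushforward} once more, now to the projection onto $(u_s,u_t)$. The vertices go to the corners of the square, so the only walls are its sides and diagonals. This treats every node at once, and switching at $t$ shows that each piece is even in $u_t$. You instead derive (c) from (a) and the mass formula of Theorem~\ref{thm:theta}, using that $\clo$ and $\chii$ are affine on each triangle. That is correct for edges, edge-closed P-nodes, and thetas, and it shows how the masses actually arise in the recursion.

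It skips chains, though, and chains are nodes too. A chain's mass is $Q_{\rm fib}$, and Proposition~\ref{prop:qfib} gives it only as a two-piece polynomial in $(|u_s|,|u_t|)$. Passing from that to the four-triangle form needs each piece to be even in the smaller variable. Evenness of $G$ alone does not give this, so your ``equivalence'' remark is justified only in the direction you actually use. The simplest repair is to run your push-forward argument on $(u_s,u_t)$ for chains, or for all nodes.
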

\begin{proof}
(a) Without weights, $g$ is the fiber function of the $0/1$ polytope $\PP(H)$ under the linear
map to $(x_s,x_t,z_{st})$. The images of the vertices are the four vertices of $\Delta$, so, by
Lemma~\ref{lem:pushforward}, the only possible walls are the facets of $\Delta$. With weights,
we apply the same argument to the polytope obtained by attaching to each weighted vertex the
series-parallel subgraph that generated its weight.

(c) The same argument applies to the map to $(x_s,x_t)$. The images are the corners of the
square, so the possible walls are its sides and diagonals. On the triangle
$\{u_s\ge|u_t|\}$, switching at $t$ implies that the polynomial is even in $u_t$\,. The other
triangles are similar.

(b) We argue by induction over the decomposition. For an edge-closed segment, $m_0=g$ is a
polynomial by (a). For a theta, the functions $S_j$ in Theorem~\ref{thm:theta} are states of
edge-closed pieces, hence polynomials by (a). The functions $f_i$ are densities of
$1-d_1(C_i)$, hence polynomials by Lemma~\ref{lem:chaindens} and the induction hypothesis.
Therefore, the formula of Theorem~\ref{thm:theta} is a single polynomial.
\end{proof}

% ---------------------------------------------------------------------------
\section{The algorithm}\label{sec:algo}

\subsection{2-connected graphs}
We evaluate the canonical decomposition from the bottom up.
\begin{itemize}[nosep]
\item We close each chain in an edge-closed P-node via~\eqref{eq:close}, using
Proposition~\ref{prop:qfib} for $Q_{\rm fib}$ and using Proposition~\ref{prop:box} with
Lemma~\ref{lem:chainswitch} for the two representative cut regions. The results for the
different chains multiply.
\item We evaluate each theta by Theorem~\ref{thm:theta}; we close its chains by a virtual
edge, and we obtain the functions $f_i$ from Lemma~\ref{lem:chaindens}.
\end{itemize}
We root the decomposition at an edge $st$. The root is then edge-closed, and
$\vol_z\PP(B)=\int_\Delta g\,du_s\,du_t\,dz$.

\subsection{Cut vertices}\label{sec:cutvertex}
Let $G$ be connected. We root its block-cut tree at a vertex $v^\ast$. For a block $B$ with
parent vertex $v$ and child vertices $c$, we define the \emph{vertex weight} $\omega_c$ as the
product of the marginals $\Phi$ of the blocks below $c$. The marginal of $B$ is
\[
\Phi_B(u_v)=\int g_B(u_v,u_w,z)\,\omega_w(u_w)\,du_w\,dz,
\]
where $w$ is a neighbor of $v$ in $B$, the decomposition of $B$ is rooted at $vw$, and every
vertex of $B$ other than $v$ and $w$ contributes its weight as an internal chain vertex
(Propositions~\ref{prop:box} and~\ref{prop:qfib}). Then
\[
\vol\PP(G)=2^{-m}\int_{-1/2}^{1/2}\prod_{B\ni v^\ast}\Phi_B(u)\,du.
\]
Each $\Phi_B$ is the fiber function of a $0/1$ polytope over a single coordinate, so, by
Lemma~\ref{lem:pushforward}, it is a single polynomial on $[-\tfrac12,\tfrac12]$; by switching,
it is even. The weights are even, so Lemma~\ref{lem:chainswitch} still applies.

If $B$ is a bridge $vw$, no decomposition is needed. Then $g_B\equiv1$ on $\Delta$, and
integrating out $z$ leaves the edge mass $1-2\max(|u_v|,|u_w|)$ of Lemma~\ref{lem:edge}. We fold
$u_w$ onto $[0,\tfrac12]$ and put $\Omega_w(t)=\int_0^t\omega_w(s)\,ds$. Because
$\tfrac{d}{dv}\,2\int_0^{1/2}(1-2\max(v,t))\,\omega_w(t)\,dt=-4\,\Omega_w(v)$ and the left-hand
side vanishes at $v=\tfrac12$, we obtain
\begin{equation}\label{eq:bridge}
\Phi_B(u_v)=4\int_{|u_v|}^{1/2}\Omega_w(t)\,dt .
\end{equation}
Because $\omega_w$ is even, $\Omega_w$ is odd, and~\eqref{eq:bridge} is an even polynomial in
$u_v$\,. A leaf $w$ has $\omega_w\equiv1$ and $\Phi_B(u_v)=\tfrac12-2u_v^2$\,; for a single edge
this gives $\vol\PP(K_2)=\tfrac12\int_{-1/2}^{1/2}(\tfrac12-2u^2)\,du=\tfrac16$\,.
If $G$ is disconnected, its coordinates split along the components, and the volumes of the
components multiply; an isolated vertex contributes the factor $1$.

\subsection{Complexity}

The cost of the algorithm is governed by the degrees of the polynomials that it forms, and these
are controlled by the sizes of the subgraphs that the polynomials summarize, not merely by $d$.
Call a P-node \emph{nested} if it is a segment of a chain, and let
\[
\nu=\#\{\text{nested P-nodes}\},\qquad
\tau=\#\{\text{chains that are children of thetas}\}.
\]
For a P-node $P$ with terminals $s$ and $t$, let $|P|$ be the number of coordinates of the
subgraph spanned by $P$, together with those of the subgraphs that generate the vertex weights at
its vertices other than $s$ and $t$, but excluding $u_s$ and $u_t$\,. Let $\Pi$ be the set of
P-nodes that are nested or have at least two chain children.

\begin{lemma}\label{lem:nutau}
Let $G$ be connected and series-parallel, with $\mu=m-n+1$. Then $|\Pi|\le\mu$, $\nu\le\mu$, and
$\tau\le2\mu$. Moreover, $\Pi=\emptyset$ if and only if $G$ is a cactus, and in that case
$\nu=\tau=0$.
\end{lemma}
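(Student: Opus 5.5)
The plan is a charging argument on the cyclomatic number $\mu=m-n+1$, which is additive over blocks. Since $G$ is connected, $\mu=\sum_B\mu(B)$, where the sum is over the blocks. A bridge has $\mu(B)=0$ and contributes no P-nodes, so it suffices to prove the three inequalities for each 2-connected block $B$, with its canonical decomposition rooted at an edge $st$, and then sum. Inside such a block I would compute $\mu(B)$ from the decomposition. A chain with $k$ segments introduces $k-1$ internal vertices. A P-node with $c$ children, counting the edge $st$ if present, merges $c$ two-terminal pieces between the same terminals. The standard count is that each parallel composition of $c$ pieces raises $\mu$ by $c-1$, while series compositions leave $\mu$ unchanged. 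Finally, the root edge $st$ closes the outermost structure.

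Because the decomposition is rooted at the edge $st$, the root P-node is edge-closed: its children are $st$ and at least one chain. So $\mu(B)=\sum_P(c_P-1)$ over all P-nodes $P$, where $c_P$ counts chain children plus the edge if present. The count is consistent on a single cycle: the root has $c=2$ (the edge and one chain), giving $\mu=1$.

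From this identity the three bounds follow by charging.
\begin{itemize}
\item For $|\Pi|$: every nested P-node has at least two children. Indeed, a nested P-node with a single child would be either a lone edge or a lone chain, and the canonical form forbids both, since a single edge is used directly and a chain inside a chain would be merged. Hence $c_P-1\ge1$ for every nested P-node. Every P-node with at least two chain children also has $c_P-1\ge1$. Therefore $|\Pi|\le\sum_P(c_P-1)=\mu$, and $\nu\le|\Pi|\le\mu$.
\item For $\tau$: a theta has no edge $st$ and at least two chains, so its number of chains $c_P$ satisfies $c_P\le 2(c_P-1)$. Summing over thetas gives $\tau\le2\sum_P(c_P-1)\le2\mu$.
\end{itemize}

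For the cactus characterization, I would show that $\Pi=\emptyset$ exactly when every P-node is the root-type node with one chain and the edge. Suppose $\Pi=\emptyset$. Then there are no nested P-nodes, so every chain consists of edges only. There are also no P-nodes with two chains, so the only P-node is the root, consisting of the edge $st$ and one chain of edges. Hence $B$ is a cycle. Conversely, if every block is a cycle or an edge, this is exactly the decomposition, and $\Pi=\emptyset$. Finally, $\nu\le|\Pi|$ gives $\nu=0$, and every theta lies in $\Pi$ (it has at least two chains), so $\tau=0$.

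The step I expect to need the most care is justifying $\mu(B)=\sum_P(c_P-1)$ together with the claim that nested P-nodes have $c_P\ge2$ in the canonical form. I would prove the identity by induction on the decomposition using $\mu=m-n+1$ for each two-terminal piece with its terminals counted once. Under series composition of $k$ pieces, the values $m-n$ add and $k-1$ shared vertices are removed. Under parallel composition of $c$ pieces, $2(c-1)$ terminal vertices are removed, so $\mu$ rises by $c-1$.
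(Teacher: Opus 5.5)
Your proof is correct and follows essentially the paper's argument. Both count the independent cycles each P-node contributes: $c_P-1\ge1$ in your notation, ``$c$ or $c-1$'' in the paper's. Both bound $\tau$ by the same count over thetas with $r\ge2$ chains, and both characterize cacti by noting that $\Pi=\emptyset$ leaves each block's decomposition as a single root P-node with one chain of edges. Your induction on $m-n+1$ through series and parallel steps only makes explicit the identity $\mu=\sum_P(c_P-1)$, which the paper states as the independence of the cycle spaces of distinct P-nodes.
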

\begin{proof}
A P-node with $c$ chain children, and possibly the terminal edge, contributes $c$ or $c-1$
independent cycles, in either case at least one, and the cycle spaces of distinct P-nodes are
independent. Hence the total number of P-nodes is at most $\mu$, and so are $|\Pi|$ and $\nu$.

We root each block at one of its edges, so the root P-node of a block contains that edge and is
edge-closed; every other P-node, and in particular every theta, is nested. So the number of
thetas is at most $\nu$. A theta with $r$ chains contributes $r-1$ independent cycles, so
$\tau=\sum_\theta r_\theta\le\mu+\#\{\text{thetas}\}\le2\mu$.

If $G$ is a cactus, then each block is an edge or a cycle; the decomposition of a cycle rooted at
one of its edges is a single P-node whose children are that edge and one chain of edges, so
$\Pi=\emptyset$ and $\nu=\tau=0$. Conversely, if $\Pi=\emptyset$, then no P-node is nested, so the
decomposition of each 2-connected block is its root P-node alone; that P-node has a single chain,
whose segments are edges, so the block is a cycle.
\end{proof}

The condition $\nu=\tau=0$ alone does not characterize cacti. Two triangles that share an edge
$st$, decomposed with root $st$, give a root P-node with the edge $st$ and two chains of length
two; here $\nu=\tau=0$, but the root P-node lies in $\Pi$.

\begin{proposition}\label{prop:ops}
The algorithm computes $\vol\PP(G)$ with $O(d^2)$ arithmetic operations on rationals if $G$ is a
forest, and with
\[
O\Bigl(d^5+\min\Bigl\{d^7,\ d^3\sum_{P\in\Pi}|P|^4\Bigr\}\Bigr)
\]
arithmetic operations otherwise. In particular, it uses $O(d^5)$ operations when $G$ is a cactus,
and $O(d^7)$ operations in general.
\end{proposition}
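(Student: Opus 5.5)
We will count operations node by node, charging each polynomial operation to the node of the decomposition (or of the block-cut tree) where it is performed. We represent every state as a polynomial with rational coefficients: an edge-closed state $g$ on $\Delta$ in three variables (Theorem~\ref{thm:allpoly}(a)), a theta density $m_0$ in three variables (Theorem~\ref{thm:allpoly}(b)), a mass $G$ as two polynomials in $(|u_s|,|u_t|)$ (Theorem~\ref{thm:allpoly}(c)), and each marginal $\Phi_B$ or weight $\omega_c$ as an even univariate polynomial. The first step is a degree bound: each of these is a fiber function of a polytope whose dimension is the number of coordinates it summarizes, so by Lemma~\ref{lem:pushforward} its degree is at most $|P|$ (or at most $d$ in every case). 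A trivariate polynomial of degree $D$ has $O(D^3)$ coefficients, so multiplying two of them costs $O(D^6)$ naively, and integrating one variable against an affine limit costs $O(D^4)$. For a forest only univariate work occurs: \eqref{eq:bridge} is one antiderivative and one evaluation per bridge, with degrees at most $d$, and the product of the weights at a vertex costs $O(d^2)$ in total if we multiply in incrementally and charge each product to the degree it adds. This gives the $O(d^2)$ bound.

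Next we bound the work that is present even for cacti. Closing a chain whose segments are all edges uses Proposition~\ref{prop:qfib} for $Q_{\rm fib}$ and Proposition~\ref{prop:box} for the two representative cut regions of Lemma~\ref{lem:chainswitch}. In Proposition~\ref{prop:box} each $F_j$ is a polynomial in the four variables $(a_j,b_j,u_s,u_t)$ of degree $O(d)$. The $j$-th step multiplies $F_{j-1}$ by the weight $\omega_{j-1}$ and then integrates over the quadrant $p_0\le p_{j-1}\le p_j$. Iterated antiderivatives in $a$ and in $b$, followed by substitution of the affine limits, cost $O(d\cdot d^4)$ per step if done term by term. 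A better count is $O(d^4)$ per step, by noting that $F_j$ is really a polynomial in the three quantities $p_j$ and $u_s$, with $u_t$ entering only at the end. Summed over at most $d$ internal vertices, the cut-vertex weights included, this gives $O(d^5)$. Proposition~\ref{prop:qfib} is cheaper, at $O(d^3)$ per step since it is bivariate in $(v,v_s)$. The marginals $\Phi_B$ and the final integral are univariate or bivariate and cost $O(d^3)$.

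For P-nodes in $\Pi$ the new ingredients are Theorem~\ref{thm:theta} and segments with nontrivial densities $W_j$ in Proposition~\ref{prop:box}. In Theorem~\ref{thm:theta}, the functions $S_i$ and $f_i$ are trivariate polynomials of degree $O(|P|)$. Forming the products $\prod_{j\ne i}S_j$ by prefix and suffix products costs $O(r)$ trivariate multiplications of cost $O(|P|^6)$ each; instead, evaluating these products in $w$ while keeping $(u_s,u_t)$ symbolic, or multiplying by interpolation, should bring each multiplication down to $O(|P|^4)$ up to the factor $d^3$ accounting for the remaining variables and chain lengths. Using $W_j$ inside the chain convolution likewise costs $O(d^3|P|^4)$ per nested P-node. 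Summing over $\Pi$ gives $d^3\sum_{P\in\Pi}|P|^4$. Since $|P|\le d$ and $|\Pi|\le\mu\le d$ by Lemma~\ref{lem:nutau}, this is at most $d^7$, which gives the minimum in the bound. For a cactus, $\Pi=\emptyset$ by Lemma~\ref{lem:nutau}, so only the $O(d^5)$ term remains.

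I expect the main obstacle to be the precise bookkeeping of the polynomial multiplications in the theta and nested-segment steps. Done naively, a product of trivariate polynomials of degree $|P|$ costs $|P|^6$, and the total would exceed $d^7$. The saving I would try first is to keep one variable (the length $w$, or $u_t$) as an evaluation or interpolation grid of $O(d)$ points. Every step is then a bivariate operation of cost $O(|P|^4)$ at each point, and the $O(d)$ grid points together with the chain lengths supply the factor $d^3$.
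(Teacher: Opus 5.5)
There is a genuine gap: your argument does not establish the general $O(d^7)$ bound. From $|P|\le d$ and $|\Pi|\le\mu\le d$ you conclude $d^3\sum_{P\in\Pi}|P|^4\le d^7$, but these give only $d^3\cdot d\cdot d^4=d^8$. The sum really is that large for some graphs. Build a tower of thetas, each obtained by putting the previous one in series with an edge and in parallel with a path of length two, and close it at the top by an edge. Every theta is nested, $|P|$ grows linearly along the tower, and $\sum_{P\in\Pi}|P|^4=\Theta(d^5)$.

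The alternative $d^7$ in the minimum is a separate estimate, and it needs an idea your proposal lacks: an amortization over \emph{supports}. Assign to every polynomial the set of coordinates that it summarizes. Show by induction that its degree is at most the size of its support: products add supports and degrees, each integration consumes one coordinate and raises the degree by one, and derivatives and substitutions do not raise it. This covers the $F_j$, the densities, and all partial products, not only the fiber functions to which Lemma~\ref{lem:pushforward} applies. Then observe that every product multiplies polynomials with disjoint supports $X$ and $Y$, and that $\sum_{\text{products}}|X|\,|Y|\le3\binom d2$, because two coordinates are separated only at the node where they first meet, and there at most three times. A chain step at a nested P-node then costs at most $(2|X|)^3(2|P|)^4$, which is both $O(d^3|P|^4)$ and $O(d^5|X|\,|P|)$. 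Summing the first form gives $d^3\sum_{P\in\Pi}|P|^4$, and summing the second with the merging inequality gives $O(d^7)$.

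The same bookkeeping fixes two further steps. In the chain steps at edges you cost only the integration, not the product $F_{j-1}\,\omega_{j-1}(\tilde u_{j-1})$, which naively pairs $O(d^3)$ terms with $O(d^2)$ terms at every step. It is exactly this product, bounded by $(2|X|)^3(2|Y|)^2\le32\,d^3|X|\,|Y|$ and summed by merging, that produces the $O(d^5)$ term for cacti. A Horner scheme in $a_{j-1}-b_{j-1}$ would also give your $O(d^4)$ per step, but it has to be said.

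Conversely, the theta step that you single out as the main obstacle is not one, and no evaluation grid or interpolation is needed. Measured by supports, each product there costs $(2|X|)^3(2|C_i|)^3$ with $|X|\le|P|$. So a P-node with chains $C_1,\dots,C_r$ costs $O(|P|^3\sum_i|C_i|^3)=O(|P|^6)=O(d^2|P|^4)$, and all of them together cost $O(d^6)$ by merging. Even your crude count of $O(r)$ products of cost $O(|P|^6)$ at each P-node fits both forms of the bound. Indeed, $r\le|P|$ gives $|P|^7\le d^3|P|^4$, and the chain children of all P-nodes number $O(\mu)$. As written, the claim that interpolation ``should bring'' the cost down is not an argument.
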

\begin{proof}
We treat each component separately; multiplying their volumes costs $O(n)$.

\emph{Supports and degrees.} To every polynomial that the algorithm forms, we assign its
\emph{support}: the set of coordinates of $G$ that it summarizes, that is, the coordinates of the
corresponding subgraph together with those of the subgraphs that generated the vertex weights it
has absorbed. For a vertex weight $\omega_c$\,, a block marginal, or a function in the state of a
two-terminal piece, we exclude the coordinates of the vertices at which it is evaluated;
thus the functions in the state of a P-node $P$ have support of size $|P|$. Every polynomial
has degree at most the size of its support. For $g_H$ and $G_H$ this is the bound of
Lemma~\ref{lem:pushforward}, as in the proof of Theorem~\ref{thm:allpoly}; in general it follows
by induction over the recursion, because a product adds degrees and supports, each integration
raises the degree by one and consumes one coordinate (an internal vertex, an edge, or the
argument of a density, whose degree is one less than the size of its support), and derivatives
and substitutions do not raise the degree. A polynomial in $v$ variables whose support has $k\ge1$
elements therefore has at most $\binom{k+v}{v}\le(2k)^v$ terms. As before, a product costs at
most the product of the numbers of terms, and an antiderivative, a substitution of the limits of
Proposition~\ref{prop:box}, or a derivative costs a number of operations linear in the number of
terms.

\emph{Merging.} Every product that the algorithm forms multiplies two polynomials with disjoint
supports $X$ and $Y$, and we claim that
\begin{equation}\label{eq:merge}
\sum_{\text{products}}|X|\,|Y|\le3\binom d2.
\end{equation}
The left-hand side counts, for each product, the pairs of coordinates that it separates. Combine
the block-cut tree and the decompositions of the blocks into a single tree, in which the
subgraphs that generate the weights at the vertices of a chain hang below that chain. Two
coordinates are separated only at the node where they first meet, and there at most three times:
once in each of the three passes over a chain (the two representative cut regions of
Lemma~\ref{lem:chainswitch} and the $\QQ$-fiber transfer), in each of the three products of the
theta recursion below, and once in the remaining products.

\emph{Univariate operations.} These occur for every graph. For a bridge, \eqref{eq:bridge} costs
$O(d)$, so all bridges cost $O(n\,d)$. Each vertex weight, and the integrand at the root, is a
product of univariate block marginals, so by~\eqref{eq:merge} these products cost
$\sum4|X|\,|Y|=O(d^2)$. The final integral over $u$ costs $O(d)$. If $G$ is a forest, nothing
else occurs, so the total is $O(d^2)$.

\emph{Chain steps at edges.} Consider step $j$ of the box recursion of
Proposition~\ref{prop:box}, with $\sigma_j$ an edge, and let $X$ be the support of $F_{j-1}$ and
$Y$ that of $\omega_{j-1}$\,. If $Y\ne\emptyset$, the composition $\omega_{j-1}(\tilde u_{j-1})$
with $\tilde u_{j-1}=(a_{j-1}-b_{j-1})/2$ is a bivariate polynomial with at most $(2|Y|)^2$ terms,
which we obtain by binomial expansion in $O(|Y|^2)=O(d^2)$ operations. The product with $F_{j-1}$
costs at most $(2|X|)^3(2|Y|)^2\le32\,d^3|X|\,|Y|$, so by~\eqref{eq:merge} all these products
together cost $O(d^5)$. The compositions cost $O(n\,d^2)=O(d^3)$. Integrating out $a_{j-1}$ and
$b_{j-1}$ costs $O(d^3)$ per step, and there are at most $3m$ steps, so this costs $O(d^4)$. The
$\QQ$-fiber transfer of Proposition~\ref{prop:qfib} multiplies two-piece bivariate polynomials,
at cost $O(d^2|X|\,|Y|)$ per product, hence $O(d^4)$ by~\eqref{eq:merge}, and integrates trivariate
ones, at cost $O(d^3)$ per step, hence $O(d^4)$. For a 2-connected block, multiplying $g_B$ by
$\omega_w$ costs $O(d^2|X|\,|Y|)$, and integrating out $(u_w,z)$ costs $O(d^3)$; over all blocks
this is $O(d^4)$. Hence, if $G$ has a cycle, everything except the terms below costs $O(d^5)$.

\emph{Chain steps at P-nodes.} Now let $\sigma_j=P$ be a P-node, and let $X$ be the support of
$F_{j-1}\,\omega_{j-1}$\,. Then $W_j$ is the density $m_0$ of $P$ evaluated at $u_{v_{j-1}}$\,,
$u_{v_j}$\,, and $w_j$\,, which are affine in $(a_{j-1},b_{j-1},a_j,b_j)$, and it has at most
$(2|P|)^4$ terms. Horner's scheme, applied to one pair of variables at a time, computes this
composition in $O(|P|^5)$ operations. The product with $F_{j-1}\,\omega_{j-1}$ costs at most
$(2|X|)^3(2|P|)^4$, which is $O(d^5|X|\,|P|)$ and also $O(d^3|P|^4)$. The product has degree at
most $|P|$ in $(a_j,b_j)$, so it has $O(d^3|P|^2)$ terms, and integrating out $a_{j-1}$ and
$b_{j-1}$ costs $O(d^3|P|^2)$. Summing over the at most $3\nu$ such steps, and using~\eqref{eq:merge}
for the products and $|P|\le d$ and $\nu\le d$ for the rest, we obtain
$O\bigl(\min\{d^7,\ d^3\sum_{P\in\Pi}|P|^4\}\bigr)$.

\emph{P-nodes with several chains.} Let $P$ have chains $C_1,\dots,C_r$ with $r\ge2$. If $P$ is
edge-closed, its state is the product of the states of the closed chains. If $P$ is a theta, we
evaluate Theorem~\ref{thm:theta} through the recursion
\[
\Gamma_1=S_1\,,\quad \Sigma_1=f_1\,,\qquad \Gamma_{i+1}=\Gamma_i\,S_{i+1}\,,\quad \Sigma_{i+1}=\Sigma_i\,S_{i+1}+\Gamma_i\,f_{i+1}\,,
\]
so that $\Sigma_r=\sum_if_i\prod_{j\ne i}S_j$\,, $m_0=\Sigma_r-\partial_w\Gamma_r$\,, and
$G_P=\Gamma_r(\clo)+\int_{\clo}^{\chii}\Sigma_r(q)\,dq$. In either case each product multiplies a
trivariate polynomial whose support $X$ lies in that of $P$ by one whose support is that of a
chain $C_i$\,, at cost at most $(2|X|)^3(2|C_i|)^3$, which is $O(d^4|X|\,|C_i|)$ and also
$O(|P|^3|C_i|^3)$. By~\eqref{eq:merge}, all these products cost $O(d^6)$. At a single P-node they
cost $O(|P|^3\sum_i|C_i|^3)=O(|P|^6)=O(d^2|P|^4)$; the derivative and the integral cost
$O(|P|^3)$. Every such P-node lies in $\Pi$, so the total is
$O\bigl(\min\{d^6,\ d^3\sum_{P\in\Pi}|P|^4\}\bigr)$.

Adding the contributions proves the bound. For a cactus, $\Pi=\emptyset$ by
Lemma~\ref{lem:nutau}, which leaves $O(d^5)$; in general, the minimum is at most $d^7$.
\end{proof}

\begin{remark*}
The bound depends on the pieces in $\Pi$ through their sizes, not their number, and it
degenerates correctly at both ends. For a forest there are no cycles, hence no chains and no cut
regions: the recursion reduces to the bridge transfer~\eqref{eq:bridge} and to products of
univariate marginals, and costs $O(d^2)$. Because $\QQ(G)=\PP(G)$ for a forest, the same bound
holds for $\vol\QQ$\,. For a cactus, only the chain steps at edges remain, and they cost $O(d^5)$.
Pieces of bounded size are inexpensive even when there are many of them: take a series
composition of $k$ copies of the $4$-cycle, and close it by an edge joining its ends; each copy is
a theta with two chains of length two. Then $\nu=k$, $\mu=k+1$, and $m=4k+1$, every $P\in\Pi$ has $|P|\le6$, and the bound is
$O(d^5+k\,d^3)=O(d^5)$. The term $d^7$ arises only from a single chain step in which both the
prefix and the P-node segment have support of size $\Theta(d)$. For example, close by an edge a
chain that consists of a path followed by a theta of two paths, all three of length $\Theta(d)$.
With schoolbook multiplication, the exponent $7$ therefore cannot be improved by this accounting
alone. The table collects the worst-case bounds; the refined forms, in terms of the sizes of
the pieces in $\Pi$, are those of Propositions~\ref{prop:ops} and~\ref{prop:qcost}.

\begin{center}\small
\begin{tabular}{lcll}
\toprule
$G$ & $\Pi$ & $\vol\PP$ & $\vol\QQ$\\
\midrule
forest & $\emptyset$ & $O(d^2)$ & $O(d^2)$\\
cactus & $\emptyset$ & $O(d^5)$ & $O(d^3)$\\
series-parallel & $|\Pi|\le\mu$ & $O(d^7)$ & $O(d^4)$\\
\bottomrule
\end{tabular}
\end{center}

\noindent For $\vol\QQ$ the structure of the graph matters less. Its states are bivariate whatever
the graph, so the exponent drops by one from series-parallel graphs to cacti, rather than by two:
in a cactus every segment is an edge, and multiplying by an edge state is inexpensive; see
Proposition~\ref{prop:qcost}.
\end{remark*}

\begin{proposition}\label{prop:bits}
Every rational number that the algorithm produces has bit length $O(d^2)$.
\end{proposition}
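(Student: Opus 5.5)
Every intermediate polynomial is the fiber function of a $0/1$ polytope, possibly restricted to a cut region, in which all but a few coordinates have been integrated out. Such functions have small common denominators and bounded coefficients, so I would bound these two quantities separately for every polynomial the algorithm forms.

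\emph{Denominators.} By the degree bookkeeping in the proof of Proposition~\ref{prop:ops}, every polynomial has degree at most the size $k\le d$ of its support. I claim that, after clearing a factor $k!\,2^{ck}$ for a fixed constant $c$, every such polynomial has integer coefficients. The operations are products (denominators multiply, and supports add), antiderivatives (dividing by the exponent, at most $k$), substitutions of the affine limits of Proposition~\ref{prop:box}, which involve only halves, and derivatives (no new denominators). Each antiderivative consumes one coordinate of the support, and the multinomial identity $a!\,b!\mid(a+b)!$ absorbs merged denominators. So an induction over the recursion, with hypothesis ``$k!\,2^{ck}\cdot$(polynomial) is integral'', closes. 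The only subtle point is that the antiderivative of $x^j$ produces $1/(j+1)$, and $j+1$ is at most the degree after integration, hence at most the new support size. The induction works only if it tracks the actual exponents: the claim is stated in terms of the degree, which grows by one per integration. The bit length of the denominator is then $O(d\log d)$.

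\emph{Numerators.} I would bound the coefficient size directly. A polynomial in $v\le4$ variables of degree $k$ on a domain inside $[-1,1]^4$ has absolute values bounded by $1$, since it is a fiber volume of a subset of a unit cube, up to the factor $2^{O(k)}$ from cut coordinates and the $2^{k-1}$ of Lemma~\ref{lem:chainswitch}. The domain contains a ball of radius $\Omega(1)$. Markov-type inequalities, or interpolation at a grid of $O(k^v)$ rational points, then bound each coefficient by $2^{O(k\log k)}$. Combined with the denominator bound, every numerator has bit length $O(d\log d)$.

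\emph{Intermediate values.} These need care because they are not themselves fiber functions: the sums $\Sigma_i$ in the theta recursion, the partial products $F_j\,\omega_j$, and the compositions with $(a-b)/2$. Each of these is still a difference or sum of at most $2^{O(d)}$ fiber-type quantities on the same support, or is obtained from one by an affine change of variables with constant entries. Either way, the coefficients grow by at most $2^{O(k\log k)}$.

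This gives $O(d\log d)$, comfortably within the claimed $O(d^2)$. If the Markov step is fragile, I would fall back on the cruder direct recursion, which allows a crude $\log$ growth factor per step. That factor comes from the number of terms and the binomial expansions, each step adds $O(d)$ bits, and with $O(d)$ steps along any root-to-leaf path this again gives $O(d^2)$.

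The main obstacle is controlling coefficients, rather than values, through the polynomial products. Cancellation can hide large coefficients, so the argument must either use the norm-equivalence argument above on fixed, nondegenerate domains (noting that $\Delta$ and the triangles of Theorem~\ref{thm:allpoly} have $\Omega(1)$ inradius) or fall back on the additive $O(d)$-bits-per-level accounting. The fallback is the version I would write out, since it matches the stated $O(d^2)$.
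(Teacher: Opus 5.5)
Your fallback, which is the version you say you would write out, does not handle products correctly. Counting $O(d)$ new bits per step along a root-to-leaf path assumes that a node's bit length is at most the maximum over its children plus $O(d)$. That fails for products. The denominator of $pq$ can be as large as $\den(p)\den(q)$ and its norm as large as $\|p\|_1\|q\|_1$, so sizes add across children.

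What is needed has three parts:
\begin{itemize}
\item a measure that is subadditive under products, such as $\lambda(p)=\log\den(p)+\log\|p\|_1$;
\item a check that, in every product term formed at a node, each child contributes at most one factor;
\item a sum of the $O(d)$ increments over all $O(m)$ polynomial operations in the tree, not over a path.
\end{itemize}
The one-factor check holds here: in the box step, $F_{j-1}$, $\omega_{j-1}$, and $W_j$ each occur once; in the theta recursion, $f_i$ and $S_i$ never occur in the same term; the three passes over a chain are added, and the closed chains of an edge-closed P-node are multiplied over distinct children. This check is the heart of the proof, and your proposal never makes it. A polynomial squared at every level would double its bit length per level and blow up exponentially. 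Your final $O(d^2)$ matches the correct count only because the depth and the number of operations are both $O(d)$.

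The per-operation increment of $O(d)$ also needs justification at antiderivatives. Clearing denominators with $(D+1)!$ costs $\Theta(D\log D)$ bits per step and gives only $O(d^2\log d)$. You need $\operatorname{lcm}(1,\dots,D+1)<3^{D+1}$ (Hanson's bound). You also need the fact that the substituted affine forms have coefficients in $\{0,\pm\tfrac12,\pm1,\pm2\}$, so a substitution multiplies $\den$ by at most $2^D$ and $\|\cdot\|_1$ by at most $8^D$.

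Your primary route does not close either. The invariant ``$k!\,2^{ck}p$ is integral'' fails at an antiderivative: the new factor $1/(j+1)$ need not divide $(k+1)!/k!=k+1$ (take $j+1=3$, $k+1=4$). You noticed this but did not repair it. A fixed $c$ is also not justified. Each substitution of $\tilde u=(a-b)/2$ or of an integration limit into a polynomial of degree $D$ can introduce a factor $2^{-D}$, and these substitutions are nested through the recursion. Using value bounds plus Markov-type inequalities to control numerators is a genuinely different tool from the paper's, and it might give $O(d\log d)$ for the true fiber functions. It only works, though, once paired with a correct denominator bound, for instance one derived from the lattice structure of the fibers rather than operation by operation. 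As written, the $O(d\log d)$ claim is unproved.
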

\begin{proof}
For a polynomial $p$, let $\den(p)$ be the least positive integer such that $\den(p)\,p$ has
integer coefficients, and let $\|p\|_1$ be the sum of the absolute values of the coefficients
of $p$. It suffices to bound $\lambda(p)=\log\den(p)+\log\|p\|_1$\,.

Only the following operations occur, on polynomials of degree $D\le d$ and with affine forms
$L$ that have coefficients in $\{0,\pm\tfrac12,\pm1,\pm2\}$ and satisfy $\|L\|_1\le8$:
\begin{itemize}[nosep]
\item products, with $\den(pq)\mid\den(p)\den(q)$ and $\|pq\|_1\le\|p\|_1\|q\|_1$\,;
\item substitutions $x\mapsto L$, which multiply $\den$ by at most $2^D$ and $\|\cdot\|_1$ by at
most $8^D$;
\item antiderivatives, which multiply $\den$ by at most
$\operatorname{lcm}(1,\dots,D+1)<3^{D+1}$ (see~\cite{Hanson72}) and do not increase $\|\cdot\|_1$\,;
\item definite integrals, which are antiderivatives followed by two substitutions;
\item derivatives, and multiplication by the constants $2^{\pm1}$ and $2^{k-1}$.
\end{itemize}
Here an \emph{operation} is one of the polynomial operations just listed, not one of the
coefficient operations that Proposition~\ref{prop:ops} counts; one polynomial operation may use
many coefficient operations, and the algorithm performs $O(m)$ polynomial operations in total.
Each operation other than a product adds $O(d)$ to $\lambda$. Within a product term at a node,
each child contributes at most one factor: in the theta formula, $f_i$ and $S_i$ never occur
together. A sum of $r$ terms adds at most $\log r$. Hence
\[
\lambda(\text{node})\le\sum_{\text{children}}\lambda(\text{child})+O(d)\cdot\#\{\text{operations at the node}\}+O(\log d).
\]
Unrolling this recursion over the tree, we obtain $\lambda=O(d)\cdot O(m)=O(d^2)$ for every
intermediate object.
\end{proof}

We can now state and prove the main result.

\begin{theorem}\label{thm:main}
Let $G$ be a series-parallel graph with $n$ vertices and $m$ edges, and let $d=n+m$. Then
$\vol\PP(G)$ can be computed exactly with $O(d^7)$ arithmetic operations on rationals of bit
length $O(d^2)$, and with $O(d^5)$ arithmetic operations if $G$ is a cactus. In particular,
$\vol\PP(G)$ can be computed in time polynomial in $d$.
\end{theorem}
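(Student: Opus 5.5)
The theorem assembles results already proved, so the plan is to verify correctness of the algorithm and then cite the cost and bit-length bounds. First, I reduce to connected $G$: the coordinates split over components, volumes multiply, and isolated vertices contribute $1$, as noted at the end of Section~\ref{sec:cutvertex}. For connected $G$, I compute the block-cut tree and, for every $2$-connected block, a canonical decomposition rooted at an edge (linear time by~\cite{VTL82}). Correctness comes from Theorem~\ref{thm:padberg}: $\PP(G)$ is $\QQ(G)$ cut by the odd-cycle inequalities, and by Lemma~\ref{lem:oc} these are exactly the statements that every odd labelling of every cycle has length at least $1$. Every cycle of $G$ lies in one block, and in the decomposition of a block it is first created either when a chain is closed by the edge $st$ of an edge-closed P-node or when two chains of a theta are combined. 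So the cut-region formula~\eqref{eq:close} together with Theorem~\ref{thm:chain}(a) and Lemma~\ref{lem:chainswitch}, the remark that closed chains of an edge-closed P-node multiply, and Theorem~\ref{thm:theta} impose every odd-cycle condition exactly once. By induction up the decomposition, each state is then the correct fiber function of the feasible configurations.

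Next I check that the recursion is finite polynomial arithmetic. Theorem~\ref{thm:allpoly} and Lemma~\ref{lem:chaindens} say every state is a single polynomial on its fixed domain ($\Delta$, the wedge $\clo<w<\chii$, or the four triangles for masses). Theorem~\ref{thm:chain}(c) guarantees that parents evaluate densities only on these domains, so Proposition~\ref{prop:box} and Proposition~\ref{prop:qfib} produce polynomials (two-piece in the latter case). Cut vertices are handled by the even weights $\omega_c$, the block marginals $\Phi_B$, and the bridge formula~\eqref{eq:bridge}; the final answer is $\vol\PP(G)=2^{-m}\int\prod_{B\ni v^\ast}\Phi_B$, converted back through~\eqref{eq:jac}. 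Theorem~\ref{thm:allpoly} explicitly allows even polynomial vertex weights, so weighting does not break polynomiality. Evenness of the weights is what keeps Lemma~\ref{lem:chainswitch} valid, and I will check it at each use.

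Finally, the operation count is Proposition~\ref{prop:ops}: $O(d^7)$ in general, and $O(d^5)$ for a cactus since $\Pi=\emptyset$ there by Lemma~\ref{lem:nutau}. The bit-length bound $O(d^2)$ is Proposition~\ref{prop:bits}. Arithmetic on rationals of bit length $O(d^2)$ costs time polynomial in $d$, and $O(d^7)$ such operations is therefore polynomial time.

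The main obstacle is the correctness bookkeeping rather than the cost. One must make sure no cycle's inequalities are imposed twice or omitted. One must also handle the virtual edge in thetas: it is introduced only to define $S_i$ and must not add constraints. Both points are settled by Lemma~\ref{lem:parity}(c) and by the reduction, in the proof of Theorem~\ref{thm:theta}, of all cross-chain conditions to $\max_i\ell_i\le\min_j r_j$.
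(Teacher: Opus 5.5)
Your proposal is correct and follows the paper's own proof. Both decompose $G$ into blocks and canonical series-parallel decompositions, obtain correctness from Theorems~\ref{thm:padberg}, \ref{thm:chain}, and~\ref{thm:theta}, and take the operation count and bit-length bounds from Propositions~\ref{prop:ops} and~\ref{prop:bits}; you spell out the supporting bookkeeping more fully than the paper does.

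One small imprecision: new cycles also arise through two chains of an edge-closed P-node. As your appeal to the multiplication remark implicitly uses, these are implied by the closed-chain conditions via Lemma~\ref{lem:parity}(c), so nothing is lost.
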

\begin{proof}
We decompose $G$ into blocks, and each block into its canonical series-parallel decomposition,
in linear time (see~\cite{VTL82}). We then run the algorithm above. Correctness follows from
Theorems~\ref{thm:padberg}, \ref{thm:chain}, and~\ref{thm:theta}. The bounds on
the number of operations and on bit lengths are those of Propositions~\ref{prop:ops}
and~\ref{prop:bits}; together they give a running time polynomial in $n+m$.
\end{proof}

\begin{remark}
Cactus forests, which~\cite[Section~8]{LeeSkipper20} singles out as a first step, form the simplest case. For cacti, every P-node is either a cycle closed by an edge or a
vertex joint. The recursion then reduces to one-dimensional transfers with vertex weights, and,
for cycles, it reproduces the closed forms of~\cite[Theorems~13 and~19]{LeeSkipper20}. For an
unweighted cycle, the transfer of Proposition~\ref{prop:qfib} is an integral operator whose
spectrum we compute in Lemma~\ref{lem:cycmarg}; this gives a second derivation of
$\vol\QQ(C_\ell)$.
\end{remark}

\subsection{Volumes of \texorpdfstring{$\QQ$}{Q}}\label{sec:volQ}
The relaxation $\QQ$ imposes no cycle conditions, so the dynamic program simplifies
considerably. In~\cite[Theorem~4]{LeeSkipper20}, the computation of $\vol\QQ(G)$ is reduced
to counting the linear extensions of the incidence poset of $G$, and, for graphs of bounded
treewidth, the algorithm of~\cite{KHNK16} is applied. That algorithm counts the linear extensions of an $n$-element poset
whose cover graph has treewidth $t$ with $O(n^{t+4})$ arithmetic operations: it applies
inclusion-exclusion over $k=1,\dots,n$ and, for each $k$, variable elimination at cost
$O(n^2k^{t+1})$. For a series-parallel graph, the incidence poset has $d$ elements, and its cover
graph, which is $G$ with every edge subdivided, has treewidth at most
$2$ (see~\cite[Section~2]{LeeSkipper20}); this route therefore uses $O(d^6)$ arithmetic operations.

In general, the dependence of the exponent on $t$ cannot be removed: \cite{EGKO16} showed that counting linear extensions is not fixed-parameter tractable
when parameterized by the treewidth of the cover graph, unless $\mathrm{W}[1]=\mathrm{FPT}$. Their
construction uses posets with long chains, however, so it does not bear on incidence posets,
which have height two. The polynomial-time algorithm of~\cite{Mohring89} for
series-parallel posets does not apply either: series-parallel posets are N-free, whereas the
incidence poset of a path with edges $ij$ and $jk$ already contains an induced N, namely
$y_{ij}<x_j$\,, $y_{jk}<x_j$\,, and $y_{jk}<x_k$\,, with $y_{ij}$ and $x_k$ incomparable. We give
a direct recursion.

For a two-terminal piece $H$ with terminals $s$ and $t$, let $q_H$ be the fiber function of
$\QQ(H)$ over $(u_s,u_t)$, measured in cut coordinates and multiplied by the vertex weights
(Section~\ref{sec:cutvertex}) at the vertices of $H$ other than $s$ and $t$. By switching
(Lemma~\ref{lem:switch}), $q_H$ depends only on $(v_s,v_t)=(|u_s|,|u_t|)$; for $\QQ$, the vertex
weights are likewise functions of $v=|u|$.

\begin{proposition}\label{prop:qdp}
The following hold.
\begin{enumerate}[label=\textup{(\alph*)},nosep]
\item For an edge, $q=1-2\max(v_s,v_t)$.
\item If $H$ is the series composition of $H_1$\,, with terminals $s$ and $m$, and $H_2$\,, with
terminals $m$ and $t$, then
\[
q_H(v_s,v_t)=2\int_0^{1/2}q_{H_1}(v_s,v)\,\omega_m(v)\,q_{H_2}(v,v_t)\,dv.
\]
\item If $H$ is the parallel composition of $H_1,\dots,H_r$\,, then $q_H=\prod_iq_{H_i}$\,.
\item Each $q_H$ is a polynomial on $\{v_s\ge v_t\}$ and on $\{v_s\le v_t\}$, of degree at most
$d$.
\end{enumerate}
\end{proposition}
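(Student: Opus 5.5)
Part (a) is Lemma~\ref{lem:edge}: once $(u_s,u_t)$ is fixed, $z_{st}$ ranges over an interval of length $1-2\max(|u_s|,|u_t|)$. No weights are involved because $s$ and $t$ are terminals. I will carry out (b) and (c) in cut coordinates, where Lemma~\ref{lem:edge} makes the fiber of $\QQ$ over all vertex coordinates a box. Every constraint of $\QQ(H)$ involves a single edge, so for fixed vertex coordinates the fibers of edge-disjoint pieces are independent and their volumes multiply.

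For (c), the pieces $H_1,\dots,H_r$ share only $s$ and $t$. Once $(u_s,u_t)$ is fixed, their remaining coordinates, weighted internal vertices included, are disjoint, and the constraints separate. The fiber is therefore a product, and the weighted fiber volume is $\prod_i q_{H_i}$. For (b), I fix $(u_s,u_t)$ and also the shared vertex $u_m$. The two halves then have disjoint remaining coordinates, so the weighted fiber factors as $q_{H_1}(\cdot,u_m)\,q_{H_2}(u_m,\cdot)$. I multiply by $\omega_m(u_m)$ and integrate over $u_m\in[-\tfrac12,\tfrac12]$. Every factor depends only on $|u_m|$: the $q$'s by switching (Lemma~\ref{lem:switch}) and $\omega_m$ by evenness. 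Folding onto $v=|u_m|\in[0,\tfrac12]$ then gives the factor $2$. This is the same reasoning as in Proposition~\ref{prop:qfib}.

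For (d) I induct on the decomposition. For an edge, (a) gives the polynomial $1-2v_s$ on $\{v_s\ge v_t\}$ and $1-2v_t$ on $\{v_s\le v_t\}$. Parallel composition preserves two-piece polynomiality, since the pieces share the same walls, and the degrees add. For series composition, I split the integral in (b) at $v=v_s$ and $v=v_t$. On each of the two regions $v_s\ge v_t$ and $v_s\le v_t$, the integrand is a single polynomial on each subinterval between consecutive points of $\{0,\min(v_s,v_t),\max(v_s,v_t),\tfrac12\}$. For example, on $v_s\ge v_t$ the intervals are $[0,v_t]$, $[v_t,v_s]$, $[v_s,\tfrac12]$, with $q_{H_1}$ and $q_{H_2}$ each fixed to a polynomial branch there. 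The limits are therefore polynomial in $(v_s,v_t)$, and the result is a polynomial on each region. The weight $\omega_m$ is a single polynomial in $v$ on $[0,\tfrac12]$, since each block marginal is one by the argument of Section~\ref{sec:cutvertex}.

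The degree bound is the only step needing care, and I handle it by support counting as in Proposition~\ref{prop:ops}. The claim is that the degree of $q_H$ is at most the number of coordinates it summarizes, excluding $u_s$ and $u_t$; this count is at most $d$. The weights satisfy the same bound, because a block marginal has degree at most its support by Lemma~\ref{lem:pushforward}. A product adds degrees and supports, and integrating over $v$ raises the degree by one while consuming the coordinate $u_m$. An edge has support $\{z_e\}$ and degree $1$. So the invariant persists through (b) and (c).

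Alternatively, one could note that unweighted $q_H$ is a fiber function of the $0/1$ polytope $\QQ(H)$ with weight-generating subgraphs attached. Walls come only from the images of its vertices, namely the square's sides and diagonals, which gives polynomiality with degree at most $N-2\le d$ directly. This is the argument of Theorem~\ref{thm:allpoly}(c).
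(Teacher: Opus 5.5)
Your argument for (a)--(d) is the paper's own. You factor the fiber once the shared vertex coordinates are fixed, fold $u_m$ onto $[0,\tfrac12]$ to get the factor $2$, and, for (d), split the series integral at $v=v_s$ and $v=v_t$ and bound the degree by the number of coordinates summarized.

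The closing ``alternative'' paragraph is wrong, however, and should be dropped, because $\QQ(H)$ is not a $0/1$ polytope once $H$ contains a cycle. For example, let $H$ consist of the triangles $sca$ and $sat$ sharing the edge $sa$, with terminals $s$ and $t$. Consider the point with $u_s=u_c=u_a=0$, $u_t=\tfrac12$, $z_{sc}=1$, $z_{ca}=z_{sa}=0$, and $z_{at}=z_{st}=\tfrac12$. It lies in $\QQ(H)$ and has nine linearly independent tight constraints, so it is a half-integral vertex. It projects to $(u_s,u_t)=(0,\tfrac12)$, and, by switching at $t$, so does a vertex to $(0,-\tfrac12)$.

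Lemma~\ref{lem:pushforward} then only gives polynomiality off lines such as $u_s=0$ and $u_t=\tfrac12-2u_s$. That route still yields the degree bound, but not the two-piece structure. The argument of Theorem~\ref{thm:allpoly}(c) works there only because $\PP(H)$ is a $0/1$ polytope.

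For the same reason, ``the argument of Section~\ref{sec:cutvertex}'' does not literally apply to the $\QQ$-weights $\omega_m$. Your conclusion that each weight is a single polynomial in $v$ on $[0,\tfrac12]$ is still correct, since over a single coordinate the only extra image is $u=0$, and folding removes that wall. The paper obtains it instead by splitting $\Phi_B(v_v)=2\int_0^{1/2}q_B(v_v,v_w)\,\omega_w(v_w)\,dv_w$ at $v_w=v_v$, that is, from (d) itself by induction over the block-cut tree.
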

\begin{proof}
(a) This is Lemma~\ref{lem:edge}. (b) Given $u_m$\,, the configurations of $H_1$ and $H_2$ are
independent, and folding $u_m\in[-\tfrac12,\tfrac12]$ onto $v\in[0,\tfrac12]$ produces the factor
$2$. (c) Given $(u_s,u_t)$, the constraints of $\QQ$ on the different parallel pieces involve
disjoint sets of variables. (d) We argue by induction. Suppose that $q_{H_1}$ and $q_{H_2}$ are
two-piece polynomials. On each ordering of $v_s$ and $v_t$\,, we split the integral in~(b) at
$v=v_s$ and $v=v_t$\,; the result is a sum of three integrals of polynomials whose limits lie among
$0$, $v_s$\,, $v_t$\,, and $\tfrac12$\,, hence a polynomial. Products preserve the two-piece
structure. The degree is at most the number of coordinates of $H$, and of the subgraphs that
generated its weights, other than $u_s$ and $u_t$\,.
\end{proof}

For a block $B$ with parent vertex $v$ and a neighbor $w$ of $v$ in $B$, we decompose $B$ with
terminals $v$ and $w$, as in Section~\ref{sec:cutvertex}. The marginal
\[
\Phi_B(v_v)=2\int_0^{1/2}q_B(v_v,v_w)\,\omega_w(v_w)\,dv_w
\]
is a polynomial in $v_v$ on $[0,\tfrac12]$, by splitting at $v_w=v_v$\,, and
\[
\vol\QQ(G)=2^{-m}\cdot2\int_0^{1/2}\prod_{B\ni v^\ast}\Phi_B(v)\,dv.
\]
For a bridge, $q_B$ is the edge state of~(a), and $\Phi_B$ is given by~\eqref{eq:bridge}. Within
a chain, we apply~(b) from $s$ to $t$, adding one segment at a time; at a P-node, we multiply the
states of the children one at a time. Proposition~\ref{prop:qfib} is the special case of this
recursion used inside Theorem~\ref{thm:chain}.

\begin{proposition}\label{prop:qcost}
The recursion above computes $\vol\QQ(G)$ with $O(d^2)$ arithmetic operations if $G$ is a forest,
and with
\[
O\Bigl(d^3+\min\Bigl\{d^4,\ d^2\sum_{P\in\Pi}|P|^2\Bigr\}\Bigr)
\]
arithmetic operations otherwise, on rationals of bit length $O(d^2)$. In particular, it uses
$O(d^3)$ operations when $G$ is a cactus, and $O(d^4)$ operations in general.
\end{proposition}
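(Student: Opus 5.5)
The plan is to reuse the accounting of Proposition~\ref{prop:ops}, but now every state is bivariate (in $(v_s,v_t)$, two-piece by Proposition~\ref{prop:qdp}(d)) instead of trivariate. The first step is to attach to each polynomial its support, exactly as before. By Proposition~\ref{prop:qdp}(d) a state with support $X$ has degree at most $|X|$. It therefore has $O(|X|^2)$ terms on each of its two pieces, and a univariate marginal or weight with support $Y$ has $O(|Y|)$ terms. The merging inequality~\eqref{eq:merge} carries over verbatim, with the constant $3$ replaced by $1$, since there are no cut regions and no theta recursion. The forest case is unchanged: it consists only of bridge transfers~\eqref{eq:bridge} and products of univariate marginals, costing $O(d^2)$. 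The bit-length bound $O(d^2)$ follows by the argument of Proposition~\ref{prop:bits}, because only products, affine substitutions and antiderivatives of degree at most $d$ occur, and there are $O(m)$ of them.

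\emph{Series steps at edges.} The next step bounds a step of Proposition~\ref{prop:qdp}(b) in which $H_2$ is an edge. Let $X$ be the support of the prefix state $q_{H_1}\omega_m$. The weight multiplication is a product of a bivariate polynomial with $O(|X|^2)$ terms by a univariate one with support $Y$. It costs $O(d\,|X|\,|Y|)$, which is $O(d^3)$ in total by~\eqref{eq:merge}. Multiplying by the edge state $1-2\max(v,v_t)$ costs only $O(d^2)$, because that state has $O(1)$ terms on each piece. We then split the integral at $v_s$ and $v_t$ and integrate; the antiderivative and the substitution of limits are linear in the $O(d^2)$ terms. With $O(m)$ such steps this is $O(d^3)$. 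Block marginals, namely one multiplication by $\omega_w$ followed by an integration, similarly cost $O(d^3)$ in total. A cactus has only these steps, which gives $O(d^3)$.

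\emph{Steps involving P-nodes.} It remains to handle a series step whose segment is a P-node $P\in\Pi$, and the products at a P-node with several children. For the series step, multiplying the prefix (support $X$) by $q_P$ costs $O(|X|^2|P|^2)$ coefficient operations, because both factors are bivariate. This is at most $O(d^2|P|^2)$ and also $O(d^2|X|\,|P|)$. Integrating out the middle variable costs $O((|X|+|P|)^2)$. For the products at a P-node, each product of a partial product (support $X\subset$ support of $P$) by a child state (support $C_i$) costs $O(|X|^2|C_i|^2)$. This is $O(|P|^2|X|\,|C_i|)$ and also $O(d^2|X|\,|C_i|)$, so~\eqref{eq:merge} gives $O(d^4)$ overall. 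Per node it is at most $O(|P|^2\sum_i|C_i|^2)=O(d^2|P|^2)$, since $|P|\le d$. Summing over $P\in\Pi$ and taking the better of the two bounds gives $O\bigl(\min\{d^4,\ d^2\sum_{P\in\Pi}|P|^2\}\bigr)$, and Lemma~\ref{lem:nutau} gives $\Pi=\emptyset$ for cacti.

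The main obstacle is making the global $d^4$ bound rigorous for the series steps at P-nodes. The per-step estimate $O(d^2|X|\,|P|)$ must be summed using~\eqref{eq:merge}, and this works only if the two-piece splitting does not introduce extra cross terms. The splitting points $v_s,v_t$ are terminals, not new variables, so I expect the splitting to cost at most a constant factor, which I would check first.
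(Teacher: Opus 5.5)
Your proposal is essentially the paper's proof: the same supports and term counts, the merging bound, and the same split into edge series steps, P-node series steps, products at P-nodes, block marginals, and the forest and cactus cases via Lemma~\ref{lem:nutau}; the one-chain-plus-edge P-nodes outside $\Pi$, which you do not treat explicitly, cost $O(d^2)$ each, like an edge step. The obstacle you flag is resolved as you expect, but your merging constant of $1$ is wrong. The paper counts each pair as separated at most six times in a series step: once per product of a piece of the partial result with the weight, and once per product on each of the three subintervals for each of the two orderings of $v_s$ and $v_t$. This gives $\sum|X|\,|Y|\le6\binom d2$. Also, integrating out the middle variable costs time linear in the up to $(|X|+1)(|P|+1)(|X|+|P|+1)$ terms of the trivariate integrand, not $O((|X|+|P|)^2)$, though this still sums to $O(d^3)$ by merging.
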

\begin{proof}
We use the supports and the counts of terms from the proof of Proposition~\ref{prop:ops}. By
Proposition~\ref{prop:qdp}(d), each state $q_H$ has degree at most the size of its support, and
so does each partial result in a chain, by the same induction. We count the two pieces of a
two-piece polynomial separately, which affects only the constants. For a forest, every block is
a bridge, and the recursion coincides with that of Section~\ref{sec:cutvertex}, so the bound
$O(d^2)$ of Proposition~\ref{prop:ops} applies.

\emph{Merging.} Every product again multiplies two polynomials with disjoint supports $X$ and $Y$.
In a series step, a pair of coordinates is separated at most six times: once in each product of
a piece of the partial result with the weight, and once in each of the products, on the three
subintervals of the two orderings of $v_s$ and $v_t$\,, of a piece of that product with a piece of
the new segment. Elsewhere a pair is separated at most twice. As for~\eqref{eq:merge}, we obtain
$\sum|X|\,|Y|\le6\binom d2$. A product of two polynomials in at most two variables each costs at
most $(2|X|)^2(2|Y|)^2\le16\,d\,|X|\,|Y|\min\{|X|,|Y|\}$, and it costs at most
$8\,d\,|X|\,|Y|$ if one factor is univariate.

\emph{Series steps.} We add a segment $\sigma$, with support $Y$, to a partial result $q(v_s,v)$,
with support $X$, through a vertex with weight $\omega$, with support $W$. The product
$q\,\omega$ costs $8\,d\,|X|\,|W|$, hence $O(d^3)$ in total. Let $X'=X\cup W$. The product of
$q\,\omega$ with the state of $\sigma$ costs at most $16\,d\,|X'|\,|Y|^2$. If $\sigma$ is an edge,
then $|Y|=1$, and these products cost $O(d^3)$ in total. If $\sigma$ is a nested P-node $P$, then
this cost is at most $16\,d^2|X'|\,|P|$ and at most $16\,d^2|P|^2$, so these products cost
$O\bigl(\min\{d^4,\ d^2\sum_{P\in\Pi}|P|^2\}\bigr)$ in total. The resulting trivariate polynomial
has degree at most $|X'|$ in $v_s$\,, at most $|Y|$ in $v_t$\,, and at most $|X'|+|Y|$ in $v$, so
it has at most $(|X'|+1)(|Y|+1)(|X'|+|Y|+1)\le8\,d\,|X'|\,|Y|$ terms. Integrating out $v$
therefore costs $O(d^3)$ in total.

\emph{Parallel compositions.} Let $P$ be a P-node. If $P\notin\Pi$, it has one chain and the edge
$st$, whose state has support of size $1$; the product costs $16\,d\,|X|$, hence $O(d^3)$ in
total. If $P\in\Pi$, each product multiplies a partial product, whose support lies in that of
$P$, by the state of a child $C$, at cost at most $16\,d^2|X|\,|C|$ and at most
$16\,|P|^2|C|^2$. At $P$ these products cost at most $16\,|P|^2\sum_C|C|^2\le16\,|P|^4\le
16\,d^2|P|^2$, and over all P-nodes they cost $O(d^4)$ by merging.

\emph{Block marginals and the root.} For a 2-connected block, the product $q_B\,\omega_w$ costs
$8\,d\,|X|\,|W|$, hence $O(d^3)$ in total, and integrating out $v_w$ costs $O(d^2)$ per block. The
bridges, the products of univariate marginals, and the final integral cost $O(d^2)$, as in the
proof of Proposition~\ref{prop:ops}.

Adding the contributions proves the bound. For a cactus, $\Pi=\emptyset$ by
Lemma~\ref{lem:nutau}, which leaves $O(d^3)$. The bound on bit lengths follows as in
Proposition~\ref{prop:bits}.
\end{proof}

By~\cite[Theorem~4]{LeeSkipper20}, the number of linear extensions of the incidence poset
$(\mathscr V(G),\prec)$ of $G$ equals $2^{|E|}\,d!\,\vol\QQ(G)$. We therefore obtain the following
corollary, in contrast with the theorem of~\cite{DittmerPak20} that this counting
problem is \#P-complete for arbitrary graphs.

\begin{corollary}\label{cor:linext}
For every series-parallel graph $G$, we can count the linear extensions of the incidence poset
of $G$ with $O(d^4)$ arithmetic operations, and with $O(d^3)$ operations if $G$ is a cactus.
\end{corollary}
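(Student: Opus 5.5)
The corollary is a direct combination of the identity of \cite[Theorem~4]{LeeSkipper20} with Proposition~\ref{prop:qcost}. The plan is first to compute $\vol\QQ(G)$ exactly by the recursion of Section~\ref{sec:volQ}, and then to multiply by $2^{|E|}\,d!$. Proposition~\ref{prop:qcost} gives $\vol\QQ(G)$ as a rational with $O(d^4)$ arithmetic operations in general, and $O(d^3)$ if $G$ is a cactus. If $G$ is disconnected, I will treat each component separately and multiply the volumes, which costs $O(n)$ operations; I use the convention that a graph whose components are cacti is covered by the cactus bound. Correctness of the recursion itself is Proposition~\ref{prop:qdp} together with the block marginal formula, so nothing new is needed there.

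Next I will form the factor $2^{m}\,d!$. This takes $O(m)$ multiplications for the power of two and $O(d)$ multiplications for the factorial, so at most $O(d)$ further operations. Multiplying by $\vol\QQ(G)$ is one more operation. The count of linear extensions is therefore obtained within the stated budgets.

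There is one point to check: \cite[Theorem~4]{LeeSkipper20} is stated for the poset on $\mathscr V(G)=V\cup E$, so it must match our normalization, in which $\QQ(G)\subset\R^{V\cup E}$ has dimension $d$. This is exactly the convention of Section~\ref{sec:prelim}, which follows \cite[Lemma~1]{LeeSkipper20}, so the identity applies verbatim.

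For the bit-length remark, $d!\,2^m$ has bit length $O(d\log d)$, well within the $O(d^2)$ bound of Proposition~\ref{prop:bits}, so the arithmetic stays polynomial. The only real obstacle would be a mismatch of conventions in the cited identity (the factor $2^{|E|}$ versus the Jacobian~\eqref{eq:jac}). To rule this out, I would verify it on $K_2$: there $\vol\QQ(K_2)=\tfrac16$, and $2\cdot3!\cdot\tfrac16=2$, which equals the number of linear extensions of the poset $y<x_1$, $y<x_2$. This confirms the normalization.
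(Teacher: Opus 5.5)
Your proposal is correct and takes the same route as the paper. The paper obtains the corollary directly from the identity of \cite[Theorem~4]{LeeSkipper20}, which gives $2^{|E|}\,d!\,\vol\QQ(G)$ linear extensions, together with the operation counts of Proposition~\ref{prop:qcost}; your additional bookkeeping (the $O(d)$ cost of forming $2^m d!$, the bit lengths, and the check on $K_2$) is consistent with this and harmless.
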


Both volumes also have a reading in terms of cut polytopes. The \emph{suspension} $\nabla G$ of
$G$ is obtained by adding a vertex $a$ adjacent to every vertex of $G$; its cut polytope
$\mathrm{Cut}(\nabla G)$ is the convex hull of the cut vectors of $\nabla G$ in $\R^{V\cup E}$,
where the coordinate $x_{ia}$ of the edge $ia$ is indexed by $i\in V$. The \emph{rooted metric
polytope} of $K_{n+1}$ with root $a$ is defined by the four triangle inequalities
\[
x_{ij}\le x_{ia}+x_{ja}\,,\qquad x_{ia}\le x_{ij}+x_{ja}\,,\qquad x_{ja}\le x_{ij}+x_{ia}\,,\qquad
x_{ij}+x_{ia}+x_{ja}\le2
\]
for all pairs $\{i,j\}$ of vertices other than $a$; we write $\mathrm{RMet}(\nabla G)$ for its
projection onto the coordinates indexed by the edges of $\nabla G$; see~\cite{KLS97}.

\begin{corollary}\label{cor:rmet}
For every graph $G$ without isolated vertices,
\[
\vol\mathrm{Cut}(\nabla G)=2^{|E|}\vol\PP(G)\qquad\text{and}\qquad
\vol\mathrm{RMet}(\nabla G)=2^{|E|}\vol\QQ(G).
\]
Hence, if $G$ is series-parallel, then $\vol\mathrm{Cut}(\nabla G)$ can be computed with $O(d^7)$
arithmetic operations, and with $O(d^5)$ if $G$ is a cactus, and $\vol\mathrm{RMet}(\nabla G)$ with
$O(d^4)$ arithmetic operations, and with $O(d^3)$ if $G$ is a cactus.
\end{corollary}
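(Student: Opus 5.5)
The plan is to write down an explicit volume-preserving linear bijection, up to the factor $2^{|E|}$, between $\R^{V\cup E}$ with coordinates $(x,y)$ and the space of edge variables of $\nabla G$. This is the covariance map of De~Simone and Padberg. We send $(x,y)$ to the cut vector with
\[
x_{ia}=x_i \quad (i\in V),\qquad x_{ij}=x_i+x_j-2y_{ij}\quad (ij\in E).
\]
In the cut coordinates of Section~\ref{sec:prelim}, this is just $(x,z)$ with $x_{ij}=z_{ij}$. By~\eqref{eq:jac}, the map has $|\det|=2^{|E|}$, which gives $\vol$ of the image $=2^{|E|}\vol$ of the preimage. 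Both identities then reduce to showing that this map carries $\PP(G)$ onto $\mathrm{Cut}(\nabla G)$ and $\QQ(G)$ onto $\mathrm{RMet}(\nabla G)$.

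For $\PP$, we check the vertices. A $0/1$ point with $x=\mathbf 1_S$ and $y_{ij}=x_ix_j$ maps to $x_{ia}=[i\in S]$ and $x_{ij}=|x_i-x_j|$, which is exactly the cut vector of $\delta(S)$ in $\nabla G$, with $a$ on the side opposite $S$. Every cut of $\nabla G$ has this form, by choosing the shore not containing $a$. A linear bijection maps convex hulls to convex hulls, so $\PP(G)$ maps onto $\mathrm{Cut}(\nabla G)$.

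For $\QQ$, the key observation is that, for each pair $ij$, the four triangle inequalities on the triangle $\{i,j,a\}$ are, after substitution, exactly the four inequalities~\eqref{eq:Q} for the edge $ij$. This is the content of Lemma~\ref{lem:edge}: $|x_i-x_j|\le z_{ij}\le\min(x_i+x_j,\,2-x_i-x_j)$. The main obstacle is the definition of $\mathrm{RMet}(\nabla G)$ as a \emph{projection}. The rooted metric polytope lives on all pairs of $K_{n+1}$, and we must show that projecting away the coordinates $x_{ij}$ with $ij\notin E$ yields exactly the set cut out by the triangle inequalities of the triangles $\{i,j,a\}$ with $ij\in E$. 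Containment of the projection in that set is immediate. For the converse, I would extend a point by choosing each missing $x_{ij}$ independently in the interval $[\,|x_{ia}-x_{ja}|,\ \min(x_{ia}+x_{ja},\,2-x_{ia}-x_{ja})]$. This interval is nonempty as soon as $0\le x_{ia}\le1$. That bound holds because every vertex has an incident edge of $G$, since $G$ has no isolated vertices, and the edge inequalities force it. This is where the hypothesis on isolated vertices enters. Because rooted metric constraints only involve triangles through $a$, the choices for different pairs do not interact, so the extension lies in the rooted metric polytope. (An isolated vertex would leave $x_{ia}$ unconstrained in $\mathrm{RMet}$ but in $[0,1]$ in $\QQ$, which is why it is excluded.)

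The complexity statements then follow by multiplying the output of Theorem~\ref{thm:main} and Proposition~\ref{prop:qcost} by $2^{|E|}$, at negligible cost.
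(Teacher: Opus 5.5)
Your proposal is correct and follows the paper's proof. It uses the same covariance map with $|\det|=2^{|E|}$, the same check that vertices go to cut vectors for $\PP$, and the same pair-by-pair extension of the missing coordinates $x_{ij}$ with $ij\notin E$; the only difference is that the paper picks the specific value $x_{ij}=x_{ia}+x_{ja}-2x_{ia}x_{ja}$ from your interval. One small quibble with your parenthetical remark: for $n\ge2$ the projection of the rooted metric polytope still forces $x_{ia}\in[0,1]$ through the non-edge triangles $\{i,j,a\}$, so an isolated vertex breaks only your intermediate description by edge triangles alone, not the confinement of $x_{ia}$.
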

\begin{proof}
The covariance map $x_{ia}=x_i$\,, $x_{ij}=x_i+x_j-2y_{ij}$ is a linear bijection from
$\R^{V\cup E}$ onto itself whose determinant has absolute value $2^{|E|}$. It maps the vertex of
$\PP(G)$ given by $x\in\{0,1\}^V$ to the cut vector of the cut of $\nabla G$ that separates
$\{i:x_i=1\}$ from the other vertices, so it maps $\PP(G)$ onto $\mathrm{Cut}(\nabla G)$; see
also~\cite[Section~5.2]{DezaLaurent97}. For the rooted metric polytope, a coordinate $x_{ij}$ with
$ij\notin E$ occurs only in the triangle $\{i,j,a\}$, and, because $x_{ia},x_{ja}\in[0,1]$ by the
triangle inequalities of edges at $i$ and at $j$, the value $x_{ij}=x_{ia}+x_{ja}-2x_{ia}x_{ja}$
satisfies its four inequalities. So $\mathrm{RMet}(\nabla G)$ is defined by the triangle
inequalities of the triangles $\{i,j,a\}$ with $ij\in E$, and the covariance map carries these
onto the inequalities~\eqref{eq:Q} that define $\QQ(G)$. The bounds follow from
Theorem~\ref{thm:main} and Proposition~\ref{prop:qcost}.
\end{proof}

\noindent In particular, the volume of the cut polytope of the suspension of a series-parallel
graph, the form in which the question of~\cite[Section~8]{LeeSkipper20} is restated
in~\cite[Section~7]{ADD26}, can be computed in polynomial time. Closed forms for the suspensions
of stars, paths, and cycles are collected in~\cite[Table~2]{ADD26}.

\subsection{A worked example}\label{sec:example}
We carry the running example of Section~\ref{sec:sprunning} through the algorithm:
$G$ is $K_{2,3}$ with a triangle $T$ glued at the cut vertex $a$ (Figure~\ref{fig:running}), with
$n=7$, $m=9$, and $d=16$. Figure~\ref{fig:example} shows the three steps that involve the
machinery of Sections~\ref{sec:chain}--\ref{sec:algo}.

\emph{Block-cut tree} (Figure~\ref{fig:running}(b)). We root the block-cut tree at $v^\ast=s$, so
that $a$ is a child vertex of $B$ and $T$ hangs below $a$. The triangle $T$ is edge-closed; in the
variables $(u_a,u_{c_1},\zeta)$
with $\zeta=z-\tfrac12$\,, its state is
\[
S=\tfrac14-u_a^2-u_{c_1}^2-\zeta^2-4u_au_{c_1}\zeta,
\]
which is symmetric in its three variables. Integrating out $(u_{c_1},z)$ as in
Section~\ref{sec:cutvertex}, we obtain the vertex weight
\[
\omega_a(u)=\Phi_T(u)=\tfrac1{12}\bigl(1-4u^2\bigr)^2;
\]
up to the factor $2^{-3}$ that converts cut coordinates, this is the marginal
$\Phi^\PP_{C_3,a}$ of Proposition~\ref{prop:rhotri}.

\emph{Decomposition of $B$} (Figure~\ref{fig:running}(c)). The vertex $a$ is the internal vertex
of the chain $s,a,t$, so its weight $\omega_a$ enters exactly when we close that chain.

\emph{The theta} (Figure~\ref{fig:example}(a)). Closing either chain of $H$ by a virtual edge $sa$ yields a
triangle, with state $S$ as above in the variables $(u_s,u_a,\zeta)$. From
Lemma~\ref{lem:chaindens}, the density of $1-d_1$ for a path of length two is
$f(z)=(1-z)^2-(u_s+u_a)^2$. Substituting $S_1=S_2=S$ and $f_1=f_2=f$ in
Theorem~\ref{thm:theta}, we obtain
\[
m_0^H(w)=2S(w)\bigl(f(w)-\partial_wS(w)\bigr)=2S(w)\bigl(w^2-\clo^2\bigr),
\]
which is visibly nonnegative for $\clo\le w\le\chii$\,. With $v=|u|$, the mass of $H$ on
$v_s\ge v_a$ is
\[
G_H=\tfrac1{10}(1-2v_s)^2\bigl(1+4v_s+2v_s^2-8v_s^3-10v_a^2-40v_sv_a^2+40v_s^2v_a^2+40v_a^4\bigr).
\]
Because $H$ is a $4$-cycle, integrating $G_H$ and applying~\eqref{eq:jac} recovers
$\vol\PP(C_4)=1/630$ of~\cite[Theorem~19]{LeeSkipper20}.

\emph{Closing the cycle} (Figure~\ref{fig:example}(b)). The cycles through $st$ consist of $st$, a path through $H$,
and the edge $at$. The chain has two segments, so there are four odd labellings. From
Lemma~\ref{lem:chainswitch}, two representatives suffice, each with multiplicity $2$: $A=\{st\}$,
which is the event $d_0<z$ for the chain, and $A=\{\sigma_1\}$, which is the event $d_1<1-z$.

\emph{Rotated coordinates} (Figure~\ref{fig:example}(c)). The cut region of $A$ is the set of staircases
$p_0\le p_1\le p_2$\,. The start $p_0=(u_s,-u_s)$ is fixed, the point $p_1$ that records the
vertex $a$ ranges over the box between $p_0$ and $p_2$\,, and the endpoint $p_2$ slides along the
line $a-b=2e$ with $e=\pm u_t$\,. In the box integral of Proposition~\ref{prop:box}, the density of
$\sigma_1$ is $m_0^H$ or $m_1^H$\,, evaluated at $w_1=S_1$\,, and the vertex weight
$\omega_a(\tilde u_1)$ multiplies the integrand at $p_1$\,. In the $\QQ$-fiber
(Proposition~\ref{prop:qfib}), $\omega_a$ multiplies the first transfer step.

\emph{Result.} From~\eqref{eq:close}, the root state of $B$ is
$g_B=Q_{\rm fib}-2\vol C_{\{st\}}-2\vol C_{\{\sigma_1\}}$\,, a polynomial of degree $12$ with
$197$ terms. Without the weight, the state of $K_{2,3}$ alone has degree $8=11-3$; the weight
$\omega_a$ contributes its degree $4$. Integrating out $(u_t,z)$ gives the block marginal
$\Phi_B(u_s)$, and
\[
\vol\PP(G)=2^{-9}\int_{-1/2}^{1/2}\Phi_B(u)\,du=\frac{653}{908107200}\,.
\]
Every choice of root of the block-cut tree gives the same rational. The recursion of Section~\ref{sec:volQ} gives
$\vol\QQ(G)=64451/48306585600$, so $\vol\PP(G)/\vol\QQ(G)\approx0.5390$.

This ratio is revealing. For $K_{2,3}$ alone, the same computation gives
$\vol\PP(K_{2,3})=1/11880$ and $\vol\PP/\vol\QQ=16/19$, and for the triangle alone the ratio is
$2/3$. If the two blocks were disjoint, the ratios would multiply to $32/57\approx0.5614$. Gluing
them at $a$ lowers the ratio to $0.5390$: both blocks constrain the shared coordinate $x_a$\,, so
their losses do not factor. This interaction is exactly what the vertex weights capture, and
Section~\ref{sec:comp} explains it: the triangle's inequalities cut off volume mostly near
$x_a=\tfrac12$ (Proposition~\ref{prop:rhotri}), where every block glued at $a$ concentrates its
mass (Lemma~\ref{lem:unimodal}), so a hanging triangle cuts off at least a third of $\QQ(G)$
(Corollary~\ref{cor:pendanttri}).

\begin{figure}[ht]
\centering
\tikzset{
  v/.style={circle,fill,inner sep=1.3pt},
  cut/.style={circle,draw,very thick,fill=white,inner sep=1.8pt},
  lab/.style={font=\small},
  root/.style={very thick,red!75!black},
  odd/.style={very thick,red!75!black,dashed},
  seg/.style={line width=2.2pt,blue!55!black},
  tn/.style={draw,rounded corners=2pt,inner sep=2pt,font=\scriptsize,fill=white},
  bn/.style={draw,rounded corners=2pt,inner sep=3pt,font=\small,fill=white}
}
\begin{minipage}[b]{0.40\textwidth}\centering
\begin{tikzpicture}[scale=1.0]
  \coordinate (S) at (0,0); \coordinate (A) at (3,0);
  \coordinate (M1) at (1.5,0.9); \coordinate (M2) at (1.5,-0.9);
  \draw (S) -- (M1) -- (A); \draw (S) -- (M2) -- (A);
  \draw[dashed,thick,gray] (S) -- (A) node[midway,above,lab,gray] {virtual $sa$};
  \foreach \p/\n/\pos in {S/s/left,A/a/right,M1/m_1/above,M2/m_2/below}
    \node[v,label={[lab]\pos:$\n$}] at (\p) {};
  \node[lab] at (0.45,0.75) {$C_1$}; \node[lab] at (0.45,-0.75) {$C_2$};
\end{tikzpicture}\\[2pt]
{\small (a) the theta $H$ and its virtual edge}
\end{minipage}

\vspace{10pt}
\begin{minipage}[b]{0.40\textwidth}\centering
\begin{tikzpicture}[scale=0.85]
  \foreach \dx/\which/\cap in {0/1/{$A=\{st\}$},3.4/2/{$A=\{\sigma_1\}$}} {
    \begin{scope}[xshift=\dx cm]
      \coordinate (S) at (0,0); \coordinate (A) at (2.1,0); \coordinate (T) at (1.05,1.35);
      \ifnum\which=1
        \draw[seg] (S) -- (A); \draw[odd] (T) -- (S);
      \else
        \draw[seg,red!75!black,dashed] (S) -- (A); \draw[very thick] (T) -- (S);
      \fi
      \draw[thick] (A) -- (T);
      \node[lab,below] at (1.05,-0.05) {$\sigma_1=H$};
      \node[lab,right] at (1.6,0.75) {$\sigma_2$};
      \node[lab,left] at (0.5,0.75) {$st$};
      \foreach \p/\n/\pos in {S/s/left,T/t/above}
        \node[v,label={[lab]\pos:$\n$}] at (\p) {};
      \node[cut,label={[lab]right:$a$}] at (A) {};
      \node[lab] at (1.05,-0.8) {\cap};
    \end{scope}}
\end{tikzpicture}\\[2pt]
{\small (b) the two representative odd labellings (dashed: odd)}
\end{minipage}\hfill
\begin{minipage}[b]{0.57\textwidth}\centering
\begin{tikzpicture}[scale=3.4,>={Stealth[length=2mm]}]
  \draw[->] (-0.15,0) -- (0.95,0) node[right,lab] {$a$};
  \draw[->] (0,-0.25) -- (0,0.75) node[above,lab] {$b$};
  \coordinate (P0) at (0.10,-0.10); \coordinate (P1) at (0.36,0.21); \coordinate (P2) at (0.66,0.56);
  \fill[blue!10] (P0) rectangle (P2);
  \draw[blue!40] (P0) rectangle (P2);
  \draw[dashed,gray] (0.10,0.00) -- (0.80,0.70) node[above,lab,gray] {$a-b=2e$};
  \node[circle,draw,gray,inner sep=1.1pt,fill=white] at (0.10,0.00) {};
  \draw[very thick,->,red!60!black] (0.46,0.36) -- (0.72,0.62);
  \draw[thick,->] (P0) -- (P0 -| P1) -- (P1);
  \draw[thick,->] (P1) -- (P1 -| P2) -- (P2);
  \node[v,label={[lab]below right:$p_0$}] at (P0) {};
  \node[cut,label={[lab]above left:$p_1$}] at (P1) {};
  \node[lab,green!45!black,anchor=west] at (0.40,0.10) {$\omega_a(\tilde u_1)$};
  \node[v,label={[lab]left:$p_2$}] at (P2) {};
\end{tikzpicture}\\[2pt]
{\small (c) rotated coordinates: $p_1$ (vertex $a$) ranges over\\ the shaded box, carrying the weight $\omega_a$; $p_2$ slides along the line}
\end{minipage}
\caption{The algorithm on the running example of Figure~\ref{fig:running}
(Section~\ref{sec:example}).}
\label{fig:example}
\end{figure}

\subsection{Cut polytopes of series-parallel graphs}\label{sec:cutG}
The same recursion, with every vertex coordinate fixed at $x_v=\tfrac12$ (that is, $u_v=0$)
instead of integrated out, computes the volume of the cut polytope of $G$ itself. For forests,
cycles, cacti, and necklaces (cycles with a cycle glued at each vertex), closed forms for this
volume are given in~\cite[Section~3]{ADD26}; we treat every series-parallel graph. For $S\subseteq V$, the \emph{cut vector} of $S$ is the vector in
$\{0,1\}^E$ whose entry for $e$ is $1$ exactly when $e$ has one end in $S$, and the \emph{cut
polytope} $\mathrm{Cut}(G)\subseteq\R^E$ is the convex hull of the cut vectors. If $G$ has no
$K_5$ minor, in particular if $G$ is series-parallel, then, by the theorem
of~\cite{BarahonaMahjoub86}, $\mathrm{Cut}(G)$ is the set of $z\in[0,1]^E$ that satisfy
\[
\sum_{e\in F}(1-z_e)+\sum_{e\in E(C)\setminus F}z_e\ge1
\]
for every cycle $C$ of $G$ and every odd set $F\subseteq E(C)$. When every vertex coordinate
$u_v$ is $0$, each interval of Lemma~\ref{lem:edge} is $[0,1]$, and, by Lemma~\ref{lem:oc}, these
are the odd-cycle inequalities. So, by Theorem~\ref{thm:padberg}, $\mathrm{Cut}(G)$ is the fiber
of $\PP(G)$ in cut coordinates over $u=0$, and $\vol\mathrm{Cut}(G)$ is the volume of that fiber.

For a two-terminal piece $H$ of the decomposition, fix all vertex coordinates of $H$ at $0$, so
that a configuration of $H$ is a point $z\in[0,1]^{E(H)}$. Nothing now couples the segments of a
chain, so their parity distances are independent, and the distribution of a sum of segment lengths
is the convolution of their distributions; this is why the recursion below is built from
convolutions. We write $G^0_H$ for the volume of
the feasible configurations and $m^0_H(w)$ for the density of $d_0(H)$ at $w\in[0,1]$, and we use
the convolution $(f\star g)(w)=\int_0^wf(t)\,g(w-t)\,dt$ of polynomials, which is again a
polynomial.

\begin{theorem}\label{thm:cutG}
Let $G$ be a series-parallel graph with $m$ edges. For the nodes of the decomposition of each
$2$-connected block of $G$, the numbers $G^0_H$ and the polynomials $m^0_H$ satisfy the following.
\begin{enumerate}[label=\textup{(\alph*)}]
\item For an edge, $G^0=1$ and $m^0=1$.
\item For a chain $C$ with segments $\sigma_1,\dots,\sigma_k$\,, put $G^0_C=\prod_jG^0_{\sigma_j}$\,,
$h_C=m^0_{\sigma_1}\star\cdots\star m^0_{\sigma_k}$\,, $D_C(q)=2^{k-1}\int_0^qh_C$\,, and
$S_C(z)=G^0_C-D_C(z)-D_C(1-z)$.
\item For an edge-closed P-node $H$ with chains $C_1,\dots,C_r$\,, $m^0_H=\prod_jS_{C_j}$ and
$G^0_H=\int_0^1m^0_H$\,.
\item For a theta $H$ with chains $C_1,\dots,C_r$\,,
\[
m^0_H=\sum_iD_{C_i}'\prod_{j\ne i}S_{C_j}\,,\qquad
G^0_H=\prod_jS_{C_j}(0)+\sum_i\int_0^1D_{C_i}'(1-z)\prod_{j\ne i}S_{C_j}(z)\,dz.
\]
\end{enumerate}
Then $\vol\mathrm{Cut}(G)=\prod_BG^0_B$\,, where $B$ ranges over the $2$-connected blocks of $G$
and $G^0_B$ is the value at the root of its decomposition. In particular, $\vol\mathrm{Cut}(G)$ can
be computed with $O(m^3)$ arithmetic operations on rationals of bit length $O(m^2)$.
\end{theorem}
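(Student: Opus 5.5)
We specialize the recursion of Sections~\ref{sec:chain} and~\ref{sec:theta} to the fiber $u\equiv0$. First, the fiber of $\PP(G)$ over $u=0$ in cut coordinates equals $\mathrm{Cut}(G)$, as explained before the statement. Over $u=0$ every constraint couples only edges inside one block, because the odd-cycle inequalities live on cycles, and cycles lie in blocks. So the fiber is a product over blocks and over bridges, and a bridge contributes the interval $[0,1]$. This gives $\vol\mathrm{Cut}(G)=\prod_BG^0_B$ once each $G^0_B$ is identified as the fiber volume of its block.

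Within a block we proceed by induction over the canonical decomposition. Part (a) is immediate: for an edge, $z\in[0,1]$ and $d_0=z$. For (b), fix a chain $C$ and a virtual edge $st$ with value $z\in[0,1]$. With all vertex coordinates zero, the segments share no variables, so their $d_0$'s are independent. By \eqref{eq:m1} at $u=0$, $d_1$ has the same law as $d_0$, so every labelled length has density $m^0_{\sigma_j}$. Lemma~\ref{lem:parity}(b),(c) and Theorem~\ref{thm:chain}(c) put these lengths in $[0,1]$. The sum of the segment lengths under any labelling therefore has density $h_C$.

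By Theorem~\ref{thm:chain}(a) and Lemma~\ref{lem:chainswitch}, the $2^{k-1}$ labellings with $st\in A$ contribute $\Pr(\text{sum}<z)$ each, and the others contribute $\Pr(\text{sum}<1-z)$. By \eqref{eq:close} this gives $S_C(z)=G^0_C-D_C(z)-D_C(1-z)$, which is the state of $C+st$ at $z$. It follows that $D_C'(q)=2^{k-1}h_C(q)$ is the density of $d_0(C)$ on $(0,1)$, as in Lemma~\ref{lem:chaindens} with $\clo=0$ and $\chii=1$. The density of $1-d_1(C)$ is $D_C'(1-q)$.

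Part (c) is the remark after Proposition~\ref{prop:qfib}: once $st$ is present, the closed-chain states multiply. The state $g$ at $u=0$ is the density of $z=d_0$, and integrating over $z$ gives $G^0_H$. Part (d) is Theorem~\ref{thm:theta} at $\clo=0$, $\chii=1$, with $f_i(m)=D_{C_i}'(1-m)$. We also use $\prod_jS_{C_j}(1)=0$, since $1-d_1\le d_0$ and the $d_0$'s are positive almost surely; I would check this boundary term carefully. The theorem's formula for $m^0_H$ needs care. Its first term should be $\sum_if_i\prod_{j\ne i}S_j$, with $f_i(w)=D_{C_i}'(1-w)$, and the correction term is $-\partial_w\prod S_j$. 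Expanding $\partial_wS_j=-D_j'(w)+D_j'(1-w)$, the $D'(1-w)$ parts cancel against $\sum f_i\prod$, leaving $\sum_iD_{C_i}'(w)\prod_{j\ne i}S_{C_j}(w)$, which matches the statement. I expect this cancellation, together with confirming $m^0_H$ as the exact density at $u=0$ without boundary atoms, to be the main obstacle.

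For the complexity, every polynomial is univariate of degree at most the number of edges it summarizes. Convolution and multiplication of polynomials with supports $X$ and $Y$ cost $O(|X||Y|)$ operations, plus linear-cost antiderivatives. The merging argument \eqref{eq:merge}, restricted to edge coordinates, bounds all products by $O(m^2)$ per pass. Allowing for the constant number of passes per node and the $O(m)$ evaluations of $S_C(1-z)$ by Taylor shift at $O(m^2)$ each, the total is $O(m^3)$. The bit-length bound follows as in Proposition~\ref{prop:bits}, with $d$ replaced by $m$.
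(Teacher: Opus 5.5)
Your argument is the paper's in substance: fix every vertex coordinate at $0$, so that the segments of a chain and the chains of a P-node become independent, and run the chain and theta recursions, in which sums of lengths become convolutions.

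Strictly, Theorems~\ref{thm:chain} and~\ref{thm:theta}, Lemma~\ref{lem:chainswitch}, and~\eqref{eq:m1} are statements about the fiber over $(u_s,u_t)$ with the internal vertex coordinates integrated out. What you apply over $u\equiv0$ is therefore their proofs: pointwise disjointness of cut regions, switchings $\sigma_v$ (which fix $u=0$), and independence given the terminals. The paper's proof writes exactly these out.

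For a theta, the paper reads off $m^0_H$ directly. Off a null set, $\min_id_0(C_i)=w$ is attained at a unique $i$, and $\ell_i\le d_0(C_i)$ reduces feasibility to $d_0(C_j)\ge w\ge\ell_j$ for $j\ne i$. Your route through the derivative term of Theorem~\ref{thm:theta}, with the $D'(1-w)$ terms cancelling, is correct and gives the same formula. Your cost accounting (merging for products and convolutions, plus $O(m)$ Taylor shifts at $O(m^2)$ each) is a valid alternative to the paper's bound $\sum_H|E(H)|^2\le m^3$ over the laminar family of pieces.

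The claim $\prod_jS_{C_j}(1)=0$, however, is false, and you should drop it. Nothing needs it: the mass formula in (d) is Theorem~\ref{thm:theta} at $q=\clo=0$, with no boundary term at $1$. Here $S_C(1)=\mu\{d_0(C)\ge1\}$, and neither $1-d_1\le d_0$ nor $d_0>0$ bounds $d_0$ above. Lemma~\ref{lem:parity} gives only lower bounds, except that $d_0=z_{st}$ for edge-closed pieces.

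For a concrete counterexample, take a chain of three edges. Then $h_C=w^2/2$ and $D_C(q)=\tfrac23q^3$, so $S_C(1)=\tfrac13$; at $z_e\equiv\tfrac12$, every labelling has length $\tfrac32$. A theta of two such chains has $\prod_jS_{C_j}(1)=\tfrac19$, which is exactly its feasible mass with $d_0(H)\ge1$. That is why (d) gives $G^0_H$ by its own formula rather than as $\int_0^1m^0_H$, as in (c), and why $m^0_H$ is the density of $d_0(H)$ only on $[0,1]$.

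For the same reason, Lemma~\ref{lem:parity} does not put the segment lengths in $[0,1]$. You need only that they are nonnegative. That makes $h_C$ the density of their sum on $[0,1]$, which suffices because every event you use has total length below $z\le1$.
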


\begin{proof}
All vertex coordinates are fixed at $0$ throughout. The switching $\sigma_v$ of
Lemma~\ref{lem:switch} fixes $u=0$ for every vertex $v$, preserves volume, and maps feasible
configurations to feasible ones. At a terminal of $H$ it exchanges $d_0(H)$ and $d_1(H)$, so the two
have the same distribution; at an internal vertex of a chain it changes the parities of the two
segments at that vertex. By Lemma~\ref{lem:parity}(b), $d_0+d_1\ge1$ for every feasible piece.

\emph{Chains.} Because the vertex coordinates are fixed, the segments of a chain $C$ are
independent. For $q\le1$ and a set $A$ of segments, let $E_A(q)$ be the event
$\sum_{j\in A}d_1(\sigma_j)+\sum_{j\notin A}d_0(\sigma_j)<q$. Then $d_0(C)<q$ exactly when $E_A(q)$
holds for some even $A$, and $d_1(C)<q$ exactly when it holds for some odd $A$. Two such events for
different sets $A$ and $A'$ of the same parity cannot hold together, because every segment in
$A\triangle A'$ contributes $d_0+d_1\ge1$ to the sum of the two left-hand sides, so that sum is at
least $|A\triangle A'|\ge2\ge2q$, whereas it is less than $2q$. Switching at internal vertices maps $E_A(q)$ onto $E_\emptyset(q)$
for every even $A$ and onto $E_{\{\sigma_1\}}(q)$ for every odd $A$, and, because $d_1(\sigma_1)$ and
$d_0(\sigma_1)$ have the same distribution, the last two events have the same volume. By
independence, the volume of $E_\emptyset(q)$ is $\int_{w_1+\dots+w_k<q}\prod_jm^0_{\sigma_j}(w_j)\,dw$,
which is $\int_0^qh_C$ because $q\le1$. Hence $d_0(C)<q$ and $d_1(C)<q$ each have volume $D_C(q)$.
The same argument, applied to sets of different parity, shows that $d_0(C)+d_1(C)\ge1$, so the
events $d_0(C)<z$ and $d_1(C)<1-z$ are disjoint, and $S_C(z)$ is the volume of the event
$d_0(C)\ge z$ and $d_1(C)\ge1-z$.

\emph{Edge-closed P-nodes.} A configuration is feasible exactly when every chain is feasible and
$d_0(C_j)\ge z_{st}$ and $d_1(C_j)\ge1-z_{st}$ for every $j$: these are the cycles through $st$,
and they imply $d_0(C_i)+d_1(C_j)\ge1$ for the cycles through two chains. On feasible
configurations $d_0(H)=z_{st}$ by Lemma~\ref{lem:parity}(c), which gives (c).

\emph{Thetas.} Put $\ell_j=1-d_1(C_j)$. A configuration is feasible exactly when every chain is
feasible and $d_0(C_i)\ge\ell_j$ for all $i\ne j$. Off a set of measure zero, $d_0(H)=\min_id_0(C_i)$
is attained by a unique $i$; if its value is $w$, feasibility means $d_0(C_j)\ge w$ and $\ell_j\le w$
for all $j\ne i$, because $\ell_i\le d_0(C_i)\le d_0(C_j)$. This gives $m^0_H$\,. For the mass, let
$M=\max_j\ell_j\le1$. If $M\le0$, every configuration of feasible chains is feasible, which gives
$\prod_jS_{C_j}(0)$. Otherwise, off a set of measure zero, $M=\ell_i=z\in(0,1]$ for a unique $i$,
whose density is $D_{C_i}'(1-z)$, and feasibility means $d_0(C_j)\ge z$ and $\ell_j\le z$ for
$j\ne i$, which gives the integral.

\emph{Blocks.} The root of the decomposition of a $2$-connected block is edge-closed, and
integrating $m^0$ over $z_{st}$ gives the volume of the fiber of $\PP(B)$ over $u=0$, that is,
$\vol\mathrm{Cut}(B)$. Blocks share no edge coordinates, so the fiber of $\PP(G)$ over $u=0$ is the
product of the fibers of its blocks, and a bridge contributes $[0,1]$.

\emph{Complexity.} By induction, $m^0_H$ has degree less than $|E(H)|$, and $h_C$ and $S_C$ have
degree at most $|E(C)|$. A convolution or product of polynomials of degrees $a$ and $b$ costs
$O(ab)$, and a substitution $z\mapsto1-z$ costs $O(a^2)$. At a chain, the convolutions and
substitutions cost $O(|E(C)|^2)$. At a P-node $H$, the products $\prod_{j\ne i}S_{C_j}$ for all $i$
are obtained from prefix and suffix products with $O(r)$ products, each with one factor of degree
at most $|E(C_j)|$, so they cost $O(|E(H)|^2)$. The edge sets of the pieces of the decomposition form a laminar
family (any two are disjoint or nested) of depth at most $m$, so $\sum_H|E(H)|\le m^2$ and
$\sum_H|E(H)|^2\le m^3$. For the bit lengths, as in the proof of Proposition~\ref{prop:bits}, a
convolution, a substitution, or an integration adds $O(m)$ to the bit length, because the
coefficients $a!\,b!/(a+b+1)!$ and the binomial coefficients involved have $O(m)$ bits, and a
product adds the bit lengths of its factors; by induction, the coefficients at $H$ have bit length
$O(m\,|E(H)|)$.
\end{proof}

For example, for $r\ge2$, $K_{2,r}$ is a theta with $r$ chains of two edges. For such a chain, $h=w$,
$D(q)=q^2$, and $S(z)=2z(1-z)$, so, with $S(0)=0$,
\[
G^0=r\int_0^12(1-z)\bigl(2z(1-z)\bigr)^{r-1}dz=r\,2^r\int_0^1z^{r-1}(1-z)^r\,dz.
\]
Evaluating the Beta integral gives the following.

\begin{corollary}\label{cor:K2r}
For every $r\ge1$, $\vol\mathrm{Cut}(K_{2,r})=2^r(r!)^2/(2r)!$.
\end{corollary}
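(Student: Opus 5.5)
We will read the corollary off the Beta integral just displayed, treating $r=1$ separately since the displayed formula was derived for $r\ge2$. For $r\ge2$, Theorem~\ref{thm:cutG}(d) applies to $K_{2,r}$, decomposed with terminals the two vertices of degree $r$. Each chain is a path of two edges, so by (a) and (b) we have $G^0_C=1$ and $h_C=1\star1=w$. It follows that $D_C(q)=2\int_0^q w\,dw=q^2$ and
\[
S_C(z)=1-z^2-(1-z)^2=2z(1-z).
\]
We also need $D_C'(1-z)=2(1-z)$. In the mass formula of (d), the first term vanishes because $S_C(0)=0$. By symmetry the $r$ summands agree, so
\[
G^0=r\int_0^1 2(1-z)\bigl(2z(1-z)\bigr)^{r-1}dz=r\,2^r\int_0^1 z^{r-1}(1-z)^r\,dz .
\]

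We then evaluate the Beta integral:
\[
\int_0^1 z^{r-1}(1-z)^r\,dz=B(r,r+1)=\frac{(r-1)!\,r!}{(2r)!}.
\]
Multiplying by $r\,2^r$ turns $r\,(r-1)!$ into $r!$, which gives $2^r(r!)^2/(2r)!$. Since $K_{2,r}$ is $2$-connected for $r\ge2$, Theorem~\ref{thm:cutG} gives $\vol\mathrm{Cut}(K_{2,r})=G^0$.

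For $r=1$, $K_{2,1}$ is a path with two edges. Both blocks are bridges, so its cut polytope is the full cube $[0,1]^2$, because every $0/1$ vector on a forest is a cut vector. Its volume is $1$, and the formula gives $2\cdot1/2=1$, so the two agree.

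As a sanity check, $r=2$ gives $4\cdot4/24=2/3$. This matches $\mathrm{Cut}(C_4)$, which is the unit $4$-cube minus eight disjoint corner simplices of volume $1/24$ each. The only step needing care is confirming the sign and argument convention $D_C'(1-z)$ in (d), and the reduction of $S_C(0)$ to $0$; the rest is routine.
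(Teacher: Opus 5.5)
Your proof is correct and follows the paper's own derivation. You apply Theorem~\ref{thm:cutG}(d) to $K_{2,r}$ as a theta with $r$ two-edge chains, obtain $h=w$, $D(q)=q^2$, $S(z)=2z(1-z)$ with $S(0)=0$, evaluate the Beta integral $B(r,r+1)$, and treat $r=1$ (a path, volume $1$) separately, exactly as the paper does; the only nuance is that the identification $\vol\mathrm{Cut}(K_{2,r})=G^0$ comes from the definition of $G^0_H$ for the root theta $H=K_{2,r}$, rather than from the theorem's edge-rooted block formula, which is also how the paper uses it.
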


\noindent For $r=1$, $K_{2,1}$ is a path, and both sides equal $1$.

\begin{remark}\label{rem:suspforest}
If $F$ is a forest, then $\nabla F$ is series-parallel, because a $K_4$ minor of $\nabla F$ would
give a $K_3$ minor of $F$. So Theorem~\ref{thm:cutG} computes
$\vol\mathrm{Cut}(\nabla F)=2^{|E(F)|}\vol\PP(F)$ (Corollary~\ref{cor:rmet}) by a recursion that is
independent of the one behind Theorem~\ref{thm:main}. For a star with $n$ edges, $\nabla F$ is the book $K_{1,1,n}$\,, that
is, $n$ triangles sharing one edge; for a path, $\nabla F$ is a fan, a path together with a vertex
adjacent to all of its vertices. For these, it reproduces the closed forms
of~\cite[Theorems~7 and~10]{LeeSkipper20}.
\end{remark}

% ---------------------------------------------------------------------------

\section{How the volume ratio behaves}\label{sec:comp}

The relaxation $\QQ$ is the one that is often used in the practice of mixed-integer optimization,
so it is worth knowing how much of it $\PP$ occupies. We write
\[
\mathrm{ratio}(G)=\frac{\vol\PP(G)}{\vol\QQ(G)}\,,
\]
the fraction of $\QQ(G)$ that survives the odd-cycle inequalities. For a single cycle,
\cite[Theorems~13 and~19]{LeeSkipper20} give
\[
r(\ell):=\mathrm{ratio}(C_\ell)=1-\frac{4^{\ell-1}}{\ell\,A_{2\ell-1}}\,,
\]
where $A_k$ is the number of alternating permutations of $k$ elements; see~\cite{Stanley10}. The first values are $A_1,\dots,A_7=1,1,2,5,16,61,272$, so that, for
example, $r(3)=1-16/(3A_5)=2/3$, and $r(\ell)\to1$ rapidly.
For a disjoint union of cycles, the ratios multiply. The authors of~\cite[Section~7]{LeeSkipper20}
concluded from this, and from Monte Carlo experiments on one family, that odd-cycle inequalities
matter far more for short cycles than for long ones. We prove this in a strong form: in every
graph, the odd-cycle inequalities of a cycle of length $\ell$ cut off at most a fraction
$2^{\ell-1}/\ell!$ of $\QQ(G)$, and this bound is sharp (Section~\ref{sec:cap}). The rest of the
section works with the fibers of $\PP(G)$ and $\QQ(G)$ over a single vertex
(Section~\ref{sec:marginals}). With them, we show that a triangle hanging from a vertex always
cuts off at least as much as any $4$-cycle can, and that a hanging cycle of any length matters
about as much as it does on its own (Section~\ref{sec:hangtri}). We also show that the ratios
do not multiply once cycles share a vertex: a long cycle lowers the ratio of its neighbor by a
definite factor (Section~\ref{sec:long}), and in a large flower each cycle lowers the ratio
strictly more than on its own (Section~\ref{sec:flowers}). Finally, in the worst case, the odd-cycle
inequalities of short cycles alone leave $\PP(G)$ an exponentially small fraction of the
relaxation that they define, because the small fractions that many long cycles cut off compound
(Section~\ref{sec:short}).

\subsection{Short cycles cut more}\label{sec:cap}
We say that a set of inequalities \emph{cuts off} the fraction of $\QQ(G)$ that violates at least
one of them. Our main result on the volume ratio holds for every graph, not only for
series-parallel ones, and it makes precise the heuristic of Lee and Skipper that odd-cycle
inequalities matter far more for short cycles than for long ones.

\begin{theorem}\label{thm:cap}
Let $G$ be a graph, and let $C$ be a cycle of $G$ of length $\ell$. Then each odd-cycle inequality
of $C$ cuts off at most a fraction $1/\ell!$ of $\QQ(G)$, and all of them together cut off at most
a fraction $2^{\ell-1}/\ell!$. In particular, if $G$ is series-parallel, then
\[
\mathrm{ratio}(G)\ge1-\sum_C\frac{2^{|C|-1}}{|C|!}\,,
\]
where $C$ ranges over the cycles of $G$. The bound is sharp: if $G_k$ is obtained from $C_\ell$ by
attaching $k$ pendant edges at every vertex, then
$\mathrm{ratio}(G_k)\to1-2^{\ell-1}/\ell!$ as $k\to\infty$.
\end{theorem}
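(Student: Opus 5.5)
Work fiberwise over the vertex coordinates. By Lemma~\ref{lem:edge}, once $u\in[-\tfrac12,\tfrac12]^V$ is fixed, the fiber of $\QQ(G)$ in cut coordinates is a box $\prod_e[\clo(e),\chii(e)]$, where $z_e$ ranges over an interval of length $L_e=1-2\max(|u_i|,|u_j|)$. The odd-cycle inequality $\OC(A)$ of $C$ involves only the $\ell$ coordinates $z_e$, $e\in E(C)$ (Lemma~\ref{lem:oc}). So, for fixed $u$, the fraction of the box that it cuts off equals the fraction of the $\ell$-dimensional box $\prod_{e\in C}[\clo(e),\chii(e)]$ on which $\sum_e w_e<1$, where $w_e=z_e$ or $1-z_e$ according to the labelling. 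By Theorem~\ref{thm:chain}(b), applied to $C$ viewed as a chain of single edges closed by one of its edges (or directly), this cut region is described in the lengths $w_e$ by the constraints $|\tilde u_j-\tilde u_{j-1}|\le w_j$ and $\sum w_j<1$. Here each $w_e$ lies in an interval of length $L_e$ whose lower end is $\lambda_e=|\tilde u_{j}-\tilde u_{j-1}|$.

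The key pointwise estimate is as follows. Put $w_e=\lambda_e+t_e$ with $0\le t_e\le L_e$. The cut region is contained in $\{t\ge0,\ \sum t_e<1-\sum\lambda_e\}$, a simplex of volume $(1-\Lambda)_+^\ell/\ell!$ with $\Lambda=\sum\lambda_e$. I would then show $(1-\Lambda)\le \min_e L_e$, or more usefully that $1-\Lambda\le L_e$ for every $e$ of $C$. Indeed, going around the cycle from an endpoint of $e$ gives $\Lambda-\lambda_e\ge|\tilde u_i\pm\tilde u_j|$ for the appropriate sign, and together with $\lambda_e$ this should give $\Lambda\ge 2\max(|u_i|,|u_j|)$, exactly as in the ``going once around'' step of Theorem~\ref{thm:chain}(b). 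Hence the cut volume is at most $\prod_e L_e/\ell!$, namely the fraction $1/\ell!$ of the box. Integrating over $u$ (and over the coordinates of the other edges, which are untouched) gives the fraction $1/\ell!$ of $\vol\QQ(G)$ for each inequality. There are $2^{\ell-1}$ odd subsets $A$, so a union bound gives $2^{\ell-1}/\ell!$. Theorem~\ref{thm:padberg} then gives the series-parallel lower bound by a union bound over cycles.

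For sharpness, take $G_k$. In the union bound over the $2^{\ell-1}$ labellings of $C$ there is no loss, since the cut regions are disjoint by Theorem~\ref{thm:chain}(a). So I need the fiberwise fraction to tend to $1/\ell!$ per labelling. With $k$ pendant edges at each cycle vertex, the weight of the vertex $v$ is $\omega(u)\propto(\tfrac12-2u^2)^k$ by \eqref{eq:bridge}. Under the normalized measure, $u$ concentrates at $0$ as $k\to\infty$. At $u=0$ every $\lambda_e=0$, every $L_e=1$, and each cut region is exactly the simplex $\{\sum t_e<1\}$, of fraction exactly $1/\ell!$. The fraction is continuous in $u$ and bounded, so dominated convergence gives $\mathrm{ratio}(G_k)\to1-2^{\ell-1}/\ell!$.

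The main obstacle is the pointwise inequality $1-\Lambda\le L_e$ for all $e$ simultaneously. That is what converts the simplex volume $(1-\Lambda)^\ell/\ell!$ into a bound relative to $\prod L_e$. I would verify it by the triangle inequality around the cycle with the switched signs $\tilde u$, checking both signs of the endpoint combination.
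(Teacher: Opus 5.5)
Your proposal is correct and follows the paper's proof. The steps match: the fiberwise box from Lemma~\ref{lem:edge}; the simplex bound $(1-\Lambda)_+^\ell/\ell!$, with $1-\Lambda\le L_e$ for every edge $e$ obtained by going around the cycle with the switched signs $\tilde u$; integration over $u$ and a union bound over the $2^{\ell-1}$ odd sets; and, for sharpness, disjointness of the cut regions plus concentration of the vertex coordinates at $u=0$ under the weights $(\tfrac12-2u_w^2)^k$.

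The one step you leave implicit is that concentration. The paper proves it by comparing the weight outside $\{\max_w|u_w|<\delta\}$, which is at most $(1-4\delta^2)^k$ times its value at $0$, with the weight on $\{\max_w|u_w|<\delta/k\}$. The limit then follows because the fiberwise fraction is bounded and continuous at $u=0$. That is a concentration-of-measure argument, not dominated convergence, since the measures themselves change with $k$.
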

\begin{proof}
By Lemma~\ref{lem:edge}, $\QQ(G)$ consists of the points $(u,z)$ with $u\in[-\tfrac12,\tfrac12]^V$
and $z_e$ in an interval $I_e(u)$ of length $L_e(u)=1-2\max(|u_i|,|u_j|)$ for every edge $e=ij$,
independently over the edges. Fix $u$ and an odd set $A\subseteq E(C)$, and write the cycle as
$v_0,v_1,\dots,v_\ell=v_0$ with $e_j=v_{j-1}v_j$\,. Put $w_j=1-z_{e_j}$ for $e_j\in A$ and
$w_j=z_{e_j}$ otherwise, and let $\tilde u_j=\pm u_{v_j}$\,, with $\tilde u_0=u_{v_0}$ and with
the sign changing exactly across the edges of $A$; because $|A|$ is odd, $\tilde u_\ell=-u_{v_0}$\,. By Lemmas~\ref{lem:edge} and~\ref{lem:oc}, $w_j$ ranges over an interval
$[a_j,a_j+L_{e_j}]$ with $a_j=|\tilde u_j-\tilde u_{j-1}|$, and $\OC(A)$ fails exactly when
$\sum_jw_j<1$. Let $R_A=1-\sum_ja_j$\,. Going around $C$ from $v_j$ back to $v_{j-1}$\,, the values
of $\tilde u$ run from $\tilde u_j$ to $-\tilde u_{j-1}$\,, so $\sum_{i\ne j}a_i\ge|\tilde u_j+\tilde
u_{j-1}|$, and therefore
\[
R_A\le1-|\tilde u_j-\tilde u_{j-1}|-|\tilde u_j+\tilde u_{j-1}|=L_{e_j}
\qquad\text{for every }j.
\]
Hence the upper ends of the intervals never bind, and the set of $z|_C$ in the box
$\prod_{e\in C}I_e(u)$ that violates $\OC(A)$ is a simplex of volume
\[
\frac{(R_A)_+^\ell}{\ell!}\le\frac1{\ell!}\prod_{e\in C}L_e(u).
\]
We multiply by $\prod_{e\notin C}L_e(u)$ and integrate over $u$. This shows that $\OC(A)$ cuts
off at most $\vol\QQ(G)/\ell!$ in cut coordinates, hence at most the fraction $1/\ell!$. There are
$2^{\ell-1}$ odd sets $A$. If $G$ is series-parallel, then $\PP(G)$ is cut out of $\QQ(G)$ by
the odd-cycle inequalities of all cycles (Theorem~\ref{thm:padberg}), and the displayed bound
follows by adding the fractions.

For sharpness, the sets on which the various $\OC(A)$ fail are disjoint: for odd $A\ne A'$, the
sum of the two left-hand sides of Lemma~\ref{lem:oc} is at least $|A\triangle A'|\ge2$. So, conditionally on $u$, the fraction of the box that is cut
off is exactly $\Lambda(u)=\sum_A(R_A)_+^\ell\big/\bigl(\ell!\prod_{e\in C}L_e(u)\bigr)$, which is
continuous near $u=0$, with $\Lambda(0)=2^{\ell-1}/\ell!$. A pendant edge at $w$ contributes the
factor $\tfrac12-2u_w^2$, by~\eqref{eq:bridge}, so in $\QQ(G_k)$ the density of $u|_C$ is
proportional to $\prod_{e\in C}L_e(u)\prod_{w\in V(C)}(\tfrac12-2u_w^2)^k$. Outside the cube
$\{\max_w|u_w|<\delta\}$, the second product is at most $(1-4\delta^2)^k$ times its value at
$u=0$, and inside the cube $\{\max_w|u_w|<\delta/k\}$ it is at least $(1-4\delta^2/k^2)^{k\ell}$
times that value; so the density concentrates at $u=0$ as $k\to\infty$, and the fraction of $\QQ(G_k)$ cut off
by the odd-cycle inequalities of $C_\ell$ tends to $\Lambda(0)$. Because $C_\ell$ is the only cycle
of $G_k$\,, from Theorem~\ref{thm:padberg}, this fraction is $1-\mathrm{ratio}(G_k)$.
\end{proof}

Lengthening a cycle from $\ell$ to $\ell+1$ edges multiplies the cap $2^{\ell-1}/\ell!$ by
$2/(\ell+1)$. On its own, a cycle cuts off only $1-r(\ell)$; inside a graph, the fraction that it cuts off
can be greater, but, by Theorem~\ref{thm:cap}, never more than the cap.

The proof of sharpness shows more. Let $G_k$ be obtained from a series-parallel graph $G$ by
attaching $k$ pendant edges at every vertex. Each pendant edge at $v$ multiplies the fibers over
the vertex coordinates $u$ of $G$ by $\tfrac12-2u_v^2$, which is largest at $u_v=0$, that is, at
$x_v=\tfrac12$; so, as $k\to\infty$, the distribution of the vertex coordinates concentrates at $u=0$.
There, the fiber of $\QQ(G)$ is the unit cube and that of $\PP(G)$ is $\mathrm{Cut}(G)$
(Section~\ref{sec:cutG}), and, because the fiber volumes are continuous at $u=0$,
$\mathrm{ratio}(G_k)\to\vol\mathrm{Cut}(G)$. For $G=C_\ell$\,, this is the sharpness statement,
with $\vol\mathrm{Cut}(C_\ell)=1-2^{\ell-1}/\ell!$; see also~\cite[Proposition~3.4]{ADD26}. At $u=0$, Theorem~\ref{thm:cap} also gives
$\vol\mathrm{Cut}(G)\ge1-\sum_C2^{|C|-1}/|C|!$\,.

\subsection{Fiber marginals}\label{sec:marginals}
For a graph $G$ with a vertex $v$ and for $u\in[-\tfrac12,\tfrac12]$, let $\Phi^\PP_{G,v}(u)$ and
$\Phi^\QQ_{G,v}(u)$ be the $(d-1)$-dimensional volumes of the fibers of $\PP(G)$ and $\QQ(G)$ over
$u_v=u$, and let $\rho_{G,v}=\Phi^\PP_{G,v}/\Phi^\QQ_{G,v}$ on $(-\tfrac12,\tfrac12)$. For a series-parallel graph, they are,
up to constant factors, the products of the marginals of Section~\ref{sec:cutvertex} over the
blocks at $v$; in particular, they are polynomials in $|u|$.
Because $\mathrm{ratio}(G)=\int\Phi^\PP_{G,v}\big/\!\int\Phi^\QQ_{G,v}$\,, the ratio is the average of
$\rho_{G,v}$ against the probability density proportional to $\Phi^\QQ_{G,v}$\,.

If $G_1$ and $G_2$ are series-parallel graphs that share exactly the vertex $v$ and no edge, then
every cycle of $G=G_1\cup G_2$ lies in $G_1$ or in $G_2$\,, so by Theorem~\ref{thm:padberg} the fiber
of $\PP(G)$ over $u_v=u$ is the product of the fibers of $\PP(G_1)$ and $\PP(G_2)$, and the same
holds for $\QQ$. Hence
\begin{equation}\label{eq:fiberprod}
\Phi^\PP_{G,v}=\Phi^\PP_{G_1,v}\,\Phi^\PP_{G_2,v}\,,\qquad
\Phi^\QQ_{G,v}=\Phi^\QQ_{G_1,v}\,\Phi^\QQ_{G_2,v}\,,\qquad
\rho_{G,v}=\rho_{G_1,v}\,\rho_{G_2,v}\,.
\end{equation}

We use Chebyshev's integral inequality in the following form. Let $\mu$ be a probability measure
on $[0,\tfrac12]$, and let $\varphi$ be nondecreasing and $\psi$ nonincreasing. Then
$\int\varphi\psi\,d\mu\le\int\varphi\,d\mu\int\psi\,d\mu$, and equality forces $\varphi$ or $\psi$
to be constant $\mu$-almost everywhere. This is classical (see, for
example,~\cite[Section~2.17]{HLP52}); it follows by integrating the pointwise inequality
$(\varphi(x)-\varphi(y))(\psi(x)-\psi(y))\le0$ against $\mu\times\mu$. In probabilistic terms, it
says that a nondecreasing and a nonincreasing function of the same random variable are negatively
correlated. We apply it to even functions of $u$, regarded as functions of $|u|$.

\begin{lemma}\label{lem:unimodal}
For every graph $G$ and every vertex $v$, the functions $\Phi^\PP_{G,v}$ and $\Phi^\QQ_{G,v}$ are
even and nonincreasing in $|u|$. If $v$ is not isolated, they vanish at $u=\pm\tfrac12$ and are
positive inside. If $G$ has a cycle, then $\rho_{G,v}(0)<1$.
\end{lemma}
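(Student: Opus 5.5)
Evenness comes from the switching $\sigma_v$ of Lemma~\ref{lem:switch}. It maps $u_v\mapsto-u_v$, preserves volume, and maps $\PP(G)$ and $\QQ(G)$ onto themselves, so it carries the fiber over $u_v=u$ onto the fiber over $u_v=-u$.

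For monotonicity I would use convexity and central symmetry rather than any structural recursion, since the statement is for every graph. The polytopes $\PP(G)$ and $\QQ(G)$ are convex, so by Brunn's theorem the function $u\mapsto\Phi(u)^{1/(d-1)}$ is concave on the projection $[-\tfrac12,\tfrac12]$. An even function whose $(d-1)$-th root is concave is maximized at $0$ and nonincreasing in $|u|$. I would state Brunn--Minkowski explicitly, since the paper has not used it.

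Vanishing at $u=\pm\tfrac12$ follows from Lemma~\ref{lem:edge}. If $v$ has an incident edge $e$, then at $|u_v|=\tfrac12$ the interval for $z_e$ has length $0$, so the fiber is contained in a hyperplane and has $(d-1)$-volume zero. Positivity for $|u|<\tfrac12$ comes from concavity of $\Phi^{1/(d-1)}$ together with $\Phi(0)>0$. For $\PP$, the point with all $x_w=\tfrac12$ and all $y$ equal to the average of the vertices is interior, since $\PP$ is full-dimensional, so the fiber over $u_v=0$ has positive volume. A concave nonnegative function positive at $0$ is positive on the open interval. The same argument applies to $\QQ\supset\PP$.

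The last claim, $\rho_{G,v}(0)<1$, needs a set of positive $(d-1)$-volume inside the $\QQ$-fiber over $u_v=0$ that violates some odd-cycle inequality. Take a cycle $C$ and $A=\{e\}$ for one edge $e\in C$. Set all $u_w=0$, which makes every interval $[0,1]$, and choose $z_e$ near $1$ and the other $z_{e'}$ on $C$ near $0$. Then $\sum_{A}(1-z)+\sum_{\text{else}}z<1$, and this persists under small perturbations of all other coordinates with $u_v$ held at $0$. Such points lie in $\QQ$, by Lemma~\ref{lem:edge} and the slack in the box, but not in $\PP$, since $\OC(A)$ is valid for $\PP(G)$ for every graph. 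They form an open set of the fiber, so $\Phi^\PP(0)<\Phi^\QQ(0)$.

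The main obstacle is justifying monotonicity for arbitrary graphs without series-parallel product formulas. Brunn's concavity plus evenness is the route I would commit to. The only delicate point is checking that the projection onto $u_v$ is all of $[-\tfrac12,\tfrac12]$, which holds because the vertices of $\PP$ include $x_v=0$ and $x_v=1$.
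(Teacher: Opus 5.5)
Your proposal is correct and follows essentially the same route as the paper: evenness from the switching $\sigma_v$, monotonicity from Brunn's concavity theorem applied to the sections $\{u_v=u\}$, vanishing at $u=\pm\tfrac12$ from the degenerate edge interval, positivity from full-dimensionality, and $\rho_{G,v}(0)<1$ from an open set near $u=0$, with $z_f$ near $1$ and the other edges of the cycle near $0$, that violates $\OC(\{f\})$. Your barycenter argument for $\Phi(0)>0$ and your remark that odd-cycle inequalities are valid for $\PP(G)$ in every graph spell out steps that the paper states only briefly.
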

\begin{proof}
We use Brunn's concavity theorem, which is a consequence of the Brunn--Minkowski inequality;
see~\cite{Gardner02}. The theorem states that the $(d-1)$st root of the volume of the sections of a convex body in $\R^d$ by a
family of parallel hyperplanes is a concave function of the position of the hyperplane; we section by the hyperplanes $\{u_v=u\}$. The switching $\sigma_v$ of
Lemma~\ref{lem:switch} maps the fiber over $u$ onto the fiber over $-u$ and preserves volume, so
both functions are even. An even function on $[-\tfrac12,\tfrac12]$ that has a concave positive
power is nonincreasing in $|u|$. At $u=\pm\tfrac12$, each edge $vw$ forces $y_{vw}=0$ or
$y_{vw}=x_w$\,, so the fiber has lower dimension; inside, the fibers have positive volume, because
$\PP(G)$ and $\QQ(G)$ are full-dimensional. For the last claim, let $C$ be a cycle and $f$ an edge
of $C$. By Lemma~\ref{lem:edge}, the points with all $u_w$ close to $0$, with $z_f$ close to $1$,
with $z_e$ close to $0$ for $e\in E(C)\setminus f$, and with $z_e$ close to $\tfrac12$ for all
other edges lie in $\QQ(G)$; by Lemma~\ref{lem:oc}, they violate $\OC(\{f\})$. They form an open
set that meets the fiber over $u_v=0$ in a set of positive volume, so
$\Phi^\PP_{G,v}(0)<\Phi^\QQ_{G,v}(0)$.
\end{proof}

\subsection{Hanging cycles}\label{sec:hangtri}
For a cycle that hangs from a single vertex, Theorem~\ref{thm:cap} has a counterpart from
below. We begin with the triangle, whose marginals are explicit.

\begin{proposition}\label{prop:rhotri}
For every vertex $v$ of $C_3$ and with $t=|u|$,
\[
\Phi^\PP_{C_3,v}(u)=\frac{(1-4t^2)^2}{96}\,,\qquad
\Phi^\QQ_{C_3,v}(u)=\frac{(1-2t)^2(1+4t+2t^2-8t^3)}{60}\,,
\]
\[
\rho_{C_3,v}(u)=\frac{5(1+2t)^2}{8\,(1+4t+2t^2-8t^3)}\,,
\]
and $\rho_{C_3,v}$ is strictly increasing in $t$, from $5/8$ at $x_v=\tfrac12$ to $1$ at
$x_v\in\{0,1\}$.
\end{proposition}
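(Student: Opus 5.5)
The plan is to compute both marginals directly in cut coordinates, using the Jacobian factor $2^{-m}=2^{-3}$ of \eqref{eq:jac}, and then to check monotonicity of the ratio by an elementary derivative computation. Write $v$ for the fixed vertex with $u_v=u$ and $t=|u|$, and $a,b$ for the coordinates of the other two vertices. Both marginals are even by Lemma~\ref{lem:switch}, so we may take $u=t\ge0$.

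\emph{The $\QQ$-marginal.} By Lemma~\ref{lem:edge}, the fiber of $\QQ(C_3)$ over $(u_v,a,b)$ is a box whose volume is the product of the three edge lengths. Hence
\[
\Phi^\QQ_{C_3,v}(u)=2^{-3}\int_{-1/2}^{1/2}\!\int_{-1/2}^{1/2}\bigl(1-2\max(t,|a|)\bigr)\bigl(1-2\max(t,|b|)\bigr)\bigl(1-2\max(|a|,|b|)\bigr)\,da\,db .
\]
We fold $a$ and $b$ onto $[0,\tfrac12]$, which gives a factor $4$. We then use the symmetry $a\leftrightarrow b$ to restrict to $a\le b$, which gives a factor $2$. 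The remaining integral splits into the three regions $a\le b\le t$, $a\le t\le b$ and $t\le a\le b$. On each region the integrand is a polynomial, so this is a routine polynomial integration. Equivalently, it is the transfer of Proposition~\ref{prop:qfib}: closing the path $v,c_1,c_2$ by the edge $c_2v$ and integrating out $u_{c_2}$. The computation should return $(1-2t)^2(1+4t+2t^2-8t^3)/60$.

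\emph{The $\PP$-marginal.} The worked example in Section~\ref{sec:example} already gives the state of the triangle in cut coordinates,
\[
S=\tfrac14-u_v^2-u_{c_1}^2-\zeta^2-4u_vu_{c_1}\zeta,\qquad \zeta=z-\tfrac12 .
\]
I would re-derive it from \eqref{eq:close}. Here $Q_{\rm fib}$ comes from Proposition~\ref{prop:qfib}. The two representative cut regions, each with multiplicity $2$ by Lemma~\ref{lem:chainswitch}, are simplices, as in the proof of Theorem~\ref{thm:cap}, and each has volume $(R_A)_+^2/2$ after integrating out $u_{c_2}$.

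Next, integrate $S$ over $u_{c_1}\in[-\tfrac12,\tfrac12]$ and over $z\in[\clo,\chii]$, which in $\zeta$ is the interval $[|u_v-u_{c_1}|-\tfrac12,\ \tfrac12-|u_v+u_{c_1}|]$. By Lemma~\ref{lem:pushforward}, the result is a single even polynomial. In the folded variable it splits only at $|u_{c_1}|=t$. The computation should yield $\Phi_T(u)=(1-4t^2)^2/12$, as stated in Section~\ref{sec:example}, and multiplying by $2^{-3}$ gives $(1-4t^2)^2/96$.

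The main obstacle is this integration, in particular keeping track of the sign-dependent limits $|u_v\mp u_{c_1}|$. I would handle it by splitting into the four sign and size cases of $u_{c_1}$ relative to $t$, and using the evenness of $S$ under $(u_{c_1},\zeta)\mapsto(-u_{c_1},-\zeta)$ to halve the work.

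\emph{The ratio.} Since $1-4t^2=(1-2t)(1+2t)$, the factor $(1-2t)^2$ cancels, and
\[
\rho_{C_3,v}=\frac{60}{96}\cdot\frac{(1+2t)^2}{p(t)}=\frac{5(1+2t)^2}{8\,p(t)},\qquad p(t)=1+4t+2t^2-8t^3 .
\]
The sign of $\rho'$ is the sign of $(1+2t)\bigl(4p-(1+2t)p'\bigr)$. With $p'=4+4t-24t^2$, a direct expansion gives
\[
4p-(1+2t)p'=4t+24t^2+16t^3,
\]
which is positive for $t>0$. So $\rho_{C_3,v}$ is strictly increasing in $t$. Finally, $p(0)=1$ gives $\rho=5/8$ at $t=0$, that is, at $x_v=\tfrac12$, and $p(\tfrac12)=\tfrac52$ gives $\rho=1$ at $t=\tfrac12$, that is, at $x_v\in\{0,1\}$.
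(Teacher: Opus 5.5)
Your proposal is correct. For the $\QQ$-marginal and for the monotonicity you do what the paper does: its substitution $T=1-2t$, $P=1-2|a|$, $Q=1-2|b|$ turns each edge factor into a minimum, and that is your folded three-region split.

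For the $\PP$-marginal, however, you take a genuinely different route. The paper never forms the state of the triangle. It computes the cut-off volume $\Phi^\QQ_{C_3,v}-\Phi^\PP_{C_3,v}$ in one pass: it integrates the $3$-simplex volumes $(R_A)_+^3/3!$ from the proof of Theorem~\ref{thm:cap} over the other two vertex coordinates, identifies the four odd sets by a sign change, and splits the plane according to which positive parts of the excess in $1-R_A=2t+2\,\mathrm{ex}(\xi,\eta)$ are nonzero.

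You instead run the dynamic program, and the re-derivation of $S$ that you promise is short. By Proposition~\ref{prop:box}, the two representative cut regions have volumes $\tfrac16(z-\clo)^2(z+2\clo)$ and $\tfrac16(\chii-z)^2(3-2\chii-z)$. On $|u_v|\ge|u_{c_1}|$ we have $Q_{\rm fib}=(1-2|u_v|)\bigl(\tfrac13+\tfrac23|u_v|-\tfrac23u_v^2-2u_{c_1}^2\bigr)$. When twice the sum of the two volumes is subtracted from it, the non-polynomial terms $\tfrac43|u_v|^3+4|u_v|u_{c_1}^2$ cancel and exactly $S$ remains.

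There is one slip in your wording. For fixed $u_{c_2}$, each cut region is a triangle of area $(R_A)_+^2/2$ in $(z_{vc_2},z_{c_2c_1})$, with $R_A=1-w_{st}-|\tilde u_1-\tilde u_0|-|\tilde u_2-\tilde u_1|$. The integration over $u_{c_2}$ comes after this, not before as you wrote.

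Your halving step is valid, but it needs more than evenness of $S$. The map $(u_{c_1},\zeta)\mapsto(-u_{c_1},-\zeta)$ is switching at $c_1$, which preserves the domain $\Delta$ as well as $S$. The two remaining cases, $0\le u_{c_1}\le t$ and $t\le u_{c_1}\le\tfrac12$, together with the factor $2$ from folding, give $\tfrac43h^2(1-h)^2$ with $h=\tfrac12-t$. This equals $(1-4t^2)^2/12$, as required.

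What each route buys: yours exercises the algorithm and certifies the state $S$ and the weight $\omega_a$ quoted in the worked example. The paper's uses only the simplex observation of Theorem~\ref{thm:cap} and none of the dynamic-programming machinery. It also produces the cut-off volume itself, which is the quantity checked against Proposition~\ref{prop:rhocenter} at $t=0$.
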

\begin{proof}
By evenness we may take $u=t\ge0$. We use the kernel $\chi$ of the proof of
Lemma~\ref{lem:cycmarg} below; by Lemma~\ref{lem:edge}, $\chi(u,a)=\tfrac12\bigl(1-2\max(|u|,|a|)\bigr)$.

\emph{The $\QQ$-marginal.} Integrating out the $y_e$ and the two other vertices gives
$\Phi^\QQ_{C_3,v}(u)=\iint\chi(u,a)\,\chi(a,b)\,\chi(b,u)\,da\,db$. The integrand is even in $a$
and in $b$, and with $T=1-2t$, $P=1-2|a|$, and $Q=1-2|b|$, each factor $2\chi$ is the minimum of
the two corresponding variables. Hence
\[
\Phi^\QQ_{C_3,v}(u)=\frac18\int_0^1\!\!\int_0^1\min(T,P)\min(P,Q)\min(Q,T)\,dP\,dQ
=\frac{T^2(5-5T^2+2T^3)}{120}\,,
\]
where the integral is evaluated by splitting the square according to the order of $P$, $Q$, and
$T$. Substituting $T=1-2t$ gives the stated formula.

\emph{The $\PP$-marginal.} By Theorem~\ref{thm:padberg}, $\Phi^\QQ_{C_3,v}-\Phi^\PP_{C_3,v}$ is the
fiber over $u_v=t$ of the four disjoint sets on which an odd-cycle inequality fails. In the
notation of the proof of Theorem~\ref{thm:cap}, with $v_0=v$, such a set meets the box over a
point $(u_{v_1},u_{v_2})$ in a simplex of volume $(R_A)_+^3/3!$ in the coordinates $z$, where,
with $\xi=\tilde u_1$ and $\eta=\tilde u_2$\,,
\[
1-R_A=|\xi-t|+|\eta-\xi|+|\eta+t|=2t+2\bigl((\xi-t)_++(\eta-\xi)_++(-t-\eta)_+\bigr);
\]
the identity holds because $|w|=-w+2w_+$ for each of the three differences. The change of
variables from $(u_{v_1},u_{v_2})$ to $(\xi,\eta)$ only changes signs, and $R_A>0$ forces
$|\xi|,|\eta|<\tfrac12$, so all four sets contribute the same integral over $\R^2$. Converting the $z_e$
into the $y_e$ multiplies by $2^{-3}$, so
\[
\Phi^\QQ_{C_3,v}-\Phi^\PP_{C_3,v}=\frac{4}{8\cdot3!}\,J,\qquad
J=\iint\bigl(R-2\,\mathrm{ex}(\xi,\eta)\bigr)_+^3\,d\xi\,d\eta,
\]
with $R=1-2t$ and the excess $\mathrm{ex}=(\xi-t)_++(\eta-\xi)_++(-t-\eta)_+$\,. At most two of the three positive parts
are nonzero, because $\xi>t$ and $\eta>\xi$ imply $\eta>-t$. We split the plane accordingly:
\begin{itemize}[nosep]
\item where all three vanish, that is, on the triangle $t\ge\xi\ge\eta\ge-t$ of area $2t^2$, the
integrand is $R^3$;
\item where exactly one is positive, with value $a$, the other variable ranges over an interval of
length $2t+a$, and each of the three cases contributes
$\int_0^\infty(2t+a)(R-2a)_+^3\,da=\tfrac14tR^4+\tfrac1{80}R^5$;
\item where exactly two are positive, with values $a$ and $b$, these are coordinates with
Jacobian $1$, and each of the three cases contributes
$\iint_{a,b>0}(R-2a-2b)_+^3\,da\,db=\tfrac1{80}R^5$.
\end{itemize}
Hence $J=2t^2R^3+\tfrac34tR^4+\tfrac3{40}R^5$, and
\[
\Phi^\PP_{C_3,v}(u)=\frac{(1-2t)^2(1+4t+2t^2-8t^3)}{60}-\frac{(1-2t)^3(3+18t+32t^2)}{480}
=\frac{(1-4t^2)^2}{96}\,.
\]
At $t=0$, the difference is $1/160$, in agreement with Proposition~\ref{prop:rhocenter}.

\emph{Monotonicity.} The derivative of $(1+2t)^2/(1+4t+2t^2-8t^3)$ has numerator
$4t(1+2t)(1+6t+4t^2)$, which is positive for $t>0$.
\end{proof}

The odd-cycle inequalities of a triangle thus cut off volume almost entirely near
$x_v=\tfrac12$\,, where the fractional vertex of~\cite[Theorem~15]{LeeSkipper20} lies, and by
Lemma~\ref{lem:unimodal} every graph glued at $v$ reweights the triangle toward exactly that
region. So a hanging triangle cuts off at least as much as it does on its own, which is as much
as any $4$-cycle can cut off anywhere.

\begin{corollary}\label{cor:pendanttri}
Let $G$ be obtained by gluing a triangle $T$ at one of its vertices $v$ to an arbitrary graph $B$.
Then the odd-cycle inequalities of $T$ cut off at least a fraction $1/3$ of $\QQ(G)$, with strict
inequality if $B$ has an edge at $v$; if $B$ is a forest, this says that
$\mathrm{ratio}(G)\le2/3$. Consequently, $T$ cuts off at least as large a fraction of $\QQ(G)$ as
any $4$-cycle cuts off from $\QQ(G')$, for any graph $G'$, and at least $5/2$ times as large a
fraction as any longer cycle.
\end{corollary}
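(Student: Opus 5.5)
The plan is to write the fraction cut off by $T$ as an average of the pointwise ratio $\rho_{T,v}$ and then apply Chebyshev's integral inequality. Since $T$ and $B$ share only $v$, the cycles of $T$ are exactly its one triangle. The fiber of $\QQ(G)$ over $u_v=u$ is the product of the fibers of $\QQ(T)$ and $\QQ(B)$, as in \eqref{eq:fiberprod}, and its part that satisfies the odd-cycle inequalities of $T$ is the product of the fiber of $\PP(T)$ with that of $\QQ(B)$. (This needs no series-parallel hypothesis on $B$, because only the inequalities of $T$ are involved.) Hence the fraction of $\QQ(G)$ that survives the inequalities of $T$ is
\[
\frac{\int\Phi^\PP_{T,v}\,\Phi^\QQ_{B,v}\,du}{\int\Phi^\QQ_{T,v}\,\Phi^\QQ_{B,v}\,du}
=\int\rho_{T,v}\,d\mu,
\]
where $\mu$ is the probability measure with density proportional to $\Phi^\QQ_{T,v}\Phi^\QQ_{B,v}$. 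We regard $\mu$ as a measure on $t=|u|\in[0,\tfrac12]$. Let $\mu_0$ be the measure with density proportional to $\Phi^\QQ_{T,v}$ alone. Then $\int\rho_{T,v}\,d\mu_0=\mathrm{ratio}(C_3)=2/3$, and $\mu$ is $\mu_0$ reweighted by $\psi=\Phi^\QQ_{B,v}$.

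Next apply Chebyshev's inequality with respect to $\mu_0$, taking $\varphi=\rho_{T,v}$ and $\psi=\Phi^\QQ_{B,v}$. Proposition~\ref{prop:rhotri} shows that $\varphi$ is strictly increasing in $t$, and Lemma~\ref{lem:unimodal} shows that $\psi$ is nonincreasing. Hence $\int\varphi\psi\,d\mu_0\le\int\varphi\,d\mu_0\int\psi\,d\mu_0$. Dividing by $\int\psi\,d\mu_0$ gives $\int\rho\,d\mu\le2/3$, so $T$ cuts off at least $1/3$. For strictness, equality requires $\varphi$ or $\psi$ to be constant $\mu_0$-a.e. The function $\varphi$ is not constant. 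If $B$ has an edge at $v$, Lemma~\ref{lem:unimodal} says $\psi$ vanishes at $t=\tfrac12$ and is positive inside, so it is not constant, and the inequality is strict. If $B$ is a forest, then $G$'s only cycle is $T$. By Theorem~\ref{thm:padberg}, which applies because $G$ is series-parallel (a triangle glued to a forest), the surviving fraction is $\mathrm{ratio}(G)$, so $\mathrm{ratio}(G)\le2/3$. One detail to check is that $B$ with no edge at $v$ gives a constant $\psi$, where equality is allowed, consistent with the statement.

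The comparisons follow from Theorem~\ref{thm:cap}. A $4$-cycle cuts off at most $2^3/4!=1/3$ of $\QQ(G')$, and a cycle of length $\ell\ge5$ cuts off at most $2^{\ell-1}/\ell!\le16/120=2/15$, since this cap decreases in $\ell$ for $\ell\ge2$. Since $1/3=\tfrac52\cdot\tfrac2{15}$, this gives the factor $5/2$.

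The main obstacle is the factorization step: justifying that the region cut off by $T$'s inequalities, fiberwise over $u_v$, is exactly $(\Phi^\QQ_T-\Phi^\PP_T)\Phi^\QQ_B$ when $B$ is arbitrary. This holds because the inequalities of $T$ involve only the coordinates of $T$. We also need $\Phi^\PP_{T,v}$ to equal the fiber of $\QQ(T)$ cut by the odd-cycle inequalities, which is Theorem~\ref{thm:padberg} applied to $C_3$. Once this is in place, the rest is Chebyshev's inequality.
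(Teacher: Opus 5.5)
Your proposal is correct and follows the paper's proof essentially step for step. You factor the fibers over $u_v$, write the surviving fraction as the average of $\rho_{T,v}$ against $\Phi^\QQ_{T,v}\Phi^\QQ_{B,v}$, apply Chebyshev's inequality with Proposition~\ref{prop:rhotri} and Lemma~\ref{lem:unimodal} to compare it with the average against $\Phi^\QQ_{T,v}$ alone, which is $r(3)=2/3$, and read off the comparisons from the caps $1/3$ and $2/15$ of Theorem~\ref{thm:cap}.
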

\begin{proof}
The fiber of $\QQ(G)\cap\{\text{odd-cycle inequalities of }T\}$ over $u_v=u$ is the product of the
fiber of $\PP(T)$ and the fiber of $\QQ(B)$. Hence the fraction of $\QQ(G)$ that satisfies them is the average of
$\rho_{T,v}$ against the density proportional to $\Phi^\QQ_{T,v}\Phi^\QQ_{B,v}$\,. By
Proposition~\ref{prop:rhotri}, $\rho_{T,v}$ is strictly increasing in $|u|$, and by
Lemma~\ref{lem:unimodal}, $\Phi^\QQ_{B,v}$ is nonincreasing, and not constant if $B$ has an edge
at $v$. Chebyshev's inequality bounds the average by the average against $\Phi^\QQ_{T,v}$\,, which
is $r(3)=2/3$. The comparisons follow from Theorem~\ref{thm:cap}.
\end{proof}

For longer cycles, what the odd-cycle inequalities of a hanging cycle do is governed, up to
explicit constant factors, by what they do on the cycle alone.

\begin{theorem}\label{thm:hangcyc}
Let $B$ be a series-parallel graph with a vertex $v$, let $d_B=|V(B)|+|E(B)|$, let $\ell\ge3$, and
let $G$ be obtained by gluing $C_\ell$ to $B$ at $v$. Put
$h^\PP_B=\Phi^\PP_{B,v}(0)/\vol\PP(B)$ and $h^\QQ_B=\Phi^\QQ_{B,v}(0)/\vol\QQ(B)$; then
$h^\PP_B,h^\QQ_B\le d_B$\,. The following hold.
\begin{enumerate}[label=\textup{(\alph*)},nosep]
\item The odd-cycle inequalities of $C_\ell$ cut off a fraction of $\QQ(G)$ between
$\tfrac12\bigl(1-r(\ell)\bigr)$ and $h^\QQ_B\bigl(1-r(\ell)\bigr)$.
\item Omitting them, while keeping all other odd-cycle inequalities, enlarges $\PP(G)$ by a
fraction between $\tfrac12\bigl(1-r(\ell)\bigr)$ and $h^\PP_B\bigl(1-r(\ell)\bigr)/r(\ell)$.
\end{enumerate}
Here $1-r(\ell)=4^{\ell-1}/(\ell A_{2\ell-1})\le\pi^{2\ell}/(4\,(2\ell)!)$.
\end{theorem}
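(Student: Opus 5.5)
Every quantity here reduces to a one-dimensional average over $u=u_v$, by the product formula~\eqref{eq:fiberprod}. The fiber of $\QQ(G)$ over $u_v=u$ is $\Phi^\QQ_{C,v}\Phi^\QQ_{B,v}$, writing $C=C_\ell$. The fiber of the set satisfying the odd-cycle inequalities of $C$ is $\Phi^\PP_{C,v}\Phi^\QQ_{B,v}$. The fiber of $\PP(G)$ is $\Phi^\PP_{C,v}\Phi^\PP_{B,v}$, and the fiber of the relaxation omitting $C$'s inequalities is $\Phi^\QQ_{C,v}\Phi^\PP_{B,v}$. So the fraction cut off in (a) is
\[
\frac{\int(\Phi^\QQ_{C,v}-\Phi^\PP_{C,v})\Phi^\QQ_{B,v}}{\int\Phi^\QQ_{C,v}\Phi^\QQ_{B,v}},
\]
and the enlargement in (b) is $\int(\Phi^\QQ_{C,v}-\Phi^\PP_{C,v})\Phi^\PP_{B,v}\big/\int\Phi^\PP_{C,v}\Phi^\PP_{B,v}$. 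Both are ratios of integrals of a $C$-profile against a $B$-weight $\Psi\in\{\Phi^\QQ_{B,v},\Phi^\PP_{B,v}\}$. By Lemma~\ref{lem:unimodal}, $\Psi$ is even, nonincreasing in $|u|$ and positive inside, with maximum $\Psi(0)$ and $\int\Psi=\vol$ of the corresponding polytope of $B$.

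For the upper bounds I would estimate the numerator by $\Psi(0)\int(\Phi^\QQ_{C,v}-\Phi^\PP_{C,v})=\Psi(0)\,(1-r(\ell))\vol\QQ(C)$. For the denominator I need a lower bound $\int\Phi_{C,v}\Psi\gtrsim\vol(C)\vol(B)$. Here I would use Chebyshev in the favourable direction: both $\Phi_{C,v}$ and $\Psi$ are nonincreasing in $|u|$, so they are positively correlated under the uniform measure on $[-\tfrac12,\tfrac12]$ (length $1$). This gives $\int\Phi_{C,v}\Psi\ge\int\Phi_{C,v}\int\Psi$. For (a) this yields the fraction $\le h^\QQ_B(1-r(\ell))$. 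For (b), with $\Psi=\Phi^\PP_B$ and denominator $\int\Phi^\PP_{C,v}\Psi\ge\vol\PP(C)\vol\PP(B)=r(\ell)\vol\QQ(C)\vol\PP(B)$, it yields $h^\PP_B(1-r(\ell))/r(\ell)$. The bound $h_B\le d_B$ follows from the same Brunn concavity used in Lemma~\ref{lem:unimodal}. The function $\Psi^{1/(d_B-1)}$ is concave and even on $[-\tfrac12,\tfrac12]$, so $\Psi(u)\ge\Psi(0)(1-2|u|)^{d_B-1}$. Integrating gives $\int\Psi\ge\Psi(0)/d_B$.

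For the lower bounds $\tfrac12(1-r(\ell))$ I would argue in the reverse direction. The excess $E:=\Phi^\QQ_{C,v}-\Phi^\PP_{C,v}$ is concentrated near $u=0$, where the weight $\Psi$ is largest. Precisely, I want $E/\Phi^\QQ_{C,v}$ (respectively $E/\Phi^\PP_{C,v}$) to be nonincreasing in $|u|$, generalizing the monotonicity of $\rho_{C_3,v}$ in Proposition~\ref{prop:rhotri}. Then Chebyshev applied to the probability measure proportional to $\Phi_{C,v}\Psi$ gives a fraction at least the unweighted one, $1-r(\ell)$, or $(1-r(\ell))/r(\ell)$ in (b), both of which are $\ge\tfrac12(1-r(\ell))$. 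Since the statement only claims $\tfrac12$, I expect the authors avoid full monotonicity. Instead I would bound $E(u)\ge E(0)\,\kappa(u)$ crudely, via the simplex formula $(R_A)_+^\ell/\ell!$ from the proof of Theorem~\ref{thm:cap}. For fixed $u_v$ the other vertex coordinates contribute, and $R_A$ decreases by at most $2|u|$ relative to $u=0$. This gives a comparison of $E$ with $\Phi^\QQ_{C,v}$ up to a factor $2$.

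\textbf{Main obstacle.} The main obstacle is exactly this monotonicity/comparison of the cut-off profile $E$ for a general cycle. Only $\ell=3$ is explicit, so I would first try a coupling. Translate the path $v_1,\dots,v_{\ell-1}$ coordinates so that the fiber over $u$ maps into the fiber over $0$, using the excess decomposition of Proposition~\ref{prop:rhotri}. This should show that $E(u)/E(0)$ decays at least as fast as $\Phi^\QQ_{C,v}(u)/\Phi^\QQ_{C,v}(0)$, up to a factor $2$. The final estimate $1-r(\ell)\le\pi^{2\ell}/(4(2\ell)!)$ would come from the classical bound $A_{2\ell-1}\ge 2\cdot\frac{(2\ell-1)!\,2^{2\ell}}{\pi^{2\ell}}$ (tangent-number asymptotics, with $\sum_{k\ge1}(2k-1)^{-2\ell}\ge1$). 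Substituting this into $4^{\ell-1}/(\ell A_{2\ell-1})$ gives the claim.
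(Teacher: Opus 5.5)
Your upper bounds, the estimate $h^{\PP}_B,h^{\QQ}_B\le d_B$ from the concave $(d_B-1)$st root, and the bound $1-r(\ell)\le\pi^{2\ell}/(4\,(2\ell)!)$ via $A_{2\ell-1}\ge2\,(2\ell-1)!\,4^\ell/\pi^{2\ell}$ are correct and agree with the paper. The gap is in the two lower bounds $\tfrac12(1-r(\ell))$.

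Your main route needs $\rho_{C_\ell,v}$ to be nondecreasing in $|u|$. That is established only for $\ell=3$ (Proposition~\ref{prop:rhotri}), and you give no argument for $\ell\ge4$. The fallback does not close the gap either, for three reasons.
\begin{itemize}
\item A pointwise bound $E\ge c\,\Phi^{\QQ}_{C_\ell,v}$ with a constant $c>0$ is false. Indeed $E/\Phi^{\QQ}_{C_\ell,v}=1-\rho_{C_\ell,v}$, and this tends to $0$ as $|u|\to\tfrac12$ already for the triangle.
\item Lowering $R_A$ by $2|u|$ changes $(R_A)_+^\ell$ by a factor that is not bounded below.
\item The coupling statement you aim for, that $E(u)/E(0)$ decays at least as fast as $\Phi^{\QQ}_{C_\ell,v}(u)/\Phi^{\QQ}_{C_\ell,v}(0)$, is an upper bound on $E$ away from $0$. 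That is the wrong direction for bounding $\int E\,\Psi$ from below.
\end{itemize}

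Your intuition that the excess sits where $\Psi$ is largest is right. The missing idea is that monotonicity of the excess $E=\Phi^{\QQ}_{C_\ell,v}-\Phi^{\PP}_{C_\ell,v}$ itself suffices, not of the ratio, and that it follows from Brunn's theorem.

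\emph{Monotonicity of $E$.} By the proof of Theorem~\ref{thm:cap}, $E$ is, up to a constant factor, the fiber function over $u_v$ of the disjoint union of the $2^{\ell-1}$ sets on which a single $\OC(A)$ fails. Each such set is $\QQ(C_\ell)$ intersected with an open halfspace in $z$, hence convex. Each is also invariant under negating all vertex coordinates at once with $z$ fixed, since that map preserves the constraints of Lemma~\ref{lem:edge}. So, exactly as in Lemma~\ref{lem:unimodal}, each of these fiber functions is even with a concave root, hence nonincreasing in $|u|$, and therefore so is their sum $E$.

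\emph{The numerator.} Chebyshev's inequality for two nonincreasing functions gives $\int E\,\Psi\ge\int E\int\Psi=(1-r(\ell))\vol\QQ(C_\ell)\int\Psi$.

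\emph{The denominator.} By Lemmas~\ref{lem:unimodal} and~\ref{lem:cycmarg}, $\Phi^{\PP}_{C_\ell,v}\le\Phi^{\QQ}_{C_\ell,v}\le\Phi^{\QQ}_{C_\ell,v}(0)=2\vol\QQ(C_\ell)$. Hence $\int\Phi^{\bullet}_{C_\ell,v}\Psi\le2\vol\QQ(C_\ell)\int\Psi$ for $\bullet\in\{\PP,\QQ\}$.

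Dividing yields $\tfrac12(1-r(\ell))$ in both (a) and (b). The factor $\tfrac12$ is just the ratio of the mean to the maximum of $\Phi^{\QQ}_{C_\ell,v}$, not a loss incurred in a comparison or coupling. No information about $\rho_{C_\ell,v}$ away from $u=0$ is needed.
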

\begin{proof}
Write $\Phi^\bullet_\ell=\Phi^\bullet_{C_\ell,v}$ and $\Phi^\bullet_B=\Phi^\bullet_{B,v}$ for
$\bullet\in\{\PP,\QQ\}$, let $V_\ell=\vol\QQ(C_\ell)$, and let $D=\Phi^\QQ_\ell-\Phi^\PP_\ell$\,. All
integrals are over $[-\tfrac12,\tfrac12]$, and $\int D=(1-r(\ell))V_\ell$\,.

\emph{The function $D$ is nonincreasing in $|u|$.} By Theorem~\ref{thm:padberg}, $D$ is the fiber
function over $u_v$ of the union of the $2^{\ell-1}$ sets on which an odd-cycle inequality of
$C_\ell$ fails, and these sets are disjoint (proof of Theorem~\ref{thm:cap}). By
Lemma~\ref{lem:oc}, each of them is $\QQ(C_\ell)$ intersected with an open halfspace, so it is
convex. Each is also invariant under the map that negates every vertex coordinate $u_w$ and fixes
$z$, because this map preserves the constraints of Lemma~\ref{lem:edge}. As in the proof of
Lemma~\ref{lem:unimodal}, Brunn's theorem shows that the fiber function of each set is even and
nonincreasing in $|u|$, and hence so is $D$.

(a) By the fiber product~\eqref{eq:fiberprod}, the fraction of $\QQ(G)$ cut off is
$\int D\,\Phi^\QQ_B\big/\!\int\Phi^\QQ_\ell\Phi^\QQ_B$\,. The functions $D$ and $\Phi^\QQ_B$ are
both nonincreasing in $|u|$ (Lemma~\ref{lem:unimodal}), so Chebyshev's inequality, in the form for
two functions monotone in the same direction, gives
$\int D\,\Phi^\QQ_B\ge\int D\int\Phi^\QQ_B=(1-r(\ell))V_\ell\vol\QQ(B)$; also,
$\int D\,\Phi^\QQ_B\le\Phi^\QQ_B(0)\,(1-r(\ell))V_\ell$\,. For the denominator,
$\Phi^\QQ_\ell\le\Phi^\QQ_\ell(0)=2V_\ell$ by Lemma~\ref{lem:unimodal} and Lemma~\ref{lem:cycmarg}
below, so $\int\Phi^\QQ_\ell\Phi^\QQ_B\le2V_\ell\vol\QQ(B)$, and Chebyshev's inequality gives
$\int\Phi^\QQ_\ell\Phi^\QQ_B\ge V_\ell\vol\QQ(B)$. Combining these bounds gives (a).

(b) By~\eqref{eq:fiberprod}, the fibers of $\PP(G)$ are the products of those of $\PP(C_\ell)$ and
$\PP(B)$, and omitting the inequalities of $C_\ell$ replaces $\PP(C_\ell)$ by $\QQ(C_\ell)$. So
$\PP(G)$ is enlarged by the fraction $\int D\,\Phi^\PP_B\big/\!\int\Phi^\PP_\ell\Phi^\PP_B$\,, and we
argue as in (a) with $\Phi^\PP_B$\,, which is also nonincreasing in $|u|$. Now the denominator lies
between $\int\Phi^\PP_\ell\int\Phi^\PP_B=r(\ell)V_\ell\vol\PP(B)$, by Chebyshev's inequality, and
$\Phi^\PP_\ell(0)\vol\PP(B)\le2V_\ell\vol\PP(B)$.

Finally, as in the proof of Theorem~\ref{thm:longcycle}, $\Phi^\bullet_B$ has a concave
$(d_B-1)$st root, so $\Phi^\bullet_B(u)\ge\Phi^\bullet_B(0)(1-2|u|)^{d_B-1}$, and integrating gives
$h^\bullet_B\le d_B$\,. The formula for $1-r(\ell)$ is that of Section~\ref{sec:comp}, and the bound
follows from $V_\ell-\vol\PP(C_\ell)=2^{\ell-2}/(2\ell)!$~\cite[Theorems~13 and~19]{LeeSkipper20}
and $V_\ell\ge(2/\pi^2)^\ell$ (Lemma~\ref{lem:cycmarg}).
\end{proof}

For the triangle, Corollary~\ref{cor:pendanttri} gives the sharper lower bound $1/3$ in (a). For
long cycles, the upper bounds are far below the cap of Theorem~\ref{thm:cap}: $1-r(\ell)$ decays
like $\pi^{2\ell}/(2\ell)!$, whereas $2^{\ell-1}/\ell!$ decays only like $2^\ell/\ell!$\,. For
example, let $G$ be the \emph{bouquet} of the cycles $C_3,C_4,\dots,C_L$\,, glued at one vertex, and
let $d$ be its dimension. Applying the theorem to each cycle, with $B$ the union of the others,
shows that the inequalities of $C_\ell$ cut off, and are worth, between $\tfrac12(1-r(\ell))$ and
$d\,(1-r(\ell))/r(\ell)$ of the volume. So, in one and the same graph, the contribution of a cycle
falls by a factor of order $(2\ell)^2/\pi^2$ with each additional edge.

\subsection{A long cycle as a neighbor}\label{sec:long}
The $\QQ$-marginal of a cycle has an exact spectral form.

\begin{lemma}\label{lem:cycmarg}
For $\ell\ge3$ and every vertex $v$ of $C_\ell$\,,
\[
\Phi^\QQ_{C_\ell,v}(u)=2\Bigl(\frac2{\pi^2}\Bigr)^{\!\ell}\sum_{j\ge0}
\frac{\cos^2\bigl((2j+1)\pi u\bigr)}{(2j+1)^{2\ell}}\,.
\]
Consequently,
\[
\vol\QQ(C_\ell)=\Bigl(\frac2{\pi^2}\Bigr)^{\!\ell}(1-2^{-2\ell})\,\zeta(2\ell),\qquad
\Phi^\QQ_{C_\ell,v}(0)=2\vol\QQ(C_\ell),
\]
and, for every $u$,
\[
\Bigl|\frac{\Phi^\QQ_{C_\ell,v}(u)}{\vol\QQ(C_\ell)}-2\cos^2(\pi u)\Bigr|\le4\cdot9^{-\ell}.
\]
\end{lemma}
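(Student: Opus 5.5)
The plan is to read $\Phi^\QQ_{C_\ell,v}$ as the diagonal of the $\ell$th power of an integral operator, and to diagonalize that operator explicitly. Label the cycle $v=v_0,v_1,\dots,v_{\ell-1},v_\ell=v_0$. By Lemma~\ref{lem:edge}, once all vertex coordinates are fixed the fiber of $\QQ(C_\ell)$ in the $y$-coordinates is a box. Its side for the edge $ij$ has length $\chi(u_i,u_j)=\tfrac12\bigl(1-2\max(|u_i|,|u_j|)\bigr)$, which is half the $z$-interval length. Integrating over the $\ell-1$ other vertex coordinates gives
\[
\Phi^\QQ_{C_\ell,v}(u)=\int\!\cdots\!\int\chi(u,a_1)\chi(a_1,a_2)\cdots\chi(a_{\ell-1},u)\,da_1\cdots da_{\ell-1}=K^\ell(u,u),
\]
where $K$ is the integral operator on $L^2[-\tfrac12,\tfrac12]$ with the symmetric, continuous, positive kernel $\chi$. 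This is the same transfer as in Proposition~\ref{prop:qfib}, now closed into a loop.

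Next, I would diagonalize $K$. The kernel is even in each variable, so $K$ annihilates odd functions, and only even functions matter. Set $T=1-2|u|$ and $P=1-2|a|$. For even $f(a)=g(P)$, folding $a$ onto $[0,\tfrac12]$ and substituting $dP=2\,da$ gives
\[
Kf(u)=\tfrac12\int_0^1\min(T,P)\,g(P)\,dP .
\]
The $\min$ kernel on $[0,1]$ is the Green's function of $-g''$ with $g(0)=0$ and $g'(1)=0$. Its eigenfunctions are $\sin\bigl((2j+1)\pi P/2\bigr)$, with eigenvalues $4/((2j+1)^2\pi^2)$, and they form a complete orthogonal system. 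Translating back, $\sin\bigl((2j+1)\pi(1-2|u|)/2\bigr)=\pm\cos\bigl((2j+1)\pi u\bigr)$. Hence the normalized eigenfunctions of $K$ on the even subspace are $\sqrt2\cos((2j+1)\pi u)$, with eigenvalues $\lambda_j=2/((2j+1)^2\pi^2)$. These cosines are complete for even $L^2$ functions, since they are the Dirichlet-at-$\tfrac12$, Neumann-at-$0$ system on $[0,\tfrac12]$. By Mercer's theorem, which applies because the kernel is continuous and positive semidefinite, or directly because $\sum\lambda_j^\ell<\infty$ for $\ell\ge2$, the diagonal is
\[
K^\ell(u,u)=\sum_j\lambda_j^\ell\cdot2\cos^2\bigl((2j+1)\pi u\bigr),
\]
which is the stated formula.

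The consequences then follow directly. Integrating term by term over $[-\tfrac12,\tfrac12]$, using $\int\cos^2=\tfrac12$, gives $\vol\QQ(C_\ell)=(2/\pi^2)^\ell\sum_j(2j+1)^{-2\ell}=(2/\pi^2)^\ell(1-2^{-2\ell})\zeta(2\ell)$. Setting $u=0$ makes every $\cos^2$ equal to $1$, which gives $\Phi^\QQ_{C_\ell,v}(0)=2\vol\QQ(C_\ell)$. For the approximation, write $w_j=(2j+1)^{-2\ell}$, $c_j=\cos^2((2j+1)\pi u)$, and $S=\sum_jw_j\ge1$. Then
\[
\frac{\Phi^\QQ_{C_\ell,v}(u)}{\vol\QQ(C_\ell)}-2c_0=\frac{2\sum_{j\ge1}w_j(c_j-c_0)}{S},
\]
and its absolute value is at most $2\sum_{j\ge1}w_j$. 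This tail is $9^{-\ell}\bigl(1+\sum_{j\ge2}(3/(2j+1))^{2\ell}\bigr)\le2\cdot9^{-\ell}$ for $\ell\ge3$, which yields $4\cdot9^{-\ell}$.

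The main obstacle is bookkeeping rather than ideas. The factors of $2$ have to be tracked exactly: the factor $\tfrac12$ from the $y$- versus $z$-lengths, the factor $2$ from folding $a$, and the factor $\tfrac12$ from $dP$. The paper's normalization of $\Phi^\QQ$ is in $y$-coordinates, consistent with Proposition~\ref{prop:rhotri}. As a check, I would verify $\ell=3$ at $u=0$ against $\Phi^\QQ_{C_3,v}(0)=1/60$ from that proposition: $2(2/\pi^2)^3\cdot\tfrac{63}{64}\zeta(6)=2\cdot\tfrac{8}{\pi^6}\cdot\tfrac{63}{64}\cdot\tfrac{\pi^6}{945}=\tfrac1{60}$. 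It also needs a justification that the pointwise diagonal identity holds for every $u$ and not only in $L^2$. For that I would use the absolute convergence of the series for $\ell\ge2$ together with the continuity of both sides.
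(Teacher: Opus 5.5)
Your proposal is correct and follows essentially the same route as the paper. Both identify $\Phi^\QQ_{C_\ell,v}(u)$ with the diagonal of the kernel of $\mathcal K^\ell$, diagonalize $\mathcal K$ on even functions (eigenfunctions $\sqrt2\cos((2j+1)\pi u)$, eigenvalues $2/((2j+1)\pi)^2$), expand by Mercer's theorem, and bound the tail by $\sum_{j\ge1}(2j+1)^{-2\ell}\le2\cdot9^{-\ell}$. The differences are cosmetic: you fold onto $[0,1]$ and recognize the $\min$ kernel as a Green's function, whereas the paper differentiates under the integral to get $(\mathcal Kf)''=-2f$ with $\mathcal Kf(\pm\tfrac12)=0$, and the paper bounds the $j\ge2$ tail by an integral comparison, whereas you only assert $\sum_{j\ge2}(3/(2j+1))^{2\ell}\le1$, which is true and easy to check for $\ell\ge3$.
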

\begin{proof}
For an edge $ij$, the $y_{ij}$ allowed by~\eqref{eq:Q} form an interval of length
$\min(x_i,x_j)-\max(0,x_i+x_j-1)=\chi(u_i,u_j)$, where
$\chi(u,t)=\tfrac12\bigl(1-|u-t|-|u+t|\bigr)$. Integrating out all $y_e$ and then the $u_w$ with
$w\ne v$ gives
\[
\Phi^\QQ_{C_\ell,v}(u)=\chi^{(\ell)}(u,u),\qquad
\chi^{(\ell)}(u,t)=\idotsint\chi(u,t_1)\,\chi(t_1,t_2)\cdots\chi(t_{\ell-1},t)\,dt_1\cdots dt_{\ell-1}\,,
\]
where the inner variables are the coordinates of the other $\ell-1$ vertices of the cycle, and
$\chi^{(\ell)}$ is the kernel of the $\ell$th power of the integral operator $\mathcal K$ on
$L^2[-\tfrac12,\tfrac12]$ with kernel $\chi$. Because $\chi$ is even in each argument, $\mathcal K$
annihilates odd functions. For even $f$, the function $\mathcal Kf$ vanishes at $u=\pm\tfrac12$,
because $\chi(\pm\tfrac12,t)=0$, and $(\mathcal Kf)''=-2f$: differentiating under the integral
sign, $(\mathcal Kf)'(u)=-\tfrac12\int\bigl(\operatorname{sgn}(u-t)+\operatorname{sgn}(u+t)\bigr)f(t)\,dt$,
and, because $\tfrac{d}{du}\int\operatorname{sgn}(u\mp t)f(t)\,dt=2f(\pm u)$, a second
differentiation gives $-\bigl(f(u)+f(-u)\bigr)=-2f(u)$. So the
eigenfunctions with nonzero eigenvalues are $\psi_j(u)=\sqrt2\cos((2j+1)\pi u)$, with eigenvalues
$\lambda_j=2/((2j+1)\pi)^2$, and they form an orthonormal basis of the even functions. Thus $\mathcal K$ is a positive-semidefinite integral operator with a continuous symmetric
kernel, and Mercer's theorem (see, for example,~\cite{RieszNagy90}) expands such a kernel, and
hence the kernel of every power, in the eigenfunctions, with uniform convergence:
$\chi^{(\ell)}(u,t)=\sum_j\lambda_j^\ell\,\psi_j(u)\,\psi_j(t)$, which is the formula. Integrating over
$u$ gives $\sum_j\lambda_j^\ell=(2/\pi^2)^\ell\sum_{j\ge0}(2j+1)^{-2\ell}$, and evaluating at $u=0$
gives twice that. For the last claim, put $\beta_j=(2j+1)^{-2\ell}$ and
$\gamma_j(u)=2\cos^2((2j+1)\pi u)\in[0,2]$, so that the normalized marginal is
$\sum_j\beta_j\gamma_j/\sum_j\beta_j$\,. Its distance from $\gamma_0$ is at most
$2\sum_{j\ge1}\beta_j/\sum_{j\ge0}\beta_j\le2\sum_{j\ge1}\beta_j$\,. Comparing the terms with
$j\ge2$ with an integral,
\[
\sum_{j\ge1}\beta_j\le9^{-\ell}+\int_1^\infty\frac{dx}{(2x+1)^{2\ell}}
=9^{-\ell}\Bigl(1+\frac{3}{2(2\ell-1)}\Bigr)\le2\cdot9^{-\ell}.
\]
\end{proof}

With $B_{2\ell}$ the Bernoulli number, Euler's formula gives
$\zeta(2\ell)=2^{2\ell-1}\pi^{2\ell}|B_{2\ell}|/(2\ell)!$, and the tangent numbers satisfy
$A_{2\ell-1}=2^{2\ell}(2^{2\ell}-1)|B_{2\ell}|/(2\ell)$ (see~\cite{Stanley10}); inserting these
identities shows that the volume formula is the closed form $\ell A_{2\ell-1}/(2^\ell(2\ell)!)$
of~\cite[Theorem~13]{LeeSkipper20}. The limit density
$2\cos^2(\pi u)$ is $2$ at $x_v=\tfrac12$ and $0$ at $x_v\in\{0,1\}$.

For two cycles of lengths $\ell$ and $\ell'$ sharing one vertex, define the \emph{gluing factor}
$\kappa(\ell,\ell')$ by $\mathrm{ratio}=\kappa(\ell,\ell')\,r(\ell)\,r(\ell')$.

\begin{theorem}\label{thm:longcycle}
Let $B$ be a series-parallel graph with a vertex $v$, let $d_B=|V(B)|+|E(B)|$, and let $G_\ell$ be
obtained by gluing $C_\ell$ to $B$ at $v$. Put
\[
L_B=\frac{\int\Phi^\PP_{B,v}(u)\cos^2(\pi u)\,du}{\int\Phi^\QQ_{B,v}(u)\cos^2(\pi u)\,du}\,.
\]
Then, for every $\ell\ge3$,
\[
\bigl|\mathrm{ratio}(G_\ell)-L_B\bigr|\le16\cdot9^{-\ell}+2\,d_B\,\bigl(1-r(\ell)\bigr),
\qquad\text{where}\qquad
1-r(\ell)\le\frac{\pi^{2\ell}}{4\,(2\ell)!}\,.
\]
In particular, $\mathrm{ratio}(G_\ell)\to L_B$\,, and the error is $O(9^{-\ell})$ with a constant
that depends on $B$ only through $d_B$\,, because $(2\ell)!$ grows faster than every exponential.
If $\rho_{B,v}$ is nondecreasing in $|u|$ and not constant, then $L_B<\mathrm{ratio}(B)$. For
$B=C_3$\,, with $\kappa_\infty=\pi^2(\pi^4+45)/(\pi^6+480)=0.975115\ldots$\,,
\[
\bigl|\kappa(3,\ell)-\kappa_\infty\bigr|\le36\cdot9^{-\ell}+30\,\bigl(1-r(\ell)\bigr)
\qquad\text{for every }\ell\ge3.
\]
\end{theorem}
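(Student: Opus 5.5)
Using the fiber product \eqref{eq:fiberprod} at $v$, I would write
\[
\mathrm{ratio}(G_\ell)=\frac{\int\Phi^\PP_{\ell}\,\Phi^\PP_{B}}{\int\Phi^\QQ_{\ell}\,\Phi^\QQ_{B}}\,,
\]
where $\Phi^\bullet_\ell=\Phi^\bullet_{C_\ell,v}$ and $\Phi^\bullet_B=\Phi^\bullet_{B,v}$. The plan has two replacements. First, replace $\Phi^\PP_\ell$ by $\Phi^\QQ_\ell$ in the numerator. Second, replace the normalized $\Phi^\QQ_\ell/V_\ell$, with $V_\ell=\vol\QQ(C_\ell)$, by $2\cos^2(\pi u)$ in both numerator and denominator.

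\emph{First replacement.} We have $\Phi^\PP_\ell=\Phi^\QQ_\ell-D$ with $0\le D$ and $\int D=(1-r(\ell))V_\ell$. Because $\Phi^\PP_B\le\Phi^\QQ_B$, the numerator changes by at most $\Phi^\PP_B(0)(1-r(\ell))V_\ell$. The denominator is at least $V_\ell\vol\QQ(B)$ by Chebyshev's inequality, as in the proof of Theorem~\ref{thm:hangcyc}. The error is therefore at most $h\,(1-r(\ell))$ with $h=\Phi^\PP_B(0)/\vol\QQ(B)\le\Phi^\QQ_B(0)/\vol\QQ(B)$. This is bounded by $d_B$ through the concave $(d_B-1)$st root argument: Brunn's theorem gives $\Phi_B(u)\ge\Phi_B(0)(1-2|u|)^{d_B-1}$, and integrating gives $\vol\ge\Phi_B(0)/d_B$. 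The factor $2$ in the statement leaves room for this and for the normalization below.

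\emph{Second replacement.} Put $\Psi=\Phi^\QQ_\ell/V_\ell$. By Lemma~\ref{lem:cycmarg}, $|\Psi-2\cos^2(\pi u)|\le4\cdot9^{-\ell}$ uniformly. Then
\[
\frac{\int\Psi\Phi^\PP_B}{\int\Psi\Phi^\QQ_B}
\quad\text{versus}\quad
L_B=\frac{\int2\cos^2\Phi^\PP_B}{\int2\cos^2\Phi^\QQ_B}\,.
\]
I would write $N+\epsilon_N$ over $M+\epsilon_M$ with $|\epsilon_N|\le4\cdot9^{-\ell}\vol\PP(B)$ and $|\epsilon_M|\le4\cdot9^{-\ell}\vol\QQ(B)$. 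For the lower bound on $M=\int2\cos^2(\pi u)\Phi^\QQ_B$, Chebyshev's inequality (both factors are nonincreasing in $|u|$) gives $M\ge\vol\QQ(B)\int2\cos^2=\vol\QQ(B)$. Since $N/M\le1$, the difference of the two quotients is at most $(|\epsilon_N|+|\epsilon_M|)/(M-|\epsilon_M|)$, which gives a constant such as $16\cdot9^{-\ell}$ once $\ell\ge3$ keeps $4\cdot9^{-\ell}$ small. The bound on $1-r(\ell)$ is already in Theorem~\ref{thm:hangcyc}.

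\emph{Strict inequality.} Write $L_B$ as the average of $\rho_{B,v}$ against the probability density proportional to $\cos^2(\pi u)\Phi^\QQ_B$. Similarly, $\mathrm{ratio}(B)$ is its average against the density proportional to $\Phi^\QQ_B$. Since $\cos^2$ is strictly decreasing in $|u|$ and $\rho_{B,v}$ is nondecreasing and not constant, Chebyshev's inequality with respect to $\mu\propto\Phi^\QQ_B$ gives $\int\rho\cos^2\,d\mu<\int\rho\,d\mu\int\cos^2\,d\mu$. Dividing through yields $L_B<\mathrm{ratio}(B)$. The equality case needs $\mu$ to have full support, which holds by Lemma~\ref{lem:unimodal}.

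\emph{The triangle case.} For $B=C_3$ we have $\kappa(3,\ell)=\mathrm{ratio}(G_\ell)/(r(3)r(\ell))$. Take $\Phi^\PP_{C_3}=(1-4t^2)^2/96$ and $\Phi^\QQ_{C_3}$ from Proposition~\ref{prop:rhotri}, and integrate each against $\cos^2(\pi u)$ by repeated integration by parts. The resulting polynomial-times-cosine integrals give expressions in $\pi^{-2},\pi^{-4},\pi^{-6}$, and $L_{C_3}/(2/3)$ should simplify to $\pi^2(\pi^4+45)/(\pi^6+480)$. The error bound follows from the general one with $d_{C_3}=6$, divided by $r(3)r(\ell)\ge\tfrac23\cdot r(3)$, together with $|1/r(\ell)-1|\lesssim1-r(\ell)$. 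This gives the constants $36=16\cdot\tfrac32\cdot$(slack) and $30$.

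\emph{Main obstacle.} I expect the hardest part to be getting the stated constants exactly. This likely requires a sharper estimate of $h$ than $d_B$, or a careful use of $\ell\ge3$ in $1/r(\ell)$. If the constants do not come out, I would first redo the first replacement, dividing by the true denominator rather than by its Chebyshev lower bound.
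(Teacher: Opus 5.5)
Your proposal is correct and uses the same ingredients as the paper (the fiber product, unimodality with Chebyshev's inequality for the denominators, the uniform $4\cdot9^{-\ell}$ estimate of Lemma~\ref{lem:cycmarg}, and the concave-root bound $\Phi^\PP_B(0)\le d_B\vol\QQ(B)$). The paper merely merges your two replacements into one quotient and bounds its denominator below by $\tfrac12\vol\QQ(B)$, which is where the factor $2$ in $2d_B$ comes from. Your bookkeeping therefore gives the sharper bound $d_B(1-r(\ell))+8\cdot9^{-\ell}/(1-4\cdot9^{-\ell})$, and your worry about the constants is unfounded: for the triangle, $1/(r(3)r(\ell))\le\tfrac94$ turns the general bound with $d_{C_3}=6$ into $36\cdot9^{-\ell}+\tfrac94(12+1)(1-r(\ell))$, and $\tfrac{117}{4}\le30$.

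One small correction: your step $\int D\,\Phi^\PP_B\le\Phi^\PP_B(0)\int D$ rests on $\Phi^\PP_B\le\Phi^\PP_B(0)$ from Lemma~\ref{lem:unimodal}, not on $\Phi^\PP_B\le\Phi^\QQ_B$.
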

\begin{proof}
Write $\Phi^\bullet_B=\Phi^\bullet_{B,v}$ and $\Phi^\bullet_\ell=\Phi^\bullet_{C_\ell,v}$ for
$\bullet\in\{\PP,\QQ\}$, let $V_\ell=\vol\QQ(C_\ell)$, let $\gamma_0(u)=2\cos^2(\pi u)$ as in the proof of
Lemma~\ref{lem:cycmarg}, and let $\alpha^\bullet=\int\Phi^\bullet_B\gamma_0$\,, so that $L_B=\alpha^\PP/\alpha^\QQ$\,; because
$\Phi^\PP_B\le\Phi^\QQ_B$\,, we have $L_B\le1$. All integrals are over $[-\tfrac12,\tfrac12]$, and
$\int\Phi^\PP_B=\vol\PP(B)$ and $\int\Phi^\QQ_B=\vol\QQ(B)$.

By~\eqref{eq:fiberprod}, $\mathrm{ratio}(G_\ell)=N/D$ with
\[
N=\frac1{V_\ell}\int\Phi^\PP_B\Phi^\PP_\ell\,,\qquad D=\frac1{V_\ell}\int\Phi^\QQ_B\Phi^\QQ_\ell\,.
\]
\emph{The denominator.} By Lemma~\ref{lem:cycmarg}, $D=\alpha^\QQ+\delta^\QQ$ with
$|\delta^\QQ|\le4\cdot9^{-\ell}\vol\QQ(B)$. Both $\Phi^\QQ_B$ and $\gamma_0$ are nonincreasing in $|u|$
(Lemma~\ref{lem:unimodal}), so Chebyshev's inequality, in the form for two functions monotone in
the same direction, gives $\alpha^\QQ\ge\int\Phi^\QQ_B\int\gamma_0=\vol\QQ(B)$. Hence
$D\ge(1-4\cdot9^{-\ell})\vol\QQ(B)\ge\tfrac12\vol\QQ(B)$.

\emph{The numerator.} We write $N=\alpha^\PP+\delta^\PP-\eta$, where
\[
\delta^\PP=\int\Phi^\PP_B\Bigl(\frac{\Phi^\QQ_\ell}{V_\ell}-\gamma_0\Bigr),\qquad
\eta=\frac1{V_\ell}\int\Phi^\PP_B\bigl(\Phi^\QQ_\ell-\Phi^\PP_\ell\bigr).
\]
By Lemma~\ref{lem:cycmarg}, $|\delta^\PP|\le4\cdot9^{-\ell}\vol\PP(B)\le4\cdot9^{-\ell}\vol\QQ(B)$.
Because $0\le\Phi^\PP_\ell\le\Phi^\QQ_\ell$ and $\Phi^\PP_B\le\Phi^\PP_B(0)$ by
Lemma~\ref{lem:unimodal}, the integrand of $\eta$ is at most $\Phi^\PP_B(0)\,(\Phi^\QQ_\ell-\Phi^\PP_\ell)$;
integrating this bound, with $\int(\Phi^\QQ_\ell-\Phi^\PP_\ell)=V_\ell-\vol\PP(C_\ell)=(1-r(\ell))V_\ell$\,,
gives $0\le\eta\le\Phi^\PP_B(0)\,(1-r(\ell))$.

\emph{The bound.} Now $\mathrm{ratio}(G_\ell)-L_B=(N-L_BD)/D=(\delta^\PP-\eta-L_B\delta^\QQ)/D$, so
\[
\bigl|\mathrm{ratio}(G_\ell)-L_B\bigr|
\le\frac{8\cdot9^{-\ell}\vol\QQ(B)+\Phi^\PP_B(0)\,(1-r(\ell))}{\tfrac12\vol\QQ(B)}\,.
\]
It remains to show that $\Phi^\PP_B(0)\le d_B\vol\QQ(B)$. We have
$\Phi^\PP_B(0)\le\Phi^\QQ_B(0)$, and, as in the proof of Lemma~\ref{lem:unimodal},
$f=\Phi^\QQ_B$ has a concave $(d_B-1)$st root on $[-\tfrac12,\tfrac12]$. Hence
$f(u)\ge f(0)(1-2|u|)^{d_B-1}$, and integrating gives $\vol\QQ(B)\ge f(0)/d_B$\,. For the bound
on $1-r(\ell)$, we use $V_\ell-\vol\PP(C_\ell)=2^{\ell-2}/(2\ell)!$ from~\cite[Theorems~13
and~19]{LeeSkipper20} and $V_\ell\ge(2/\pi^2)^\ell$ from Lemma~\ref{lem:cycmarg}.

\emph{The comparison with $\mathrm{ratio}(B)$.} The limit $L_B$ is the average of $\rho_{B,v}$
against the density proportional to $\Phi^\QQ_B\cos^2(\pi u)$, and $\mathrm{ratio}(B)$ is its
average against the density proportional to $\Phi^\QQ_B$\,. The factor $\cos^2(\pi u)$ is
strictly decreasing in $|u|$, so Chebyshev's inequality gives $L_B<\mathrm{ratio}(B)$.

\emph{The triangle.} For $B=C_3$\,, inserting the marginals of Proposition~\ref{prop:rhotri} and
integrating gives $L_{C_3}=r(3)\,\kappa_\infty$\,. Because
$\kappa(3,\ell)=\mathrm{ratio}(G_\ell)/(r(3)\,r(\ell))$,
\[
\bigl|\kappa(3,\ell)-\kappa_\infty\bigr|
\le\frac1{r(3)}\Bigl(\frac{|\mathrm{ratio}(G_\ell)-L_{C_3}|}{r(\ell)}
+L_{C_3}\,\frac{1-r(\ell)}{r(\ell)}\Bigr)
\le\frac94\Bigl(\bigl|\mathrm{ratio}(G_\ell)-L_{C_3}\bigr|+1-r(\ell)\Bigr),
\]
because $r(3)=2/3$, $r(\ell)\ge2/3$ for $\ell\ge3$ (by the bound on $1-r(\ell)$ for $\ell\ge4$),
and $L_{C_3}\le1$. With $d_{C_3}=6$, this is at most
$36\cdot9^{-\ell}+\tfrac{117}4(1-r(\ell))$.
\end{proof}

A long cycle loses almost nothing to its own odd-cycle inequalities, because $r(\ell)\to1$, yet it
is not a harmless neighbor: it acts on $B$ as the fixed vertex weight $\cos^2(\pi u_v)$, which is
concentrated where $\rho_{B,v}$ is smallest.

\subsection{Large flowers}\label{sec:flowers}
When many copies of a graph $B$ meet at one vertex $v$, the fibers concentrate at $x_v=\tfrac12$\,,
and the value $\rho_{B,v}(0)$ governs the ratio. The proof is an instance of the Laplace method:
for large $k$, the integral of the $k$th power of a function is governed by a neighborhood of the
maximum of the function.

\begin{theorem}\label{thm:bigflower}
Let $B$ be a series-parallel graph with a cycle and a vertex $v$, let $d_B=|V(B)|+|E(B)|$, and
let $F_k$ consist of $k$ copies of $B$ glued at $v$ and otherwise disjoint. Then, for every
$k\ge1$,
\[
\frac{\rho_{B,v}(0)^k}{4k(d_B-1)}\le\mathrm{ratio}(F_k)\le4k(d_B-1)\,\rho_{B,v}(0)^k.
\]
In particular, $\mathrm{ratio}(F_k)^{1/k}\to\rho_{B,v}(0)<1$, with relative error
$O(\log(k\,d_B)/k)$.
\end{theorem}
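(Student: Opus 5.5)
By the fiber product \eqref{eq:fiberprod}, applied $k-1$ times (the copies share only $v$ and no edge), the fibers of $F_k$ over $u_v=u$ are the $k$th powers of those of $B$. Writing $f=\Phi^\QQ_{B,v}$ and $g=\Phi^\PP_{B,v}=\rho f$ with $\rho=\rho_{B,v}$, we get
\[
\mathrm{ratio}(F_k)=\frac{\int g^k}{\int f^k}\,,
\]
with integrals over $[-\tfrac12,\tfrac12]$. The plan is a two-sided Laplace estimate for both integrals, driven by the concavity already used in the proofs of Lemma~\ref{lem:unimodal} and Theorem~\ref{thm:longcycle}. By Brunn's theorem, $f$ and $g$ each have a concave $(d_B-1)$st root on $[-\tfrac12,\tfrac12]$. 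Both are even and maximal at $0$, and both vanish at $\pm\tfrac12$ when $v$ is not isolated, which holds since $B$ has a cycle through, or at least a vertex connected to, $v$. If $v$ is isolated the statement needs separate care; I would reduce to the block containing $v$, or note that $v$ lies on a nontrivial component.

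For the upper bound, we use $g^k\le g(0)^k=\rho(0)^kf(0)^k$ on the numerator, and bound the denominator from below. Concavity of $f^{1/(d_B-1)}$ gives $f(u)\ge f(0)(1-2|u|)^{d_B-1}$. Hence
\[
\int f^k\ge f(0)^k\int(1-2|u|)^{k(d_B-1)}du=\frac{f(0)^k}{k(d_B-1)+1}\,.
\]
This yields
\[
\mathrm{ratio}(F_k)\le\bigl(k(d_B-1)+1\bigr)\rho(0)^k\le 2k(d_B-1)\rho(0)^k,
\]
since $d_B\ge2$. For the lower bound we reverse the roles. The same concavity gives $\int g^k\ge g(0)^k/(k(d_B-1)+1)$, and trivially $\int f^k\le f(0)^k$. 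So
\[
\mathrm{ratio}(F_k)\ge\frac{\rho(0)^k}{k(d_B-1)+1}\,.
\]
Both bounds sit within the stated factor $4k(d_B-1)$. The slack explains the constant $4$, so I would not fight for sharper constants.

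The remaining claims then follow routinely. First, $\rho(0)<1$ is the last assertion of Lemma~\ref{lem:unimodal}, since $B$ has a cycle. Taking $k$th roots gives $\mathrm{ratio}(F_k)^{1/k}=\rho(0)\,(4k(d_B-1))^{\pm\theta/k}$ with $|\theta|\le1$. The relative error is therefore $\exp(O(\log(kd_B)/k))-1=O(\log(kd_B)/k)$.

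The main obstacle is justifying the concavity input precisely. Brunn's theorem applies to sections of the convex bodies $\PP(B)$ and $\QQ(B)$, of dimension $d_B$, by the hyperplanes $u_v=\mathrm{const}$. The $(d_B-1)$-dimensional section volume then has a concave $(d_B-1)$st root on the support $[-\tfrac12,\tfrac12]$. The step $f(u)\ge f(0)(1-2|u|)^{d_B-1}$ uses only concavity, nonnegativity at the endpoints, and the value at $0$. I would check that the support of the marginal is the full interval, which holds because every $x_v\in[0,1]$ occurs in $\PP(B)$. Everything else is one-variable integration.
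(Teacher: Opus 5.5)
Your proof is correct and follows the paper's route: the fiber product reduces the ratio to $\int g^k/\int f^k$, Brunn's theorem gives concave $(d_B-1)$st roots, and each integral is compared with the $k$th power of its value at the maximum $u=0$. The one difference is that you integrate the bound $(1-2|u|)^{k(d_B-1)}$ over the whole interval, which gives the slightly better factor $k(d_B-1)+1$; the paper instead truncates to $[-\delta,\delta]$ and uses $(1-1/n)^n\ge1/4$, and your worry about an isolated $v$ is unnecessary, since that lower bound needs only $f,g\ge0$ at $\pm\tfrac12$ (and the paper's series-parallel graphs are connected anyway).
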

\begin{proof}
By~\eqref{eq:fiberprod},
$\mathrm{ratio}(F_k)=\int(\Phi^\PP_{B,v})^k\big/\!\int(\Phi^\QQ_{B,v})^k$\,. By
Lemma~\ref{lem:unimodal}, $\Phi^\PP_{B,v}$ and $\Phi^\QQ_{B,v}$ attain their maxima $M^\PP$ and
$M^\QQ$ at $u=0$, and $M^\PP/M^\QQ=\rho_{B,v}(0)<1$. As in the proof of Theorem~\ref{thm:longcycle},
both functions have a concave $(d_B-1)$st root, so on $[-\delta,\delta]$ they are at least
$(1-2\delta)^{d_B-1}$ times their maxima. Integrating the $k$th powers over $[-\delta,\delta]$ for
the lower bounds and over $[-\tfrac12,\tfrac12]$ for the upper bounds gives
\[
2\delta\,(1-2\delta)^{k(d_B-1)}\rho_{B,v}(0)^k\le\mathrm{ratio}(F_k)
\le\frac{\rho_{B,v}(0)^k}{2\delta\,(1-2\delta)^{k(d_B-1)}}\,.
\]
We choose $2\delta=1/(k(d_B-1))$ and use $(1-1/n)^n\ge1/4$ for $n\ge2$; here
$n=k(d_B-1)\ge5$, because a graph with a cycle has $d_B\ge6$.
\end{proof}

For cycles, $\rho_{C_\ell,v}(0)$ has a closed form.

\begin{proposition}\label{prop:rhocenter}
For $\ell\ge3$ and every vertex $v$ of $C_\ell$\,,
\[
\Phi^\QQ_{C_\ell,v}(0)-\Phi^\PP_{C_\ell,v}(0)=\frac{1}{2^\ell\,(2\ell-1)!}\binom{2\ell-2}{\ell-1}
\qquad\text{and}\qquad
\rho_{C_\ell,v}(0)=1-\frac{1}{A_{2\ell-1}}\binom{2\ell-2}{\ell-1}.
\]
\end{proposition}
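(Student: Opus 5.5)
The plan is to compute the difference $\Phi^\QQ_{C_\ell,v}(0)-\Phi^\PP_{C_\ell,v}(0)$ directly from the simplex description in the proof of Theorem~\ref{thm:cap}, and then divide by $\Phi^\QQ_{C_\ell,v}(0)=2\vol\QQ(C_\ell)$ from Lemma~\ref{lem:cycmarg}.

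\emph{Reduction to one integral.} By Theorem~\ref{thm:padberg}, the difference is the fiber over $u_v=0$ of the union of the $2^{\ell-1}$ disjoint sets on which some $\OC(A)$ fails. Take $v_0=v$. For fixed vertex coordinates, the proof of Theorem~\ref{thm:cap} shows that each such set meets the $z$-box in a simplex of volume $(R_A)_+^\ell/\ell!$, where
\[
R_A=1-\sum_{j=1}^{\ell}|\tilde u_j-\tilde u_{j-1}|,
\]
the upper ends of the intervals never binding. Here $\tilde u_0=u_v=0$ and $\tilde u_\ell=-u_v=0$, so the path is closed at $0$. The map $(u_{v_1},\dots,u_{v_{\ell-1}})\mapsto(\tilde u_1,\dots,\tilde u_{\ell-1})$ only changes signs. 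Moreover $R_A>0$ already forces $|\tilde u_j|<\tfrac12$, so we may integrate over all of $\R^{\ell-1}$. Hence all $2^{\ell-1}$ sets contribute the same amount. Converting $z$ to $y$ multiplies by $2^{-\ell}$, by~\eqref{eq:jac} restricted to the $\ell$ edges. Therefore the difference equals $\tfrac12J$, where
\[
J=\frac1{\ell!}\int_{\R^{\ell-1}}\Bigl(1-\sum_{j=1}^{\ell}|\delta_j|\Bigr)_+^{\ell}\,d\tilde u .
\]
In this integral the increments are $\delta_j=\tilde u_j-\tilde u_{j-1}$ and satisfy $\sum_j\delta_j=0$. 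The change of variables $(\tilde u_1,\dots,\tilde u_{\ell-1})\mapsto(\delta_1,\dots,\delta_{\ell-1})$ has unit Jacobian.

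\emph{Evaluating $J$ by a layer-cake argument.} Let $c$ be the $(\ell-1)$-volume, in the coordinates $\delta_1,\dots,\delta_{\ell-1}$, of $\{\|\delta\|_1\le1,\ \sum\delta_j=0\}$. By homogeneity, the set $\{\|\delta\|_1\le s\}$ has volume $cs^{\ell-1}$. Then
\[
\ell!\,J=\int_0^1(1-s)^\ell\,(\ell-1)c\,s^{\ell-2}\,ds=c\,\frac{(\ell-1)!\,\ell!}{(2\ell-1)!}.
\]
To find $c$, split by the set $P$ of indices with $\delta_j>0$, of size $p\in\{1,\dots,\ell-1\}$. On the constraint hyperplane, the positive parts and the negative parts have equal sums $t\le\tfrac12$. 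Slicing in $t$, and noting that eliminating $\delta_\ell$ has unit Jacobian, each $P$ contributes
\[
\int_0^{1/2}\frac{t^{p-1}}{(p-1)!}\,\frac{t^{\ell-p-1}}{(\ell-p-1)!}\,dt .
\]
Summing over $P$ with the multiplicity $\binom{\ell}{p}$ and applying Vandermonde's identity $\sum_p\binom{\ell}{p}\binom{\ell-2}{p-1}=\binom{2\ell-2}{\ell-1}$ gives
\[
c=\frac{2^{1-\ell}}{(\ell-1)!}\binom{2\ell-2}{\ell-1}.
\]
Substituting back, $J=2^{1-\ell}\binom{2\ell-2}{\ell-1}/(2\ell-1)!$, and $\tfrac12J$ is the first claimed formula.

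\emph{The ratio.} By Lemma~\ref{lem:cycmarg} and the closed form $\vol\QQ(C_\ell)=\ell A_{2\ell-1}/(2^\ell(2\ell)!)$ recalled after it,
\[
\Phi^\QQ_{C_\ell,v}(0)=2\vol\QQ(C_\ell)=\frac{A_{2\ell-1}}{2^\ell(2\ell-1)!}.
\]
Dividing the difference by this value gives $\rho_{C_\ell,v}(0)=1-\binom{2\ell-2}{\ell-1}/A_{2\ell-1}$.

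The main obstacle is computing $c$: one must keep the Jacobian of the hyperplane section consistent (unit in the $\delta_1,\dots,\delta_{\ell-1}$ chart) and handle the sign patterns correctly, since the all-positive and all-negative patterns are impossible on $\sum\delta_j=0$. As a sanity check, I would compare the result with the triangle value $1/160$ at $t=0$ from Proposition~\ref{prop:rhotri}: for $\ell=3$ the formula gives $\binom42/(8\cdot5!)=6/960=1/160$, as required.
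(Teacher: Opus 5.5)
Your proposal is correct and follows essentially the same route as the paper: the simplex description from the proof of Theorem~\ref{thm:cap}, the increments $h_j=\tilde u_j-\tilde u_{j-1}$ on the hyperplane $\sum_j h_j=0$, the layer-cake integral against $b_\ell t^{\ell-1}$, the computation of $b_\ell$ by splitting on the positive coordinates with Vandermonde's identity, and division by $\Phi^\QQ_{C_\ell,v}(0)=2\vol\QQ(C_\ell)$. The only difference is minor. You show that the $2^{\ell-1}$ sets contribute equally by noting that the integrand is the same function of $\tilde u$ for every $A$, as the paper does for the triangle in Proposition~\ref{prop:rhotri}, whereas the paper's proof here appeals to transitivity of switching; the two arguments are equivalent.
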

\begin{proof}
By Theorem~\ref{thm:padberg}, $\QQ(C_\ell)\setminus\PP(C_\ell)$ is the union of the
$2^{\ell-1}$ sets on which $\OC(A)$ fails, one for each odd $A\subseteq E(C_\ell)$, and they are
disjoint (proof of Theorem~\ref{thm:cap}). Switching at vertices other than $v$, and at $v$, which fixes
$u_v=0$, permutes these sets (Lemma~\ref{lem:switch}), and the switchings act transitively on
the odd sets $A$. So each contributes the same volume to the fiber over $u_v=0$.

Fix $A$, and use the notation of the proof of Theorem~\ref{thm:cap}, with $v_0=v$; in the fiber,
$\tilde u_0=\tilde u_\ell=0$. As shown there, the set on which $\OC(A)$ fails is described by
$\sum_jw_j<1$ and the lower bounds $w_j\ge|\tilde u_j-\tilde u_{j-1}|$. With $h_j=\tilde u_j-\tilde u_{j-1}$\,, so that $\sum_jh_j=0$, the fiber volume of the set in the
coordinates $(\tilde u,w)$ is therefore
\[
\int_{\sum_jh_j=0}\ \frac{(1-\|h\|_1)_+^\ell}{\ell!}\,d(h_1,\dots,h_{\ell-1}).
\]
The region $\{\sum_jh_j=0,\ \|h\|_1\le t\}$ has volume $b_\ell t^{\ell-1}$. To compute $b_\ell$\,,
note that measuring a region of the hyperplane $\{\sum_jh_j=0\}$ in the coordinates
$(h_1,\dots,h_{\ell-1})$ gives the same volume as measuring it in any other $\ell-1$ of the
coordinates, because the change between the two parametrizations is linear with determinant
$\pm1$. We split the region by the set of positive coordinates, of size $p$, and parametrize by
the sum $s$ of the positive coordinates together with all but one positive and all but one
negative coordinate; the positive and the negative coordinates then range over simplices, and
\[
b_\ell=\sum_{p=1}^{\ell-1}\binom\ell p\int_0^{1/2}\frac{s^{p-1}}{(p-1)!}\,
\frac{s^{\ell-p-1}}{(\ell-p-1)!}\,ds
=\frac{1}{2^{\ell-1}(\ell-1)!}\binom{2\ell-2}{\ell-1},
\]
by Vandermonde's identity. Hence the integral is
\[
\frac{b_\ell}{\ell!}\int_0^1(1-t)^\ell\,d\bigl(t^{\ell-1}\bigr)
=\frac{b_\ell\,(\ell-1)!}{(2\ell-1)!}
=\frac{1}{2^{\ell-1}(2\ell-1)!}\binom{2\ell-2}{\ell-1}.
\]
Multiplying by the $2^{\ell-1}$ sets $A$ and by $2^{-\ell}$, which converts the $z_e$ into the
$y_e$\,, gives the first formula. By Lemma~\ref{lem:cycmarg},
$\Phi^\QQ_{C_\ell,v}(0)=2\vol\QQ(C_\ell)=A_{2\ell-1}/(2^\ell(2\ell-1)!)$, which gives the second.
\end{proof}

In a large flower, each cycle lowers the ratio strictly more than on its own.

\begin{corollary}\label{cor:bigflower}
For every $\ell\ge3$, $\rho_{C_\ell,v}(0)<r(\ell)$. Hence the flower $F_k$ of $k$ copies of
$C_\ell$ satisfies $\mathrm{ratio}(F_k)<r(\ell)^k$ for all large $k$, and
\[
\frac{1-\rho_{C_\ell,v}(0)}{1-r(\ell)}=\frac{\ell}{4^{\ell-1}}\binom{2\ell-2}{\ell-1}
\sim\sqrt{\ell/\pi}\,.
\]
\end{corollary}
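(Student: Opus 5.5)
The plan is to compute the ratio $(1-\rho_{C_\ell,v}(0))/(1-r(\ell))$ directly from the closed forms, and then read off the two remaining claims from it. By Proposition~\ref{prop:rhocenter}, $1-\rho_{C_\ell,v}(0)=\binom{2\ell-2}{\ell-1}/A_{2\ell-1}$. From the formula of Lee and Skipper recalled at the start of Section~\ref{sec:comp}, $1-r(\ell)=4^{\ell-1}/(\ell A_{2\ell-1})$. Dividing, the alternating-permutation counts $A_{2\ell-1}$ cancel, and the quotient is $\frac{\ell}{4^{\ell-1}}\binom{2\ell-2}{\ell-1}$, which is the displayed identity.

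For the asymptotics I would use the standard estimate $\binom{2n}{n}\sim4^n/\sqrt{\pi n}$ with $n=\ell-1$. This gives a quotient asymptotic to $\ell/\sqrt{\pi(\ell-1)}\sim\sqrt{\ell/\pi}$.

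For the strict inequality $\rho_{C_\ell,v}(0)<r(\ell)$, it suffices to show that the quotient exceeds $1$ for every $\ell\ge3$. Writing $c_n=\binom{2n}{n}/4^n$, the quotient is $(n+1)c_n$ with $n=\ell-1$. The ratio of consecutive values is
\[
\frac{(n+2)c_{n+1}}{(n+1)c_n}=\frac{(n+2)(2n+1)}{2(n+1)^2}=\frac{2n^2+5n+2}{2n^2+4n+2}>1,
\]
so $(n+1)c_n$ is increasing in $n$. At $\ell=3$, that is $n=2$, the quotient is $3\cdot6/16=9/8>1$, and monotonicity then covers all $\ell\ge3$. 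Alternatively, the elementary bound $\binom{2n}{n}\ge4^n/(2\sqrt n)$ gives $(n+1)c_n\ge(n+1)/(2\sqrt n)>1$ for $n\ge2$. Since $1-r(\ell)>0$, a quotient greater than $1$ means $1-\rho_{C_\ell,v}(0)>1-r(\ell)$, that is, $\rho_{C_\ell,v}(0)<r(\ell)$.

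For the flower statement I would apply Theorem~\ref{thm:bigflower} with $B=C_\ell$, so $d_B=2\ell$, which gives $\mathrm{ratio}(F_k)\le4k(2\ell-1)\rho_{C_\ell,v}(0)^k$. Set $q=\rho_{C_\ell,v}(0)/r(\ell)<1$. Then $\mathrm{ratio}(F_k)/r(\ell)^k\le4k(2\ell-1)q^k$, and the right-hand side tends to $0$ as $k\to\infty$, so it is below $1$ for all large $k$. There is no real obstacle; the only care needed is the monotonicity check at the smallest case $\ell=3$.
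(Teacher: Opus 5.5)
Your proposal is correct and follows the paper's proof essentially step for step. You cancel $A_{2\ell-1}$ to get the quotient, prove it exceeds $1$ through the same consecutive ratio $(2m+1)(m+2)/(2(m+1)^2)>1$, use the central binomial asymptotic, and derive the flower claim from Theorem~\ref{thm:bigflower}; you only spell out that last step explicitly, via $4k(2\ell-1)\bigl(\rho_{C_\ell,v}(0)/r(\ell)\bigr)^k\to0$.
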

\begin{proof}
The quotient follows from Proposition~\ref{prop:rhocenter} and the formula for $r(\ell)$. With
$c_m=\binom{2m}{m}(m+1)/4^m$, we have $c_1=1$ and $c_{m+1}/c_m=(2m+1)(m+2)/(2(m+1)^2)>1$ for
$m\ge1$, so the quotient, which equals $c_{\ell-1}$\,, exceeds $1$ for $\ell\ge3$. For the asymptotic
form, Stirling's formula gives $\binom{2m}m=\bigl(4^m/\sqrt{\pi m}\bigr)\bigl(1+O(1/m)\bigr)$, so
$c_m=\bigl((m+1)/\sqrt{\pi m}\bigr)\bigl(1+O(1/m)\bigr)\sim\sqrt{m/\pi}$. The claim about $F_k$
follows from Theorem~\ref{thm:bigflower}.
\end{proof}

For the triangle, the ratio of a flower of $k$ triangles decays like $(5/8)^k$, against
$(2/3)^k$ for $k$ disjoint triangles.

\subsection{What short odd cycles buy}\label{sec:short}
In practice, one separates odd-cycle inequalities only for short cycles, often only for
triangles; see~\cite[Section~4]{LSS18}. For $L\ge3$, let $\PP_{\le L}(G)$ be $\QQ(G)$ intersected with
the odd-cycle inequalities of the cycles of $G$ of length at most $L$; thus $\PP_{\le3}(G)$ adds
only the triangle inequalities. We ask how much of the gap between $\QQ(G)$ and $\PP(G)$ these
inequalities can close.

\begin{proposition}\label{prop:chordless}
Let $G$ be series-parallel and $L\ge3$. Then $\vol\PP_{\le L}(G)=\vol\PP(G)$ if and only if every
chordless cycle of $G$ has length at most $L$.
\end{proposition}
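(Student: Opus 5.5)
Both directions go through cut coordinates and Lemma~\ref{lem:oc}, with Theorem~\ref{thm:padberg} supplying $\PP(G)$ as $\QQ(G)$ cut by all odd-cycle inequalities.

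\emph{Sufficiency.} Suppose every chordless cycle has length at most $L$. We show that the odd-cycle inequalities of a cycle with a chord follow from those of the two shorter cycles through the chord; by induction on the cycle length, every odd-cycle inequality then follows from those of chordless cycles, hence of cycles of length at most $L$. This gives $\PP_{\le L}(G)=\PP(G)$ as sets, not just equal volumes.

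For the induction step, let $C$ have a chord $f$, which splits $C$ into paths $\pi_1,\pi_2$ with cycles $C_i=\pi_i+f$. Take an odd labelling $A$ of $C$, and let $A\cap\pi_1$ and $A\cap\pi_2$ have parities $p_1,p_2$ with $p_1+p_2$ odd. Label $f$ with parity $p_1$ in $C_1$ and with parity $p_2$ in $C_2$; then both $C_1$ and $C_2$ carry odd labellings. Adding the two inequalities $\ge1$, the chord contributes $z_f+(1-z_f)=1$. Hence $\mathrm{len}_A(C)\ge2-1=1$, which is $\OC(A)$.

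\emph{Necessity.} Let $C$ be a chordless cycle of length $\ell>L$. We must exhibit a set of positive volume in $\PP_{\le L}(G)\setminus\PP(G)$. Imitate the open set in the proof of Lemma~\ref{lem:unimodal}: take all $u_w$ near $0$, fix an edge $f\in C$, and set $z_f$ near $1$, $z_e$ near $0$ for $e\in E(C)\setminus f$, and $z_e$ near $\tfrac12$ for every edge off $C$. By Lemma~\ref{lem:edge} these points lie in $\QQ(G)$, and they violate $\OC(\{f\})$ for $C$, so they lie outside $\PP(G)$.

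It remains to check that they satisfy every odd-cycle inequality of every cycle $C'\ne C$ of length at most $L$; this is the main obstacle. Since $|C'|\le L<\ell$, we have $C'\ne C$, so $C'$ uses some edge outside $C$; otherwise $E(C')\subset E(C)$, forcing $C'=C$. If $C'$ uses at least two edges off $C$, each contributes length $\approx\tfrac12$ under either label, so every labelling has total length $\gtrsim1$. But the boundary case needs strictness, and the single-edge case needs another argument.

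So it is better to choose the point more carefully: put $z_e=\tfrac12$ exactly off $C$, and take $z$ on $C$ near $(1,0,\dots,0)$ but with the $C$-edges satisfying the relevant bounds strictly. If $C'$ contains exactly one edge $g$ off $C$, then $C'-g$ is a path in $C$ between the ends of $g$. Since $C$ is chordless, $g$ is not a chord, so an end of $g$ lies off $C$, and a cycle through $g$ must contain at least two edges off $C$. Thus every such $C'$ has at least two off-$C$ edges.

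To get slack, set $z_e=\tfrac12$ off $C$, so such a $C'$ has length $\ge1$ with equality only on a null set. Then perturb to an open set where the off-$C$ values lie in $(\tfrac12-\epsilon,\tfrac12+\epsilon)$ while the $C$-edges supply length $\ge\epsilon'$ on any nonempty path. Concretely, keep the values on $C$ within $\epsilon$ of the target, so any $C'$-part inside $C$ contributes at least $\min(z_e,1-z_e)$, which is near $0$ but not necessarily enough. I would instead take the off-$C$ values in $(\tfrac12,\tfrac12+\epsilon)$ for even label and rely on the fact that $\min(z,1-z)\ge\tfrac12-\epsilon$ for both labels; the sum over $\ge2$ such edges is $\ge1-2\epsilon$. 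That by itself is insufficient, so choose these values in $(\tfrac12-\epsilon,\tfrac12+\epsilon)$ together with the extra requirement that $C'$ includes at least one further edge, on $C$ or off $C$, of length $\ge3\epsilon$.

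This last bookkeeping, handling cycles with exactly two off-$C$ edges and the rest on $C$, is where I expect the difficulty. A cleaner fallback is a volume-positivity argument: the violated region of $\OC(\{f\})$ intersected with $\PP_{\le L}(G)$ contains a point interior to all short-cycle constraints (take exact values $\tfrac12$ off $C$ and $z_f=1-\delta$, $z_e=\delta/\ell$ on $C$, with $u=0$), and then every short-cycle labelling has length $\ge1$ with strict inequality after a small generic shift. Openness then yields positive volume, so $\vol\PP_{\le L}(G)>\vol\PP(G)$.
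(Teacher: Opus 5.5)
Your approach is the paper's. For sufficiency you use the chord-splitting induction. For necessity you take a point with $u=0$, $z=\tfrac12$ off a long chordless cycle $C$, $z_f$ near $1$ and the other edges of $C$ near $0$, and you use the fact that every cycle $C'\ne C$ has at least two edges off $C$. In the induction step, however, the chord labels are swapped. If $A\cap\pi_1$ has parity $p_1$, then $\pi_1+f$ carries an odd labelling only if $f$ gets parity $1-p_1=p_2$ in $C_1$, and likewise parity $p_1$ in $C_2$. As you wrote it, both $C_1$ and $C_2$ receive even labellings. Because $p_1\ne p_2$, the chord still gets opposite labels in the two cycles. After the swap, your sum $z_f+(1-z_f)=1$ argument is exactly the paper's, and sufficiency is fine.

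The genuine gap is in the necessity direction, which you do not finish. Openness requires that the chosen point satisfy every odd-cycle inequality of every cycle of length at most $L$ \emph{strictly}, and you never establish this. You leave the cycles with exactly two edges off $C$ as an unresolved difficulty. The fallback's ``strict inequality after a small generic shift'' is not a valid step: a small perturbation makes a tight inequality strict only on one side of a hyperplane, and you give no reason why all tight inequalities could be made strict at once.

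No shift is needed, because strictness already holds at your fallback point. Take $u=0$, $z_e=\tfrac12$ off $C$, $z_f=1-\delta$, and $z_e=\delta/\ell$ on $E(C)\setminus f$, with $0<\delta<\tfrac12$. Each edge off $C$ contributes exactly $\tfrac12$ under either label, and each edge of $C$ contributes at least $\delta/\ell>0$ under either label.
\begin{itemize}[nosep]
\item A cycle $C'\ne C$ with at least three edges off $C$ has length at least $\tfrac32$.
\item A cycle $C'\ne C$ with exactly two edges off $C$ must also contain an edge of $C$, since a simple graph has no cycle of length $2$. Its length is therefore at least $1+\delta/\ell$.
\end{itemize}
So every odd labelling of every $C'\ne C$ has length strictly greater than $1$ at the point itself. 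The point is interior to $\QQ(G)$ by Lemma~\ref{lem:edge}, because $u=0$ and $z\in(0,1)^E$. Moreover, $\OC(\{f\})$ fails strictly, since the length of $\{f\}$ on $C$ is $\delta+(\ell-1)\delta/\ell<2\delta<1$. There are finitely many constraints, so by continuity a neighborhood of the point lies in $\PP_{\le L}(G)\setminus\PP(G)$. This is precisely the paper's argument, with $\varepsilon$ in place of $\delta/\ell$.
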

\begin{proof}
Let $C$ be a cycle with a chord $f$, which splits $C$ into two shorter cycles $C_1$ and $C_2$
through $f$, and let $A\subseteq E(C)$ be odd. Exactly one of $|A\cap E(C_1)|$ and
$|A\cap E(C_2)|$ is even; add $f$ to that part, which makes both parts odd. By
Lemma~\ref{lem:oc}, the sum of the two resulting odd-cycle inequalities is the inequality
$\OC(A)$ for $C$ plus the identity $(1-z_f)+z_f=1$. By induction on the length, every odd-cycle
inequality is implied by those of chordless cycles, and the ``if'' direction follows from
Theorem~\ref{thm:padberg}.

Conversely, let $C$ be a chordless cycle of length greater than $L$, and $f\in E(C)$. Every
other cycle $C'$ of $G$ has at least two edges outside $C$: if it had only one, that edge would
join two vertices of $C$ and be a chord. Let $p$ be the point with all $u_w=0$, with $z_f=1-\varepsilon$,
with $z_e=\varepsilon$ for $e\in E(C)\setminus f$, and with $z_e=\tfrac12$ for all other edges, where
$0<\varepsilon<1/|C|$. By Lemma~\ref{lem:edge}, $p$ lies in the interior of $\QQ(G)$, and by
Lemma~\ref{lem:oc} it violates $\OC(\{f\})$ for $C$. For a cycle $C'\ne C$ and an odd
$A'\subseteq E(C')$, the left-hand side of Lemma~\ref{lem:oc} receives $\tfrac12$ from each edge
outside $C$ and a positive amount from each edge of $C$, so it exceeds $1$ unless $C'$ has exactly
two edges outside $C$ and none in $C$, which is impossible. So $p$ satisfies every odd-cycle
inequality of every cycle of length at most $L$ strictly, and a neighborhood of $p$ lies in
$\PP_{\le L}(G)\setminus\PP(G)$.
\end{proof}

At the other extreme, the inequalities of short cycles can leave almost all of the gap open,
even for cacti.

\begin{theorem}\label{thm:worst}
For every $L\ge3$,
\[
\inf_G\frac{\vol\PP(G)}{\vol\PP_{\le L}(G)}=0,
\]
where $G$ ranges over connected series-parallel graphs, and the infimum is approached
exponentially fast in $d$. More precisely, for every $\ell>L$, the flower $F_k$ of $k$ copies of
$C_\ell$ has $d=1+k(2\ell-1)$, satisfies $\PP_{\le L}(F_k)=\QQ(F_k)$, and
\[
\frac{\rho_\ell^k}{4k(2\ell-1)}\le\frac{\vol\PP(F_k)}{\vol\PP_{\le L}(F_k)}\le4k(2\ell-1)\,\rho_\ell^k\,,
\]
where $\rho_\ell=1-\binom{2\ell-2}{\ell-1}/A_{2\ell-1}$\,.
For the triangle inequalities and $\ell=4$, the ratio is at most $28k\,(63/68)^k$, that is,
$(0.98915\ldots)^{d+o(d)}$.
\end{theorem}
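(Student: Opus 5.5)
The plan is to apply Theorem~\ref{thm:bigflower} to $B=C_\ell$ and to identify $\PP_{\le L}(F_k)$ with $\QQ(F_k)$. I will carry out the following steps in order.

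First, the structural facts. The flower $F_k$ consists of $k$ copies of $C_\ell$ sharing one vertex, so it has $1+k(\ell-1)$ vertices and $k\ell$ edges, which gives $d=1+k(2\ell-1)$. It is a cactus, hence series-parallel. Every cycle of $F_k$ is one of the $k$ petals, because a cycle through the shared vertex cannot pass between petals. So every cycle has length $\ell>L$, there are no odd-cycle inequalities of length at most $L$, and $\PP_{\le L}(F_k)=\QQ(F_k)$. The quotient in the statement is therefore $\mathrm{ratio}(F_k)$.

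Second, the two-sided bound. Apply Theorem~\ref{thm:bigflower} with $B=C_\ell$ and $v$ any vertex. Then $d_B=2\ell$, so $4k(d_B-1)=4k(2\ell-1)$. Proposition~\ref{prop:rhocenter} gives $\rho_{C_\ell,v}(0)=1-\binom{2\ell-2}{\ell-1}/A_{2\ell-1}=\rho_\ell$, which yields the displayed inequalities. Since $\rho_\ell<1$ (Lemma~\ref{lem:unimodal}) and $k=(d-1)/(2\ell-1)$, the upper bound is $O(d)\,\rho_\ell^{d/(2\ell-1)}$. Taking $\ell=L+1$ fixed and letting $k\to\infty$ shows that the infimum is $0$ and is approached exponentially fast in $d$.

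Third, the numerical case $L=3$, $\ell=4$. Here $\binom63=20$ and $A_7=272$, so $\rho_4=1-20/272=63/68$ and $4k(2\ell-1)=28k$. With $d=1+7k$ we get $k=d/7+O(1)$. Hence $28k(63/68)^k=\bigl((63/68)^{1/7}\bigr)^{d+o(d)}$, the polynomial factor being absorbed into the $o(d)$. The constant $(63/68)^{1/7}\approx0.98915$ then needs only to be checked numerically.

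There is no real obstacle. The only points requiring care are confirming that the petals are the only cycles, so that $\PP_{\le L}$ coincides with $\QQ$, and making the exponent bookkeeping in $d$ consistent with the bound as stated.
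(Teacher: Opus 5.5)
Your proposal is correct and matches the paper's proof: since every cycle of $F_k$ is a petal of length $\ell>L$, you get $\PP_{\le L}(F_k)=\QQ(F_k)$, and you then apply Theorem~\ref{thm:bigflower} with $d_{C_\ell}=2\ell$ together with Proposition~\ref{prop:rhocenter}. Your checks of $d=1+k(2\ell-1)$, of $\rho_4=63/68$, and of the conversion to $(0.98915\ldots)^{d+o(d)}$ spell out steps the paper leaves implicit.
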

\begin{proof}
Every cycle of $F_k$ is one of the copies of $C_\ell$\,, of length $\ell>L$, so $\PP_{\le L}(F_k)$
has no odd-cycle inequalities beyond $\QQ(F_k)$. The bounds follow from
Theorem~\ref{thm:bigflower}, with $d_{C_\ell}=2\ell$, and Proposition~\ref{prop:rhocenter}.
\end{proof}

So, in the worst case, the triangle inequalities close no fixed fraction of the gap between
$\QQ(G)$ and $\PP(G)$, and neither do the odd-cycle inequalities of cycles up to any fixed
length. By Proposition~\ref{prop:chordless}, they close all of it exactly when every chordless
cycle is short. In between, what matters is the chordless cycles that are too long to be
separated. Omitting any one of them loses little: by Theorem~\ref{thm:cap}, a cycle of length $\ell$ never cuts
off more than $2^{\ell-1}/\ell!$ of $\QQ(G)$, and in a large flower it removes the fraction
$\binom{2\ell-2}{\ell-1}/A_{2\ell-1}$\,, although, by Corollary~\ref{cor:bigflower}, that is about
$\sqrt{\ell/\pi}$ times what the same cycle removes on its own. Short cycles are therefore worth
far more per cycle, but the small fractions that many long cycles cut off compound, and no bound in terms of $L$
alone survives.

% ---------------------------------------------------------------------------
\section{Concluding remarks}\label{sec:concl}

From Theorem~\ref{thm:main}, we answer the question of~\cite[Section~8]{LeeSkipper20}, which
Lee, Skipper, and Speakman restated as~\cite[Problem~20, ``Lobster Roll'']{LSS18}. Series-parallel graphs are the
natural limit of this method: if $G$ has a $K_4$ minor, then $\PP(G)$ has facets beyond the
odd-cycle inequalities, and the parity distances no longer determine membership. The coordinates
behind the algorithm also bound the volume that odd-cycle inequalities can cut off: in every
graph, a cycle of length $\ell$ cuts off at most $2^{\ell-1}/\ell!$ of $\QQ(G)$
(Theorem~\ref{thm:cap}), yet many long cycles together can leave $\PP(G)$ an exponentially small
part of the relaxation defined by short cycles (Theorem~\ref{thm:worst}). We would like to highlight
two open questions:
\begin{itemize}[nosep]
\item deciding whether the approach extends to graphs of bounded treewidth, for relaxations of
$\PP(G)$ defined by odd-cycle inequalities alone;
\item deciding whether $\vol\mathrm{Cut}(G)$ can be computed in polynomial time for every graph $G$
without a $K_5$ minor, for which $\mathrm{Cut}(G)$ is still described by the inequalities of
Section~\ref{sec:cutG}~\cite{BarahonaMahjoub86}, a question raised in~\cite[Section~7]{ADD26};
Theorem~\ref{thm:cutG} settles the series-parallel case.
\end{itemize}

\section*{Acknowledgment}
This work was supported in part by the Office of Naval Research under grant N00014-24-1-2694. The
author thanks David Avis and Antoine Deza for rekindling his interest in this topic.

\medskip
\noindent {\bf Declaration of generative AI and AI-assisted technologies in the manuscript preparation process.}
During the preparation of this work, the author used Claude to assist in developing and checking
the work. The author takes full responsibility for the content of the article.

\bibliographystyle{alpha}
\bibliography{refs}

@article{BarahonaMahjoub86,
  author  = {Francisco Barahona and Ali Ridha Mahjoub},
  title   = {On the cut polytope},
  journal = {Mathematical Programming},
  volume  = {36},
  number  = {2},
  pages   = {157--173},
  year    = {1986},
  note    = {\url{https://doi.org/10.1007/BF02592023}}
}

@incollection{deBoor76,
  author    = {Carl de Boor},
  title     = {Splines as linear combinations of {B}-splines: a survey},
  booktitle = {Approximation Theory {II}},
  editor    = {G. G. Lorentz and C. K. Chui and L. L. Schumaker},
  publisher = {Academic Press},
  address   = {New York},
  pages     = {1--47},
  year      = {1976}
}

@article{DeSimone90,
  author  = {De Simone, Caterina},
  title   = {The cut polytope and the {B}oolean quadric polytope},
  journal = {Discrete Mathematics},
  volume  = {79},
  number  = {1},
  pages   = {71--75},
  year    = {1990},
  note    = {\url{https://doi.org/10.1016/0012-365X(90)90056-N}}
}

@book{DezaLaurent97,
  author    = {Michel Marie Deza and Monique Laurent},
  title     = {Geometry of Cuts and Metrics},
  series    = {Algorithms and Combinatorics},
  volume    = {15},
  publisher = {Springer},
  address   = {Berlin},
  year      = {1997},
  note      = {\url{https://doi.org/10.1007/978-3-642-04295-9}}
}

@article{DittmerPak20,
  author  = {Samuel Dittmer and Igor Pak},
  title   = {Counting linear extensions of restricted posets},
  journal = {Electronic Journal of Combinatorics},
  volume  = {27},
  number  = {4},
  pages   = {Paper 4.48},
  year    = {2020},
  note    = {\url{https://doi.org/10.37236/8552}}
}

@article{Hanson72,
  author  = {Denis Hanson},
  title   = {On the product of the primes},
  journal = {Canadian Mathematical Bulletin},
  volume  = {15},
  number  = {1},
  pages   = {33--37},
  year    = {1972},
  note    = {\url{https://doi.org/10.4153/CMB-1972-007-7}}
}

@inproceedings{KHNK16,
  author    = {Kustaa Kangas and Teemu Hankala and Teppo Niinim{\"a}ki and Mikko Koivisto},
  title     = {Counting linear extensions of sparse posets},
  booktitle = {Proceedings of the Twenty-Fifth International Joint Conference on Artificial Intelligence ({IJCAI}~2016)},
  pages     = {603--609},
  year      = {2016},
  note      = {\url{https://www.ijcai.org/Abstract/16/092}}
}

@article{KLS97,
  author  = {Chun-Wa Ko and Jon Lee and Einar Steingr{\'{\i}}msson},
  title   = {The volume of relaxed {B}oolean-quadric and cut polytopes},
  journal = {Discrete Mathematics},
  volume  = {163},
  number  = {1--3},
  pages   = {293--298},
  year    = {1997},
  note    = {\url{https://doi.org/10.1016/0012-365X(95)00343-U}}
}

@article{LeeMorris94,
  author  = {Jon Lee and Morris, Jr., Walter D.},
  title   = {Geometric comparison of combinatorial polytopes},
  journal = {Discrete Applied Mathematics},
  volume  = {55},
  number  = {2},
  pages   = {163--182},
  year    = {1994},
  note    = {\url{https://doi.org/10.1016/0166-218X(94)90006-X}}
}

@article{LeeSkipper20,
  author  = {Jon Lee and Daphne Skipper},
  title   = {Volume computation for sparse {B}oolean quadric relaxations},
  journal = {Discrete Applied Mathematics},
  volume  = {275},
  pages   = {79--94},
  year    = {2020},
  note    = {\url{https://doi.org/10.1016/j.dam.2018.10.038}}
}

@article{LSS18,
  author  = {Jon Lee and Daphne Skipper and Emily Speakman},
  title   = {Algorithmic and modeling insights via volumetric comparison of polyhedral relaxations},
  journal = {Mathematical Programming},
  volume  = {170},
  number  = {1},
  pages   = {121--140},
  year    = {2018},
  note    = {\url{https://doi.org/10.1007/s10107-018-1272-6}}
}

@article{Micchelli80,
  author  = {Charles A. Micchelli},
  title   = {A constructive approach to {K}ergin interpolation in {$R^k$}: multivariate {B}-splines and {L}agrange interpolation},
  journal = {Rocky Mountain Journal of Mathematics},
  volume  = {10},
  number  = {3},
  pages   = {485--497},
  year    = {1980},
  note    = {\url{https://doi.org/10.1216/RMJ-1980-10-3-485}}
}

@article{Padberg89,
  author  = {Manfred Padberg},
  title   = {The {B}oolean quadric polytope: some characteristics, facets and relatives},
  journal = {Mathematical Programming},
  volume  = {45},
  number  = {1},
  pages   = {139--172},
  year    = {1989},
  note    = {\url{https://doi.org/10.1007/BF01589101}}
}

@article{Stanley86,
  author  = {Richard P. Stanley},
  title   = {Two poset polytopes},
  journal = {Discrete \& Computational Geometry},
  volume  = {1},
  number  = {1},
  pages   = {9--23},
  year    = {1986},
  note    = {\url{https://doi.org/10.1007/BF02187680}}
}

@article{VTL82,
  author  = {Jacobo Valdes and Robert E. Tarjan and Eugene L. Lawler},
  title   = {The recognition of series parallel digraphs},
  journal = {{SIAM} Journal on Computing},
  volume  = {11},
  number  = {2},
  pages   = {298--313},
  year    = {1982},
  note    = {\url{https://doi.org/10.1137/0211023}}
}

@inproceedings{EGKO16,
  author    = {Eduard Eiben and Robert Ganian and Kustaa Kangas and Sebastian Ordyniak},
  title     = {Counting linear extensions: parameterizations by treewidth},
  booktitle = {24th Annual European Symposium on Algorithms ({ESA} 2016)},
  series    = {{LIPIcs}},
  volume    = {57},
  pages     = {39:1--39:18},
  year      = {2016},
  note      = {\url{https://doi.org/10.4230/LIPIcs.ESA.2016.39}}
}

@incollection{Mohring89,
  author    = {Rolf H. M{\"o}hring},
  title     = {Computationally tractable classes of ordered sets},
  booktitle = {Algorithms and Order},
  editor    = {I. Rival},
  publisher = {Kluwer Academic Publishers},
  address   = {Dordrecht},
  pages     = {105--193},
  year      = {1989},
  note      = {\url{https://doi.org/10.1007/978-94-009-2639-4_4}}
}

@book{HLP52,
  author    = {Godfrey Harold Hardy and John Edensor Littlewood and George P{\'o}lya},
  title     = {Inequalities},
  edition   = {Second},
  publisher = {Cambridge University Press},
  year      = {1952}
}

@article{Gardner02,
  author  = {Richard J. Gardner},
  title   = {The {B}runn-{M}inkowski inequality},
  journal = {Bulletin of the American Mathematical Society},
  volume  = {39},
  number  = {3},
  pages   = {355--405},
  year    = {2002},
  note    = {\url{https://doi.org/10.1090/S0273-0979-02-00941-2}}
}

@incollection{Stanley10,
  author    = {Richard P. Stanley},
  title     = {A survey of alternating permutations},
  booktitle = {Combinatorics and Graphs},
  series    = {Contemporary Mathematics},
  volume    = {531},
  pages     = {165--196},
  publisher = {American Mathematical Society},
  year      = {2010},
  note      = {\url{https://doi.org/10.1090/conm/531/10466}}
}

@book{RieszNagy90,
  author    = {Frigyes Riesz and B{\'e}la Sz.-Nagy},
  title     = {Functional Analysis},
  publisher = {Dover Publications},
  year      = {1990},
  note      = {Reprint of the 1955 English translation}
}

@misc{ADD26,
  author = {David Avis and Luc Devroye and Antoine Deza},
  title  = {On the volumes of the elliptope, metric polytope, and cut polytope},
  year   = {2026},
  note   = {arXiv:2604.16735, \url{https://arxiv.org/abs/2604.16735}}
}

\end{document}